\documentclass{scrartcl}

\usepackage[utf8]{inputenc}
\usepackage[T1]{fontenc}
\usepackage[english]{babel}
\usepackage{authblk}

\usepackage[dvipsnames]{xcolor}
\usepackage{graphicx,graphics}
\usepackage[font=sf, labelfont={sf,bf}, margin=0.1cm]{caption}
\usepackage{subfig}
\usepackage{tikz}
\usetikzlibrary{decorations.pathmorphing}

\usepackage{footnote}
\usepackage[a4paper,vmargin={3.5cm,3.5cm},hmargin={2cm,2cm}]{geometry}

\usepackage{xargs}
\usepackage{comment}
\usepackage{enumerate}
\usepackage{enumitem}
 
\usepackage[colorinlistoftodos,prependcaption]{todonotes}
\newcommandx{\rajouter}[2][1=]{\todo[inline,linecolor=red,backgroundcolor=red!25,bordercolor=red,#1]{#2}}
\newcommandx{\thiswillnotshow}[2][1=]{\todo[disable,#1]{#2}} 
\tikzset{notestyle/.append style={align=center}}
\usepackage[framemethod=TikZ]{mdframed}

\global\mdfdefinestyle{exampledefault}{%
linecolor=lightgray,linewidth=1pt,%
leftmargin=0.1cm,rightmargin=0.1cm,
}

\newcommand{\gr}[1]{{\color{gray} #1}}

\usepackage[normalem]{ulem}

\usepackage[backend=bibtex,style=alphabetic,giveninits=true,maxnames=99,maxalphanames=99,autopunct=false]{biblatex}
\usepackage{hyperref}
\hypersetup{colorlinks=true, citecolor=ForestGreen, breaklinks, urlcolor=black, linkcolor=Black}
\usepackage{amsmath}
\usepackage{cleveref}
\AtEveryCite{\color{ForestGreen}}

\usepackage{amsfonts,amssymb,amsthm,mathrsfs, cancel, amsmath}
\usepackage{wasysym}
\usepackage{esint} 
\usepackage{upgreek}
\usepackage{dsfont}
\usepackage{braket}
\usepackage{mathtools}

\newcommand\brak[1]{\big \langle #1 \big\rangle}

\newcommand{\N}{\mathbb{N}}

\newcommand{\C}{\mathbb{C}}

\newcommand{\Z}{\mathbb{Z}}
\newcommand{\Q}{\mathbb{Q}}
\newcommand{\QQ}{\overline{\mathbb{Q}}}
\newcommand{\K}{\mathbb{K}}

\renewcommand{\P}{\mathbb{P}}
\newcommand{\dinf}{\mathrm{d}}

\renewcommand{\epsilon}{\varepsilon}
\newcommand\wt[1]{\widetilde{#1}}
\newcommand\wh[1]{\widehat{#1}}
\newcommand{\notdiv}{\hspace{-0.25em}\not\hspace{0.25em}\mid}
\newcommand\ul[1]{\underline{#1}}
\newcommand{\w}{\textnormal{w}}

\makeatletter
\newif\iflibus@sansmath
\makeatother
\DeclareFontEncoding{LS1}{}{}
\DeclareFontSubstitution{LS1}{libertinust1math}{m}{n}
\DeclareSymbolFont{LettersLibertinus}{LS1}{libertinust1math}{m}{it}
\DeclareMathSymbol{\tau}{\mathord}{LettersLibertinus}{"1C}

\DeclareFontFamily{OT1}{pzc}{}
\DeclareFontShape{OT1}{pzc}{m}{it}{<-> s * [1.10] pzcmi7t}{}
\DeclareMathAlphabet{\mathpzc}{OT1}{pzc}{m}{it}

\newtheorem{thm}{Theorem}[section]
\newtheorem{prop}[thm]{Proposition}
\newtheorem{lem}[thm]{Lemma}
\newtheorem{cor}[thm]{Corollary}
\newtheorem{defn}{Definition}[section]
\newtheorem{rem}{Remark}[section]

\begin{document}

\title{Enumeration of general planar hypermaps with an alternating boundary}
\author[1,2]{Valentin Baillard}
\author[3]{Ariane Carrance}
\author[1,4]{Bertrand Eynard}

\affil[1]{\normalsize Université Paris-Saclay, CNRS, CEA, Institut de physique théorique, 91191, Gif-sur-Yvette, France.}
\affil[2]{Institut de Mathématiques d'Orsay, Université Paris-Saclay, 91400, Orsay, France.}
\affil[3]{Fakultät für Mathematik, Universität Wien, 1090 Wien, Österreich.}
\affil[4]{CRM, Centre de recherches math\'ematiques  de Montr\'eal, Montr\'eal, QC, Canada.}

\date{\today}

\maketitle

\begin{abstract}
In this paper, we extend the enumerative study of planar hypermaps with an \emph{alternating} boundary introduced in~\cite{bouttier-carrance}. In that article, an explicit rational parametrization was obtained for the associated generating function in the case of $m$-constellations, using a variant of the kernel method. We develop here a new strategy to obtain an algebraic equation in the general case, which includes maps decorated by the Ising model, through a classical many-to-one correspondence. One of the main steps of our strategy is the simultaneous elimination of two {catalytic} variables. We then apply this strategy to the case of Ising quadrangulations, where we obtain an explicit rational parametrization. As a consequence, we show that some notable properties of the constellations case are no longer satisfied in general.
\end{abstract}

\tableofcontents

\section{Introduction}

\subsection{Context}

The enumerative theory of planar maps has been an active topic in combinatorics since its inception by Tutte in the sixties, through a series of seminal papers, including~\cite{tutte-trig,tutte-slicings,tutte-gene, tutte68}. An account of some of its more recent developments may be found in the review by Schaeffer~\cite{schaeffer-handbook}. Many such developments were motivated by connections with other fields: theoretical physics, algebraic and enumerative geometry, computer science, probability theory... 

In the physics literature, the problem of enumerating planar maps emerged in connection with matrix models: this was initiated by t’Hooft~\cite{thooft} and further developed in particular by Brezin, Itzykson, Parisi and Zuber~\cite{bipz}. A notable motivation comes from two-dimensional quantum gravity, see~\cite{dfgzj} for a detailed review. In the language of physics, usual maps provide either a model of pure gravity without  matter, or with some very restricted type of matter. It is therefore natural to consider more generally maps endowed with a model of statistical mechanics. In particular, we consider here hypermaps, which are maps whose faces are bicolored (say, in black and white) in such a way that no two faces of the same color are adjacent. They are related through a classical many-to-one correspondence to maps endowed with an Ising model on their faces (see Section~\ref{sec:corresp-ising}). The enumeration of hypermaps was thus first addressed in the physics literature through the two-matrix model~\cite{iz,mehta,douglas,staudacher,dkk}, leading to the identification of the critical exponents of the Ising model on random maps in~\cite{kazakov,boulatov-kazakov}. While it is fully rigorous to equate the correlation functions of \emph{formal} matrix models with generating functions of maps (see for instance~\cite{eynard-formal-matrix}), the connection with \emph{convergent} matrix models is much more subtle (see~\cite{guionnet-maurel-segala}).

To enumerate maps, the standard approach consists in studying the effect of the removal of an edge. Nowadays, this operation is often called ``peeling'', see for instance~\cite{curien-stflour}. In this paper, we also use the term splitting, that comes from matrix models, see Section~\ref{sec:tutte as splitting}. In order to turn the peeling into equations, one needs to keep track of one or more auxiliary parameters, called \emph{catalytic variables}~\cite{BoJe06}, and typically corresponding to boundary lengths. In the context of hypermaps, a further complication occurs, since one needs to keep track of boundary conditions: these encode the colors of the faces incident to the boundary. The most tractable boundary condition is the so-called Dobrushin boundary condition, which consists in having the boundary made of two parts: one part is incident to white faces, and the other to black faces. The Dobrushin boundary condition has the key property that it is invariant under peeling, provided that we always peel an edge at the white-black interface. The resulting equations are solved in~\cite{Eyn_2003,eynard} and in~\cite{chen-turunen}. Knowing how to treat Dobrushin boundary conditions, one may then consider ``mixed'' boundary conditions~\cite{eyn_2005} where there is a prescribed number of white-black interfaces. However, this approach seems to become intractable when the number of interfaces gets large. An earlier work by Bouttier and the second author~\cite{bouttier-carrance} introduced the \emph{alternating} boundary condition, which corresponds to the extreme situation where the number of interfaces is maximal, that is to say when white and black faces alternate along the boundary. 

Note that~\cite{eyn_2005,eyn_2008,eynard} also treats the case of mixed boundary conditions in higher genera and with more than one boundary, through the formalism of \emph{topological recursion}. While the present paper only deals with the disk topology, we will touch upon the notion of \emph{spectral curve}, as constructed in topological recursion, when discussing properties our generating functions, see Section~\ref{subsec:constell}. It is natural to conjecture that, like many other map enumeration problems, hypermaps with an alternating boundary also fall under the general framework of topological recursion. We discuss this in more detail in Sections~\ref{subsec:constell} and~\ref{sec:ccl}.

Beyond the interesting combinatorial phenomena that we uncover in the present paper, there are some external motivations to study the alternating boundary condition. First, the initial motivation of~\cite{bouttier-carrance} came from the probabilistic study of bicolored triangulations. Indeed, enumeration asymptotics of the alternating boundary condition obtained in~\cite{bouttier-carrance} were a crucial ingredient in a related work by the second author~\cite{carrance-trig-eul}, that proved that large random uniform bicolored triangulations converge to the \emph{Brownian sphere}. In the case of the Ising model, the alternating boundary condition is also natural to consider when studying the \emph{antiferromagnetic} regime. Indeed, for bipartite lattices, one can transform an antiferromagnetic model into a ferromagnetic one, by flipping the spins of odd sites (see for instance~\cite[Section 3.10.5]{friedli-velenik-book}), and this transforms the alternating boundary condition into the classical monochromatic one. Moreover, the Ising model on random maps is related to \emph{Calogero-Moser spin chains} by a bulk/boundary correspondence, see for instance~\cite[Section 6.1]{ekr}. Through this correspondence, the partition function of the alternating condition is related to \emph{disorder operators} of the spin chain. Thus, combinatorial results on the alternating condition open up new directions in the study of these statistical mechanics models.

Before presenting our main results, let us mention that there exist many bijective approaches to the enumeration of hypermaps. While most of them focus on the monochromatic boundary~\cite{bm-s-ising, bdg, bdg-hard-particles, bouttier-fusy-guitter, bernardi-fusy, albenque-menard-tokka}, the Dobrushin boundary condition was recently recovered using the bijective method of the slice decomposition~\cite{albenque-bouttier-slices}. We will address the adaptation of the slice decomposition to the alternating boundary in a future work~\cite{slices-alt}.

\subsection{Main results}

Recall that a \emph{planar map} is a connected multigraph drawn on the sphere without edge crossings, and considered up to continuous deformation. It consists of vertices, edges, faces, and corners. A \emph{boundary} is a distinguished face, which we often choose as the infinite face when drawing the map on the plane. It is assumed to be \emph{rooted}, that is to say there is a distinguished corner along the boundary, depicted as gray arrow in our illustrations. The other faces are called \emph{inner faces}. A \emph{hypermap with a boundary} is a planar map with a boundary, where every inner face is colored either black or white, in a such a way that adjacent inner faces have different colors. Note that the boundary does not have a specified color: it may be incident to faces of both colors. Let $\w$ be a word on the alphabet $\{\bullet, \circ\}$, and $\ell$ be the length of $\w$. A hypermap with boundary condition $\w$ is a hypermap with a boundary of length $\ell$ such that, when walking in clockwise direction around the map (starting at the root corner),
\begin{enumerate}
\item if the $i$-th visited edge $(i = 1, \dots , \ell)$ is incident to an inner face, then this face is white if $\w_i = \bullet$ and black if $\w_i = \circ$,
\item  if an edge is incident to the boundary on both sides (i.e. it is a \emph{boundary bridge}), and is then visited twice, say at steps i and j, then $\w_i \neq \w_j$. 
\end{enumerate}
In other words, $\w_i$ codes the color of the boundary face as seen from the $i$-th edge.

\begin{rem}
Looking at a given hypermap with a boundary, there is a possible ambiguity for the associated boundary condition if there are bridges (see Figure~\ref{fig:gene-and-mono-ex}), but this causes no issue for our generating functions, as they count hypermaps that \emph{admit} given boundary conditions.
\end{rem}

\begin{figure}[htp]
\centering
\includegraphics[height=4cm]{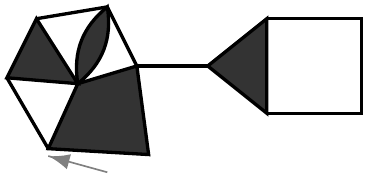} \hfill \includegraphics[height=4cm]{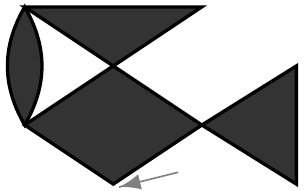}
\caption{Left, a hypermap that admits the boundary conditions $\circ\bullet\circ\bullet\bullet\bullet\circ\bullet\bullet\bullet\circ\circ\circ$ and $\circ\bullet\circ\bullet\bullet\circ\circ\bullet\bullet\bullet\circ\bullet\circ$. Right, a hypermap with a monochromatic boundary.}
\label{fig:gene-and-mono-ex} 
\end{figure}

A \emph{hypermap with a white (resp. black) monochromatic boundary} is a hypermap with boundary condition $\circ \circ \dots \circ$ (resp. $\bullet\bullet \dots \bullet$). These are just hypermaps in the sense of~\cite{bernardi-fusy}. A \emph{hypermap with an alternating boundary} is a hypermap with boundary condition $\circ\bullet\circ\bullet\dots\circ\bullet$. Note that a hypermap with a monochromatic boundary cannot have a boundary bridge, but may have pinch points (see Figure~\ref{fig:gene-and-mono-ex}). For a hypermap with an alternating boundary, bridges are possible, but they must necessarily have black neighboring boundary edges on one side, and white ones on the other (see Figure~\ref{fig:alt-non-alt-ex}).

Throughout this paper, we consider generating functions for hypermaps whose white (resp. black) face degrees are upper bounded by some integer $d$ (resp. $\wt{d}$). We will keep track of the number of faces of each color and degree with variables $t_2, \dots, t_d,\wt{t}_2,\dots,\wt{t}_{\wt d}$. The following polynomials in $\Q[t_2,\dots,t_d][x]$ (resp. $\Q[\wt{t}_2,\dots,\wt{t}_{\wt{d}}][y]$):
\begin{equation}
V(x) = - \sum_{2\leq i \leq d} \frac{t_i}{i}x^i, \qquad \wt{V}(y) = - \sum_{2\leq i \leq \wt{d}} \frac{\wt{t}_i}{i}y^i
\end{equation}
will be useful quantities, that we call \emph{potentials} by analogy with matrix models.

\begin{figure}[htp]
\centering
\includegraphics[height=4cm]{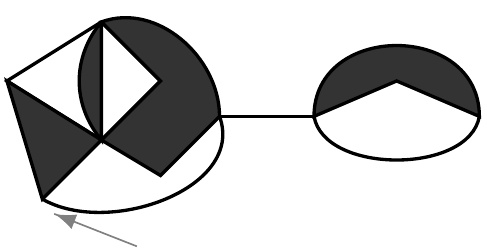} \hfill \includegraphics[height=4cm]{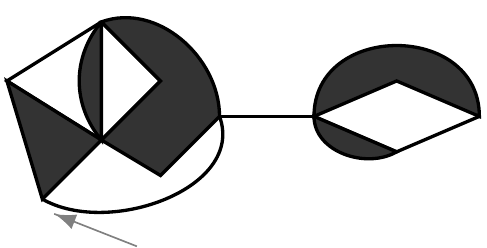}
\caption{Left, a hypermap with an alternating boundary. Right, a hypermap whose boundary is not alternating.}
\label{fig:alt-non-alt-ex} 
\end{figure}
 
Let $F_r$ denote the generating function of hypermaps with an alternating boundary of length $2r$, counted with a weight $t$ per vertex, weight $c^{-1}$ per edge, $t_i$ per white inner face of degree $i$, and $\wt{t}_j$ per black inner face of degree $j$. By convention, $F_0=1$ (corresponding to the trivial vertex map). Our main series of interest is
\begin{equation}
\wh{f}(\omega)=\wh{f}(t,c,t_2,\dots,t_d,\wt{t}_2,\dots,\wt{t}_{\wt{d}}; \omega):=\sum_{r\geq 0}\frac{F_r}{\omega^{r+1}}-c.
\end{equation}
The extra $-c$ might seem arbitrary, but it will help simplify some formulae in what follows. We define similarly $W_p$ as the generating function of hypermaps with a white monochromatic boundary of length $p$, with the same weights, and
\begin{equation}
W(x)=W(t,c,t_2,\dots,t_d,\wt{t}_2,\dots,\wt{t}_{\wt{d}};x):=\sum_{r\geq 0}\frac{W_p}{\omega^{p+1}}, \quad Y(x):=\frac{1}{c}(W(x)-V'(x))
\end{equation}

We are now ready to state our main results.

\begin{thm}
For any $d, \wt d\geq2$, $\wh{f}$ is algebraic over $\QQ((t))(c,t_1,\dots,t_d,\wt{t}_1,\dots,\wt{t}_{\wt d},\omega)$. Moreover, we have an explicit strategy to obtain its annihilating polynomial, that does not rely on the kernel method or its generalizations, see Theorem~\ref{thm:algebraicity} for a more precise statement.
\end{thm}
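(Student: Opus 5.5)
We obtain $\wh f$ from a system of Tutte-type (peeling, or loop) equations for hypermaps with mixed boundary conditions. We then eliminate the catalytic variables algebraically, using the known algebraicity of the monochromatic and Dobrushin generating functions, and never invoke the kernel method.

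\textbf{Step 1: Monochromatic and Dobrushin data.} We first recall the classical results for the white monochromatic series $W(x)$ and for the Dobrushin series. Their two-catalytic-variable generating function $H(x,y)$ satisfies a linear equation of the form
\[ (x-\wt V'(y)/c\ldots)(y-Y(x))H(x,y)=\text{polynomial corrections}. \]
From it one gets the spectral curve $E(x,y)=0$, which is algebraic and has a rational parametrization $x(z),y(z)$ in the genus-zero case. We take this as known (Eynard, Chen--Turunen). In particular, $W(x)$ and $Y(x)$ are algebraic, and finitely many unknown coefficients $W_1,\dots,W_{\wt d-1}$ and so on are algebraic in the weights.

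\textbf{Step 2: Peeling equations at an alternating boundary.} We introduce an auxiliary family: hypermaps with boundary $\circ\bullet\cdots\circ\bullet$ followed by one monochromatic white segment and one monochromatic black segment. This family is encoded by a generating function $G(\omega;x,y)$ with catalytic variables $x,y$ marking the lengths of the two monochromatic parts. Peeling the edge at an interface gives closed equations. For $\omega$, peeling an alternating edge either merges the boundary with an inner face, which creates a monochromatic run, or splits the map into two alternating pieces. This gives a quadratic term in $\wh f$ together with terms involving $G$. Peeling at the Dobrushin interfaces of $G$ gives a linear equation in $G$ whose kernel is essentially $(x-X(y))(y-Y(x))$ plus a term coupling to $\omega$, where $\omega$ is naturally identified with the product $xy$.

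\textbf{Step 3: Simultaneous elimination of $x$ and $y$.} Instead of finding roots of the kernel in one variable, we substitute a point of the spectral curve, $x=x(z)$ and $y=y(z)$, so that the Dobrushin part vanishes identically. We then impose $\omega=x(z)y(z)$, or whatever combination appears. The equation then becomes a polynomial relation among $\wh f(\omega)$, the known algebraic functions of $z$, and finitely many unknown one-variable series. The relation $\omega=x(z)y(z)$ is polynomial in $z$ and $\omega$. Taking the resultant in $z$ over the branches eliminates $z$. This gives $P(\wh f(\omega),\omega)=0$, with coefficients algebraic over $\QQ((t))(c,t_i,\wt t_j)$. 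The remaining unknown constants are fixed by the finiteness conditions of the power series, which give finitely many polynomial equations. Algebraicity then follows because the algebraic closure of $\QQ((t))(\dots)$, adjoined with $\omega$, is closed under these operations.

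\textbf{Main obstacle.} The hardest step is the simultaneous elimination in Step 3. We must check that the substitution is legitimate as formal power series, meaning the branches $x(z),y(z)$ with $xy=\omega$ converge appropriately in $\QQ((t))((1/\omega))$. We must also check that the resulting resultant is not identically zero, that is, it genuinely constrains $\wh f$. To handle this, we would first verify the construction on $m$-constellations, where the answer of~\cite{bouttier-carrance} is known, and then argue that nondegeneracy holds generically in the weights.
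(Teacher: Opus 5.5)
Your auxiliary family (a white run with catalytic $x$, an alternating run, a black run with catalytic $y$) and the idea of using the spectral curve to kill the two-variable kernel are the right ingredients. However, there is a genuine gap in Step~3, and it is exactly where the paper's key idea sits.

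\textbf{The functional equations have a different shape.} One peeling equation has kernel $c(y-Y(x))$ alone. Substituting $y=Y(x)$ therefore kills it for every $x$ and every $\omega$, with no condition linking them. The other, for $M(x,\omega)=\brak{\frac{1}{x-A}\frac{1}{\omega-BA}}$, reads $M(\wh f\omega+cxY)=-xR+\wh f(W-f_{01})$. Its kernel vanishes on $\omega\wh f(\omega)=-cxY(x)$, not on $\omega=xy$.

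\textbf{The missing idea.} Eliminating $M$ between the two equations gives $Q(x,Y(x))=0$. Here $Q$ is a polynomial in $x,y$ whose coefficients are polynomials in $\omega$ and in finitely many unknown series $f_{ij}(\omega)$, $0\le i\le d-2$, $0\le j\le\wt d-2$. The point is that $x$ is still free and independent of $\omega$. Since $E$ is irreducible and $Q$ has the same bidegree and leading coefficient, $Q=E$ identically. Comparing coefficients of $x^iy^j$ then gives $(d-1)(\wt d-1)$ polynomial equations in these unknowns. This is the simultaneous elimination of both catalytic variables.

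\textbf{Why your Step~3 does not close.} Tying $\omega$ to $z$ and taking a resultant throws that freedom away. Per branch you get a single relation between $\wh f$ and several unknown series in $\omega$. The ``finiteness conditions'' meant to supply the remaining equations are never specified, and the equations are never counted against the unknowns. Moreover, choosing $z$ as a series in $\omega$ that cancels a kernel, and then justifying the substitution, is precisely the kernel method, which the statement claims to avoid. Making that route rigorous would require counting the $d$ kernel roots and invoking the Bousquet-M\'elou--Jehanne theorems, as in Appendix~\ref{sec:appendix-alg-by-bmj}.

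\textbf{Nondegeneracy is also missing.} A polynomial system satisfied by the $f_{ij}$ only yields algebraicity if their actual values form an isolated solution. The paper proves this by showing the Jacobian determinant at $(f_{ij})$ is nonzero, tracking powers of $c$. Every $f_{ij}$ with $(i,j)\neq(0,0)$ is $O(c^{-1})$, and $\wh f=-c+O(1)$. One specific term of the determinant, of order $c^{1-(d-2)(\wt d-2)}$ with coefficient $\pm t_d^{(d-2)(\wt d-1)}\wt t_{\wt d}^{(d-1)(\wt d-2)}$, dominates all the others.

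Your plan to check on $m$-constellations that ``the resultant is not identically zero'' addresses a different question. A nonzero polynomial in $(\wh f,\omega)$ alone would indeed suffice. But you cannot obtain one before the auxiliary series are eliminated, and that elimination is exactly what needs the nondegeneracy.
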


Note that it was mentioned in~\cite{bouttier-carrance} that the general case is also amenable to the kernel method. We give details on this in Appendix~\ref{sec:appendix-alg-by-bmj}, and compare it with our strategy in Section~\ref{sec: algebraicity-2-strats}. In particular, in the case of Ising quadrangulations, our strategy is much simpler than the kernel method, and leads to the following:

\begin{thm} For the specialization $t_2=\wt{t}_2, t_4=\wt{t}_4, t_k=\wt{t}_k=0$ if $k\neq 2, 4$, i.e. symmetric Ising quadrangulations, we have the following explicit rational parametrization of $(\omega,\wh{f}(\omega))$ in terms of a formal variable $h$:
\begin{equation}
\begin{aligned}
\omega_\textnormal{sym}(h) &= \frac{(\alpha_3 h +\gamma)^2\gamma^3h}{(\gamma h + \alpha_3)^2\alpha_3}\\
\widehat{f}_\textnormal{sym}(h) &=-c\frac{(\alpha_3 \gamma h^2 + h(\alpha_1^2 - 2\alpha_3\gamma) + \alpha_3 \gamma) (\gamma h +\alpha_3)^3}{(\alpha_3 h + \gamma) \gamma^4 (h - 1)^2h^2},
\end{aligned}
\label{eq:parametrization-for-Ising-quadrangulations-symmetric-f-omega}
\end{equation}
where $\gamma, \alpha_1, \alpha_3$ are algebraic functions in $c,t_2,t_4,t$ that are determined by 
\[
t_4  = \frac{c \alpha_3}{\gamma^3} , \quad t_2  = c\alpha_1\frac{\gamma - 3\alpha_3}{\gamma^2}, \quad t= c(\gamma^2-\alpha_1^2-3\alpha_3^2).
\]
\label{thm:ratio-param-Ising-quad-sym}
\end{thm}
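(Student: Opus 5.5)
Our plan is to specialize the general algebraicity strategy of Theorem~\ref{thm:algebraicity} to symmetric Ising quadrangulations, using the known rational parametrization of the monochromatic disk function. For $V(x)=-\frac{t_2}{2}x^2-\frac{t_4}{4}x^4$ and $\wt V=V$, the classical two-matrix model solution (as in~\cite{Eyn_2003,eynard}) gives a spectral curve of the form
\[
x(z)=\gamma z+\frac{\alpha_1}{z}+\frac{\alpha_3}{z^3},\qquad y(z)=\gamma z^{-1}+\alpha_1 z+\alpha_3 z^{3},
\]
with $y(z)=x(1/z)$ by symmetry. Here $Y(x(z))=y(z)$ (up to the normalization by $c$ in the definition of $Y$), and the coefficients are fixed by the asymptotics $W(x)\sim t/x$ and $y\sim \wt V'(x)/c$-type matching at $z\to\infty$ and $z\to 0$. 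Imposing these conditions should give exactly the three relations $t_4=c\alpha_3/\gamma^3$, $t_2=c\alpha_1(\gamma-3\alpha_3)/\gamma^2$ and $t=c(\gamma^2-\alpha_1^2-3\alpha_3^2)$. We take this step as a routine consequence of the Dobrushin/monochromatic theory.

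Next we use the equations from the splitting (Tutte) analysis of the alternating boundary, which the paper derives before Theorem~\ref{thm:algebraicity}. These express $\wh f(\omega)$ together with an auxiliary two-catalytic-variable series (in $x$ and $y$) in terms of $W$, $\wt W$, $V'$ and $\wt V'$. The key point of the strategy is that the catalytic variables are eliminated simultaneously by substituting points of the spectral curve. Concretely, we expect a relation of the type $\omega = x(z_1)\,y(z_2)$, or $\omega$ equal to a product of boundary variables, together with a compatibility condition linking $z_1$ and $z_2$ that cancels the unknown bivariate series. Under this condition $\wh f$ becomes a rational function of the curve data.

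So the plan is to take the polynomial system produced by Theorem~\ref{thm:algebraicity}, plug in the parametrization, and solve the compatibility condition. We expect $z_2$ to be a rational function of $z_1$, e.g.\ a Möbius-type relation coming from the $z\mapsto 1/z$ symmetry. Setting $h$ to be a suitable rational function of $z_1$ (perhaps $h=z_1^2$ or a ratio of the form $(\gamma z^2+\ldots)$) should then produce the stated $\omega_{\rm sym}(h)$ and $\wh f_{\rm sym}(h)$.

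The main obstacle is this elimination: identifying the correct branch and factor of the resultant so that the algebraic curve in $(\omega,\wh f)$ is genus zero, and finding the uniformizing variable $h$. We would try the following in order.
\begin{enumerate}
\item Factor the eliminated polynomial using the symmetry $t_k=\wt t_k$.
\item Guess $h$ from the locations of the poles of $\omega$, namely $h=-\alpha_3/\gamma$ and $h=0,\infty$.
\item Verify the proposed parametrization by direct substitution into the annihilating polynomial.
\end{enumerate}
Finally, we would check that the branch is the correct one by expanding at small $t$. As $h\to0$, $\omega\to 0$ like $\gamma^5 h/\alpha_3^3$ and $\wh f\sim -c\,\alpha_3^4/(\gamma^5 h)$, so $\wh f\sim -c\cdot$ stuff$/\omega$ up to the normalization. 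This should match $F_0/\omega - c$, and $F_1=t^2/c$-type terms should also be checked.
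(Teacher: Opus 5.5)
There is a genuine gap at the central step. Your first step, the monochromatic parametrization with $y(z)=x(1/z)$ and the three relations for $t,t_2,t_4$, is fine. The trouble comes after. The system of Theorem~\ref{thm:algebraicity} contains no ``compatibility condition linking $z_1$ and $z_2$'': the catalytic variables are removed by taking coefficients of $x^iy^j$ in $Q-E$, which leaves polynomial equations in the $f_{ij}$ only. The mechanism you rely on is a relation like $\omega=x(z_1)y(z_2)$ with $z_2$ a M\"obius image of $z_1$, so that $\widehat f$ becomes rational in the spectral-curve data and $h$ is something like $z_1^2$. That is the kernel-method picture from $m$-constellations, and it does not apply here. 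The kernel cancelling $M$ in \eqref{eq:second-equation-in-M} is $\widehat f\omega+cxY(x)$, which involves $\widehat f$ itself. The substitution cancelling $\widetilde S_+$ in \eqref{eq:first-equation-in-M} is $y=Y(x)$, i.e.\ a single point $(x(z),y(z))$, not two related points. More decisively, Corollary~\ref{cor:non-co-rat} shows that for symmetric Ising quadrangulations the relation $\lambda_{\mathrm{sym}}(H)=-c\,x(z)y(z)$ defines a curve of positive genus. So $(\omega,\widehat f)$ is \emph{not} rational over $(x(z),y(z))$ compatibly with the kernel relation, and the route you expect to produce $h$ cannot work. A guess-and-verify proof would be legitimate, since the statement hands you the formula. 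But it needs (i) an annihilating polynomial for $(\omega,\widehat f)$ and (ii) a correct identification of the branch, and your plan supplies neither.

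For (i), use parity ($f_{ij}=0$ for $i+j$ odd) and $f_{11}=\omega f_{00}-t$. Then the coefficients of $x^2$, $y^2$ and $xy$ in $Q-E$ are linear in $f_{20},f_{02},f_{22}$. After eliminating these, the constant coefficient gives a cubic in $\widehat f$ of degree $7$ in $\omega$, with coefficients involving $D_{20},D_{02},D_{11},D_{22}$ obtained by Lagrange inversion from $x(z),y(z)$. The genus-$0$ structure becomes tractable only after passing to $\lambda=\omega\widehat f$ and $\zeta=\omega+\gamma^6/(\alpha_3^2\omega)$; the curve is invariant under $\omega\mapsto\gamma^6/(\alpha_3^2\omega)$ with $\lambda$ fixed. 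In the symmetric case the equation factors as $\alpha_3(2\gamma^3+\alpha_3\zeta)$ times a polynomial quadratic in $\zeta$. Its $\lambda$-discriminant has the one-cut form $F(\lambda)^2(\lambda-a)(\lambda-b)$, which gives a Zhukovsky parametrization $(\lambda(H),\zeta(H))$. Then $\zeta(\omega)=\zeta_{\mathrm{sym}}(H)$ is again quadratic and one-cut in $\omega$, and this produces $h$.

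For (ii), your check is at the wrong point and miscomputed. As $h\to0$ one has $\omega\sim\gamma^5h/\alpha_3^3\to0$ and $\widehat f\sim-c\alpha_3^4/(\gamma^4h^2)$, which blows up like $\omega^{-2}$; this is the image of the physical branch under $h\mapsto 1/h$. Also, $h=0$ is a zero of $\omega$, not a pole. The physical branch sits at $h\to\infty$, where $\omega=\alpha_3\gamma h+O(1)$ and $\widehat f=-c+t/\omega+O(\omega^{-2})$ with $t=c(\gamma^2-\alpha_1^2-3\alpha_3^2)$. Since the equation is cubic in $\widehat f$, you must match enough terms there to single out the correct root.
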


One remarkable feature of the case of $m$-constellations that was solved in~\cite{bouttier-carrance}, is that $(\omega,\wh{f})$ can be parametrized as rational functions of $(x,Y(x))$, that satisfy the \emph{kernel relation} $\omega\wh{f}(\omega)+cxY(x)=0$ (see the detailed discussion in Section~\ref{subsec:constell}). As a consequence of Theorem~\ref{thm:ratio-param-Ising-quad-sym}, we show that this no longer holds in the general case:

\begin{cor}
\label{cor:non-co-rat}
In general, the curves $(\omega,\widehat{f}(\omega))$ and $(x,Y(x))$ do not admit a joint rational parametrization that satisfies the kernel relation.
\end{cor}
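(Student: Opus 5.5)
It suffices to exhibit one specialization where a joint rational parametrization satisfying the kernel relation cannot exist. We take symmetric Ising quadrangulations and argue by genus. Suppose there were rational functions $\omega(z),\wh f(z),x(z),Y(z)$ of one uniformizing variable $z$ such that $(\omega(z),\wh f(z))$ parametrizes the curve of Theorem~\ref{thm:ratio-param-Ising-quad-sym}, $(x(z),Y(z))$ parametrizes the spectral curve of $(x,Y(x))$, and $\omega(z)\wh f(z)+c\,x(z)Y(z)=0$.

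The first step is to recall the rational parametrization of $(x,Y(x))$ for symmetric Ising quadrangulations, which is classical from the two-matrix model. It is of the form $x(z)=\gamma z+\sum_{k}\alpha_k z^{-k}$ and $Y(z)=\gamma z^{-1}+\sum_k\alpha_k z^{k}$, up to normalization by $c$. Here $\alpha_1,\alpha_3,\gamma$ are the same parameters that appear in Theorem~\ref{thm:ratio-param-Ising-quad-sym}, and by symmetry $x$ and $Y$ are exchanged by $z\mapsto 1/z$. Both curves are then rational, so $\omega(h)$ and $z$ live on $\mathbb{P}^1$. Any joint parametrization would give, via L\"uroth, a common rational parameter $u$ with $h=\phi(u)$ and $z=\psi(u)$ for rational maps $\phi,\psi$. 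The kernel relation then becomes an identity of rational functions in $u$:
\[
\omega_{\rm sym}(\phi(u))\,\wh f_{\rm sym}(\phi(u)) = -c\,x(\psi(u))\,Y(\psi(u)).
\]

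The second step is to compare degrees and pole/zero structures of both sides. The function $P(h):=\omega_{\rm sym}(h)\wh f_{\rm sym}(h)$ can be computed explicitly from the formulas in Theorem~\ref{thm:ratio-param-Ising-quad-sym}. Its factors $(\gamma h+\alpha_3)^3/(\gamma h+\alpha_3)^2$ and $(\alpha_3h+\gamma)^2/(\alpha_3h+\gamma)$ partially cancel, giving
\[
P(h)=-c\,\frac{(\alpha_3\gamma h^2+h(\alpha_1^2-2\alpha_3\gamma)+\alpha_3\gamma)(\gamma h+\alpha_3)(\alpha_3 h+\gamma)}{\alpha_3\gamma(h-1)^2h}.
\]
This has degree $4$, with a double pole at $h=1$ and simple poles at $h=0,\infty$. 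On the other side, $Q(z):=x(z)Y(z)$ has poles only at $z=0,\infty$, each of order $4$, since $x\sim\alpha_3 z^{-3}$ and $Y\sim\gamma z^{-1}$ near $0$. The identity $P\circ\phi=-c\,Q\circ\psi$ forces the two fibrations of $\mathbb{P}^1_u$ over the $P$-line and the $Q$-line to agree. In particular, the multisets of ramification over $\infty$ must be compatible. For generic $u$-preimages, $P$ has poles with multiplicities $\{2,1,1\}$ while $Q$ has $\{4,4\}$. Composition multiplies these by the ramification of $\phi$ and $\psi$, and the total degree must satisfy $4\deg\phi=8\deg\psi$.

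The main obstacle is making this monodromy/ramification comparison airtight, for instance by showing that $\{2,1,1\}\cdot e$ can never equal $\{4,4\}\cdot e'$ after pulling back by arbitrary rational $\phi,\psi$. My first attempt would use the branch points: the critical values of $P$ are determined by the discriminant in $h$, and those of $Q$ by the discriminant in $z$. The identity would force the finite critical-value sets of $P$ and $Q$ to be related. I would compute both as functions of $\gamma,\alpha_1,\alpha_3$ and exhibit a numerical specialization, say $\alpha_1=1$, $\alpha_3=1/10$, $\gamma=1$, where they are incompatible. A cleaner fallback is to note that the kernel relation with $\omega,\wh f$ rational in $(x,Y)$ would imply $\omega$ is a rational function on the spectral curve. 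One would then check that the explicit series expansions of $\wh f(\omega)$ and $x$ around $\omega=\infty$ (generating-function regime) cannot be matched, using the first few coefficients $F_0,F_1,F_2$ computed from Theorem~\ref{thm:ratio-param-Ising-quad-sym} against those implied by $Y(x)$.
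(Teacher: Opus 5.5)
\textbf{Gap: the decisive step is left open.} Your reduction is correct and is essentially the paper's. By properness of $h\mapsto(\omega_{\textnormal{sym}},\wh f_{\textnormal{sym}})$ and $z\mapsto(x,y)$, a joint parametrization obeying the kernel relation gives non-constant rational maps $\phi,\psi$ with $P\circ\phi=-c\,Q\circ\psi$. Your formula for $P$ and the pole data are also right: simple poles at $h=0,\infty$, a double pole at $h=1$, and poles of order $4$ at $z=0,\infty$. What is missing is the proof that no such $\phi,\psi$ exist, and neither of your proposed ways of closing it works.

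Comparing critical values of $P$ and $Q$ constrains nothing. The critical values of $P\circ\phi$ and of $Q\circ\psi$ also contain $P(\textnormal{crit.\ values of }\phi)$ and $Q(\textnormal{crit.\ values of }\psi)$, with $\phi,\psi$ arbitrary. So the identity alone forces no relation between the two critical-value sets, and there is nothing to test at a numerical specialization. The series fallback targets a different statement: $\omega,\wh f$ being rational on the spectral curve amounts to $\deg\psi=1$, which a joint parametrization does not require. Moreover, checking finitely many coefficients such as $F_0,F_1,F_2$ cannot exclude rational maps of arbitrary degree.

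\textbf{How to close it.} The paper does a genus computation. It passes to the quotient curve $\lambda_{\textnormal{sym}}(H)=-c\,\xi(Z)$ with $Z=z^2$, which is quadratic in $H$ with four simple branch points in $Z$, hence not of genus $0$. The $(h,z)$-curve dominates it, so by L\"uroth it has no rational component.

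Your own pole data also finish the proof by Riemann--Hurwitz, with no computer algebra. All poles of $Q\circ\psi$ have order divisible by $4$. Hence every point of $\phi^{-1}(0)\cup\phi^{-1}(\infty)$ must have ramification index divisible by $4$, and every point of $\phi^{-1}(1)$ an even index. For $n=\deg\phi\geq1$ this gives at least $3n/4+3n/4+n/2=2n$ units of ramification, more than the $2n-2$ allowed for a map $\mathbb{P}^1\to\mathbb{P}^1$. This divisibility condition is the correct form of your ``$\{2,1,1\}\cdot e\neq\{4,4\}\cdot e'$''.

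You must, however, state the nondegeneracy this uses. The numerator of $P$ at $h=1$ equals $\alpha_1^2(\gamma+\alpha_3)^2$, so one needs $\alpha_1\neq0$, i.e.\ $t_2\neq0$. For $\alpha_1=0$ the double pole cancels and indeed $P(z^4)=-c\,x(z)y(z)$: this is the constellation case, where a joint parametrization does exist. That is what ``in general'' refers to.
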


The rest of the paper is organized as follows. In Section~\ref{sec:reminders}, we recall some important results from previous works. In Section~\ref{sec:reminder-mono}, we recall properties of the monochromatic boundary from~\cite{eynard} and introduce the bivariate polynomial $E(x,y)$ called the spectral curve, and in Section~\ref{subsec:constell}, we recall some relevant results from~\cite{bouttier-carrance} and discuss some consequences that were not stated there. In Section~\ref{sec:def-split}, we set up the notation for all the auxiliary functions that we will need, and recall the correspondence between hypermaps and the Ising model on maps. We also establish a Tutte decomposition/splitting procedure for general boundary conditions in Proposition~\ref{prop:splitting-procedure}. In Section~\ref{sec:general-master-eq}, we apply Proposition~\ref{prop:splitting-procedure} to various auxiliary functions, leading to an equality between the spectral curve $E(x,y)$ that does not depend on $\omega$, and another bivariate polynomial $Q(x,y)$ whose coefficients are explicit functions of $\omega$, $\wh{f}(\omega)$ and some other auxiliary generating functions, as stated in Proposition~\ref{prop:Q=E}. Since both polynomials have the same degree, same leading coefficient and vanish on $(x,Y(x))$, they must be equal, and requiring the vanishing of coefficients of $Q-E$ thus gives a polynomial system on $\wh{f}(\omega)$ and related functions. We then prove that this system has non-zero Jacobian determinant evaluated at those functions, which gives Theorem~\ref{thm:algebraicity}. We also compare this strategy to the kernel method. We then show in Proposition~\ref{prop:tutte-bc-from-splitting} that we recover the Tutte equation of~\cite{bouttier-carrance} from our splitting equations. In Section~\ref{sec:ising-quad}, we apply the strategy of Theorem~\ref{thm:algebraicity}: we obtain an explicit master equation in the general (non-symmetric) case in Section~\ref{sec:master-eq-ising-quads}, and solve it for the symmetric case in Section~\ref{sec:rat-param-ising-quads}, yielding Theorem~\ref{thm:ratio-param-Ising-quad-sym}. We conclude with a list of open questions in Section~\ref{sec:ccl}. In Appendix~\ref{sec:appendix-alg-by-bmj}, we give details on the application of the kernel method to the Tutte equation of~\cite{bouttier-carrance} for general potentials.

\section{Reminders on related works}
\label{sec:reminders}

\subsection{Hypermaps with a monochromatic boundary}
\label{sec:reminder-mono}

Let us recall some important results from~\cite{Eyn_2003} summarized in~\cite[Chapter 8]{eynard}. Note that we are almost using the exact same convention, save for the fact that we consider proper hypermaps, while~\cite[Chapter 8]{eynard} describes the Ising model via bicolored digons in maps with monochromatic edges. The only difference that this entails is that it introduces in our formualae a minus sign in front of the parameter $c$ that counts edges.

Hence we define $Y(x)=\frac{1}{c}(W(x)-V'(x))$, where $W(x)=W^{(0)}_1(x)$ in~\cite{eynard}. It is an algebraic function of $x, t, \{t_i\}, \{\widetilde{t}_j\}$, and $(x,Y(x))$ admits the following explicit rational parametrization:

\begin{thm}[Theorem \cite{Eyn_2003} and 8.3.1 in~\cite{eynard}]
For a formal variable $z$, we have
\begin{equation}
  x(z)=\gamma z + \sum_{k=0}^{\widetilde{d}} \alpha_k z^{-k}, \qquad  Y(x(z))=y(z)=\gamma z^{-1} + \sum_{k=0}^{d} \beta_k z^{k},
\label{eq:xy-param}
\end{equation}
where the coefficients $\alpha_k,\beta_k,\gamma$ are algebraic functions of $ t, \{t_i\}, \{\widetilde{t}_j\}$, that are characterized by:
\begin{equation}
    V'(x)+cy = \frac{t}{\gamma z} + O(z^{-2}), \ \ \tilde V'(y)+cx = \frac{tz}{\gamma} + O(z^{2}), \ \ \gamma^2=-\frac{ct}{t_2\widetilde{t}_2-c^2}+O(t^2).
\label{eq:system-on-alpha-beta-gamma}
\end{equation}
\end{thm}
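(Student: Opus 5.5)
The plan is to follow the standard one-cut strategy for the two-matrix model. We derive the loop (Tutte) equations for the monochromatic disk function $W(x)$ and the mixed functions. From them we build a polynomial algebraic equation $E(x,Y(x))=0$, and finally check that the proposed parametrization solves it with the correct branch and normalization.

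\textbf{Tutte equations and the spectral curve.} Root-edge removal on a hypermap with a white monochromatic boundary gives an equation relating $W(x)$ to an auxiliary bivariate series $W(x,y)$. This series counts maps whose boundary is a white run followed by a single black-bordered segment, tracked by $y$. Peeling this mixed series at the white--black interface closes the system, in the spirit of the Dobrushin condition recalled in the introduction. After taking polynomial parts in $x$ and $y$, we eliminate $W(x,y)$ at the value $y=Y(x)$, which cancels the kernel $(V'(x)+cy-\ldots)$. This yields a polynomial $E(x,y)$ of bidegree $(\wt d+1,d+1)$ in which only finitely many unknown coefficients (low moments) appear, and $E(x,Y(x))=0$.

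\textbf{Genus zero and the Laurent form.} As a formal power series in $t$, the curve must have genus zero. The substantive step is to show that $x$ has a single pole at one point $z=\infty$ and $y$ has a single pole at one point $z=0$ on the uniformizing sphere. For this we use the one-cut property: $W(x)\sim t/x$ at $x=\infty$, $x$ is the only branch with $y\to\infty$ as $x\to$ finite is excluded, and the small-$t$ perturbative structure holds. With such pole structure, $x(z)$ is a polynomial in $z^{-1}$ plus $\gamma z$, and $y(z)$ is a polynomial in $z$ plus $\gamma z^{-1}$. The pole orders are dictated by $\deg \wt V'=\wt d-1$ and $\deg V'=d-1$ through the relations $cy\sim -V'(x)$ near $x=\infty$ and $cx\sim-\wt V'(y)$ near $y=\infty$. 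A Möbius rescaling of $z$ lets us take the same leading coefficient $\gamma$ for both.

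\textbf{Characterizing the coefficients.} The asymptotic condition $W(x)=t/x+O(x^{-2})$ translates into $V'(x(z))+cy(z)=t/(\gamma z)+O(z^{-2})$ as $z\to\infty$. The symmetric statement for the black-boundary function gives the second relation at $z\to0$. Matching coefficients gives a polynomial system in $\alpha_k,\beta_k,\gamma$. At $t=0$ it has the trivial solution $\gamma=0$, and its Jacobian in suitable rescaled variables is invertible. The implicit function theorem for formal power series then gives unique algebraic solutions, with $\gamma^2=-ct/(t_2\wt t_2-c^2)+O(t^2)$ coming from the quadratic terms.

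\textbf{Main obstacle.} I expect the main obstacle to be the genus-zero/one-cut justification in the formal setting. Rather than arguing geometrically, I would verify it constructively: define $x(z),y(z)$ by the system above and show that the resulting $y$ expanded at $x=\infty$ satisfies the loop equations. By uniqueness of the formal power series solution of the Tutte equations, it then equals $Y(x)$.
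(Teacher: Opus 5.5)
The paper does not prove this statement; it quotes it from Eynard. The derivation there follows your outline: the master loop equation $E(x,Y(x))=0$, rationality of the curve (the one-cut property), the Laurent form from the pole structure, and the coefficients from the asymptotics at the two poles. Your outer steps are fine in outline. The central step, that the curve is rational, is only asserted, and the fallback in your last paragraph fails as stated.

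The polynomial $E$ contains the $N=(d-1)(\widetilde d-1)$ coefficients of $P(x,y)$. These are combinations of mixed moments $\brak{A^iB^j}$, not coefficients of $Y$. For \emph{every} value of them there is exactly one branch with $cY=-V'(x)+O(x^{-1})$: the $x^{-1}$-adic fixed point of $V'(x)+cy=(c^2t-P(x,y))/(c(\widetilde V'(y)+cx))$. The normalization $W=t/x+O(x^{-2})$ only pins the top coefficient $t\,t_d\widetilde t_{\widetilde d}$ of $P$. So the master equation alone has an $(N-1)$-parameter family of solutions with the right asymptotics, generically of genus $N-1$ (the number of interior lattice points of the Newton polygon of $E$). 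Checking that your parametrized $y$ satisfies \emph{some} equation of the shape of $E$ therefore does not identify it with $Y$. The uniqueness you invoke does hold, but for the full closed Tutte system on the Dobrushin words $A^iB^j$ (by induction on the number of edges). You only verify one consequence of that system, and you never construct the two-variable series it involves.

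The missing ingredient is a uniqueness lemma that uses finer information on $Y$. A hypermap with $v$ vertices has boundary length at most $2v-2$, so each $t^n$-coefficient of $W(x)$ is a polynomial in $1/x$. Since $P=O(t)$, expanding $E(x,Y)=0$ around $Y_0=-V'(x)/c$ shows inductively that the $t^n$-coefficient of $Y$ is a Laurent polynomial divided by $K(x)=cx+\widetilde V'(-V'(x)/c)$. The $N$ roots of $K$ are the $x$-coordinates of the intersection points of the two rational components $cy=-V'(x)$ and $cx=-\widetilde V'(y)$ of the curve at $t=0$, and $x=0$ is a simple root. Absence of poles at the $N-1$ nonzero roots gives $N-1$ linear conditions on the $t^n$-coefficient of $P$. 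Together with the top coefficient these determine it, because the $x^iV'(x)^j$ with $i\le d-2$, $j\le\widetilde d-2$ form a basis of polynomials of degree less than $N$. Geometrically, only the node at the origin is smoothed, which is why the curve stays rational.

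With this lemma your constructive route works, provided you also check two things: that the parametrized $y$ has the polynomiality property, and that the curve it satisfies really has the shape of $E$.

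Two smaller corrections. First, $E$ has degree $d$ in $x$ and $\widetilde d$ in $y$, not $(\widetilde d+1,d+1)$. Second, $x$ has poles at both $z=\infty$ (simple) and $z=0$ (order $\widetilde d-1$). What is true is that the curve has exactly two places at infinity, which you send to $0$ and $\infty$.
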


Equivalently,  $Y(x)$ is characterized as the unique power series solution of $E(x,y)=0$, where the polynomial $E(x,y)$ is equal to (see~\cite[Theorem 8.3.2]{eynard}):
\begin{equation}
 E(x,y) = -\frac{(-1)^{d}c^2}{\gamma^{d+\tilde d-2}}\det
 \begin{pmatrix}
  \gamma & \alpha_0-x &  \alpha_{1} & \ldots  & \alpha_{\widetilde{d}-1} &  & & & \cr
 & \gamma & \alpha_0-x & \alpha_{1} &\ldots  & \alpha_{\widetilde{d}-1} &  & &  \cr
 & & \ddots &  &  &  & \ddots &  &   \cr
 & & & \ddots &  &  &  & \ddots &     \cr
 & & & & \gamma & \alpha_0-x &  \alpha_{1} & \ldots &\alpha_{\widetilde{d}-1}     \cr
  \beta_{d-1} &  \ldots & \beta_1 & \beta_0-y & \gamma & & & \cr
 & \beta_{d-1}& \ldots & \beta_1 & \beta_0-y & \gamma & & \cr
 & & \ddots &  &  &  &  & \ddots & \cr
 & & & \beta_{d-1} & \ldots & \beta_1 & \beta_0-y & \gamma & \cr
 \end{pmatrix}.
\label{eq:master-equation-monochromatic-case}
\end{equation}
It can also be written as
\begin{equation}
  E(x,y)=(V'(x)+cy)(\tilde V'(y)+cx)+ \frac{1}{c}P(x,y)- ct,
\label{eq:E-def}
\end{equation}
where
\begin{equation}
  P(x,y)= \brak{\frac{V'(x)-V'(A)}{x-A}\frac{\tilde V'(y)-\tilde V'(B)}{y-B}}.
\end{equation}

\subsection{Previous results on the alternating boundary condition}
\label{subsec:constell}

Let us recall and revisit some results from~\cite{bouttier-carrance}. This work established a Tutte equation (equation (27) there) on the generating function that was called $M(x,w)$ there, and that we denote here by $\underline{M}(x,w)$ to avoid ambiguity. As the notational conventions of~\cite{bouttier-carrance} are slightly different from those we use here (see Section~\ref{sec:general-master-eq} for more details), this generating function is related to the ones of the present paper by
\[
\ul M(x,w)=\frac{1}{w}M\left(x,\frac{1}{w}\right)-W(x),
\]
and the generating function of interest in~\cite{bouttier-carrance} is $A(w)=tw+ \ul M(0,w)=f_{00}(1/w)$. 

Then, equation (27) in~\cite{bouttier-carrance} is written as
\begin{equation}
K(w,x)\ul M(w,x) = \ul R(w,x),
\label{eq:bouttier-carrance-eq-notation}
\end{equation}
with 
\begin{align*}
K(x,w) &= 1 - A(w) - w x Y(x),\\
\ul R(x,w)&= w x W(x)Y(x) - w\sum_i t_i \left[x^i(\ul M(x,w)+W(x))\right]_{x^{\geq 0}}.
\end{align*}
We compare this equation with the functional equations that we derive in the present paper in Section~\ref{sec:bouttier-carrance-eq-rederived}. Let us focus for the moment on~\eqref{eq:bouttier-carrance-eq-notation} and its role in~\cite{bouttier-carrance}. In full generality, the remainder term $\ul R$ is hard to tackle. However, it was mentioned in~\cite[Section 3]{bouttier-carrance} that~\eqref{eq:bouttier-carrance-eq-notation} can be rewritten in a form where one can apply~\cite[Theorem 3]{BoJe06}, and conclude that $\ul M(x,w)$, hence $A(w)$, is algebraic in $w, x, W(x), t, t_i, \wt{t}_j$. As this argument was not detailed in~\cite{bouttier-carrance}, and we develop here another path to the algebraicity of $A(w)$, we detail in Appendix~\ref{sec:appendix-alg-by-bmj} how~\eqref{eq:bouttier-carrance-eq-notation} can be rewritten in such a form, so that we can compare the two strategies in detail in Section~\ref{sec: algebraicity-2-strats}.

In the special case of \emph{$m$-constellations} (i.e. for the specialization $t_k=0$ for $k \neq m$ and $\wt{t}_\ell=0$ if $m \notdiv \ell$), $\ul R(x,w)$ simplifies greatly (see~\cite[Section 4]{bouttier-carrance}):
\[
\ul R(x,w)=wxY(x)^2-wx^mY(x)-x^{m-1}A(w).
\]
Going back to equation~\eqref{eq:bouttier-carrance-eq-notation}, one can then perform a variant of the \emph{kernel method} (see for instance~\cite{BoJe06} for a detailed exposition, and Appendix~\ref{sec:appendix-alg-by-bmj} for a more classical application of it to solve~\eqref{eq:bouttier-carrance-eq-notation} in the general case). Indeed, instead of eliminating the \emph{catalytic variable} $x$ from~\eqref{eq:bouttier-carrance-eq-notation}, one can consider the unique formal power series $\mathpzc{w}(\xi)$ in $\xi^{-1}$ such that
\begin{equation}
\mathpzc{w}(\xi)=\frac{1-A(\mathpzc{w}(\xi))}{\xi}.
\label{eq:kernel-relation}
\end{equation}
Since the substitution $\ul M(x,\mathpzc{w}(xY(x)))$ is well-defined, by~\eqref{eq:bouttier-carrance-eq-notation} we deduce that
\[
\ul R(x,\mathpzc{w}(xY(x)))=0
\]
as well. This gives a linear system of two equations on the series $\mathpzc{w}(xY(x))$ and $A(\mathpzc{w}(xY(x)))$, which determines them as

\begin{thm}[Explicit parametrization for $m$-constellations, Theorem 1 in~\cite{bouttier-carrance}]
\[
\mathpzc{w}(xY(x))=\frac{x^{m}}{(xY(x))^2}, \qquad A(\mathpzc{w}(xY(x)))=1-\frac{x^m}{xY(x)}.
\]
\end{thm}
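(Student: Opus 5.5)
The plan is to carry out the kernel-method substitution described just above: specialize to $\xi=xY(x)$, and solve the two resulting relations, which turn out to be linear in the two unknowns. Write $u:=xY(x)$, $\mathpzc{w}:=\mathpzc{w}(u)$ and $a:=A(\mathpzc{w})$. Two facts then hold. First, the kernel relation~\eqref{eq:kernel-relation} at $\xi=u$ reads $1-a-\mathpzc{w}u=0$, i.e.\ $K(x,\mathpzc{w})=0$. Second, since $K(x,\mathpzc{w})\,\ul M(x,\mathpzc{w})=\ul R(x,\mathpzc{w})$, we get $\ul R(x,\mathpzc{w})=0$.

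\textbf{Well-definedness of the substitutions.} This is the only delicate point, so I would settle it first.
\begin{itemize}
\item Since $W(x)=\sum_p W_p x^{-p-1}$ and $Y=(W-V')/c$, the series $u=xY(x)$ equals $-xV'(x)/c$ plus a series in $x^{-1}$. For constellations this gives $u=\frac{t_m}{c}x^m+O(x^{-1})$ up to lower terms, and $x^{-m}$ can be treated as the small variable, with $t$ formal.
\item The fixed-point equation $\mathpzc{w}=(1-A(\mathpzc{w}))/\xi$ has a unique solution in $\xi^{-1}\Q[\dots][[\xi^{-1}]]$ by iteration, because the map $\mathpzc{w}\mapsto (1-A(\mathpzc{w}))/\xi$ is contracting for the $\xi^{-1}$-adic topology, combined with the $t$-adic topology since $A(\mathpzc{w})=t\mathpzc{w}+\dots$.
\item Composing with $\xi=u$, whose inverse is a well-defined Laurent series in $x^{-1}$, gives a well-defined series.
\item The substitution into $\ul M(x,\mathpzc{w})$ is legitimate because $\ul M$ is a power series in $\mathpzc{w}$ whose coefficients are controlled in the mixed $(t,x^{-1})$-adic sense: each $F$-type coefficient has bounded $x$-degree relative to its $t$-order.
\end{itemize}
I expect this convergence bookkeeping to be the main obstacle. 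I would handle it by grading by powers of $t$, which bounds the number of vertices and hence the boundary lengths, so that only finitely many terms contribute at each order.

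\textbf{Solving the linear system.} Given the two facts above, the computation is short. Multiplying the constellation form of $\ul R$ by $x$, the condition $\ul R(x,\mathpzc{w})=0$ becomes
\[
\mathpzc{w}u^2-\mathpzc{w}x^m u-x^m a=0 .
\]
Substituting $a=1-\mathpzc{w}u$ from the kernel relation, the terms $\pm\mathpzc{w}x^m u$ cancel and we are left with $\mathpzc{w}u^2=x^m$. Hence
\[
\mathpzc{w}(xY(x))=\frac{x^m}{(xY(x))^2},\qquad A(\mathpzc{w}(xY(x)))=1-\mathpzc{w}u=1-\frac{x^m}{xY(x)} .
\]
Both are legitimate divisions, since $u$ is invertible as a Laurent series with leading term $\frac{t_m}{c}x^m$ (this is where I take $t_m\neq 0$).
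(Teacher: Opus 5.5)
Your proposal is correct and follows the paper's own argument. Substituting $w=\mathpzc{w}(xY(x))$ makes the kernel $K$ vanish, which forces $\ul R(x,\mathpzc{w}(xY(x)))=0$, and the resulting $2\times 2$ linear system in $(\mathpzc{w},A(\mathpzc{w}))$ has determinant $-(xY(x))^2\neq 0$, so solving it gives the stated formulas. Your convergence bookkeeping can be simplified: $\ul M$ is already a power series in $x^{-1}$ and $w$, and $\mathpzc{w}(xY(x))=O(x^{-m})$, so the substitution converges $x^{-1}$-adically without any $t$-grading.
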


One might worry that these formulae do not depend only on $xY(x)$, but also on $x^m$. However, $xY(x)$ is actually a Laurent series in $x^{-m}$: $xY(x)=\Xi(x^m)=x^m +t+...$, and one can perform a Lagrange inversion, to express any analytic function in $x^{-m}$, as an analytic function in $(xY(x))^{-1}$. Thus, the above formulae properly define the functions $\mathpzc{w}(\Xi), A(\Xi)$.

One can perform the additional substitution $x=x(z)$, or more precisely $\Xi(x^m)=\xi(z^m)$, with $\xi(z^m)=x(z)y(z)$ being a uniquely defined Laurent polynomial in $z^m$. Since $\chi(z^m)=x(z)^m$ is also well-defined as a Laurent polynomial in $z^m$, this yields the rational parametrization
\begin{equation}
\mathpzc{w}(s)=\frac{\chi(s)}{\xi(s)^2}, \qquad a(s)=1-\frac{\chi(s)}{\xi(s)},
\label{eq:const-param-ratio-s}
\end{equation}
which was stated in Theorem 1 in~\cite{bouttier-carrance}.

Thus, in the case of $m$-constellations, the algebraic curve $(w,A(w))$ can be expressed as a pair of rational functions over the algebraic curve $(x(z),y(z))$ corresponding to the monochromactic boundary condition. Moreover, this parametrization of $(w,A(w))$ over $(x(z),y(z))$ satisfies the kernel relation~\eqref{eq:kernel-relation}.

Let us now discuss another remarkable feature of this parametrization, that was not stated in~\cite{bouttier-carrance}. Let us switch the variable $w$ to the inverse variable $\omega$ (as is the case in the present paper), and write $\Omega(s)$ for $\mathpzc{w}(s)^{-1}$. Geometrically, the mapping $\varphi: \C\P^1 \to \C\P^1$, $\varphi(z)=z^m$ is a branched covering of degree $m$. To the curve $(x(z),y(z))$ (and its explicit parametrization), one associates a canonical symplectic form $\sigma=\dinf x\wedge \dinf y$  -- note that this has deeper implications as this curve is upgraded to a \emph{spectral curve} through the formalism of topological recursion, when one generalizes the enumeration problem to more general topologies, see~\cite{eynard}. Likewise, we define $\tau=\dinf\Omega\wedge\dinf a$ from $(\Omega(s),a(s))$ --  here the enumeration problem for other topologies has yet to be solved, and thus one cannot really talk of a spectral curve for now, see the discussion in Section~\ref{sec:ccl}. Then, we have the following property:
\begin{prop}
We have
\[
\varphi^*\tau=m\cdot \dinf x \wedge \dinf y,
\]
i.e. the pullback of $\tau$ by $\varphi$ is exactly equal to $\sigma$ multiplied by the degree of the covering $\varphi$.
\label{prop:sympl-nice}
\end{prop}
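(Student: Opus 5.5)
The plan is to reduce the identity to a Jacobian computation. The key observation is that after pulling back by $\varphi$, both $\Omega$ and $a$ become explicit monomial-type rational functions of $x$ and $y$.

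First, I would use the parametrization \eqref{eq:const-param-ratio-s} together with the definitions $\chi(z^m)=x(z)^m$ and $\xi(z^m)=x(z)y(z)$ to write
\[
\Omega(z^m)=\frac{\xi(z^m)^2}{\chi(z^m)}=x(z)^{2-m}y(z)^2, \qquad a(z^m)=1-\frac{\chi(z^m)}{\xi(z^m)}=1-\frac{x(z)^{m-1}}{y(z)}.
\]
Thus $\varphi^*\Omega=G(x,y)$ and $\varphi^*a=H(x,y)$, with $G(x,y)=x^{2-m}y^2$ and $H(x,y)=1-x^{m-1}y^{-1}$. This uses only that pullback commutes with composition and with products and quotients of functions.

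Next, I would use that the pullback commutes with $\dinf$ and $\wedge$:
\[
\varphi^*\tau=\dinf(\varphi^*\Omega)\wedge \dinf(\varphi^*a)=\dinf G(x,y)\wedge \dinf H(x,y)=\det\frac{\partial(G,H)}{\partial(x,y)}\,\dinf x\wedge \dinf y .
\]
It then remains to compute the Jacobian determinant:
\[
\partial_x G\,\partial_y H-\partial_y G\,\partial_x H=(2-m)x^{1-m}y^2\cdot\frac{x^{m-1}}{y^2}-2x^{2-m}y\cdot\Bigl(-(m-1)\frac{x^{m-2}}{y}\Bigr)=(2-m)+2(m-1)=m .
\]
The determinant is a constant, which gives $\varphi^*\tau=m\,\dinf x\wedge\dinf y$.

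The only genuinely delicate point is interpretive rather than computational. On the one-dimensional curve itself every 2-form vanishes, so the statement must be read at the level of the symplectic forms attached to the parametrized pairs. Concretely, I would regard $(x,y)\mapsto(\Omega\circ\varphi,a\circ\varphi)=(G,H)$ as a map between planes carrying the forms $\dinf x\wedge\dinf y$ and $\dinf\Omega\wedge\dinf a$, and the Jacobian identity above is exactly the claim. I would state this convention explicitly.

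I would also remark that the multiplicative factor $m$ is consistent with $\varphi$ having degree $m$. Finally, I would check that no sign or branch issue arises from the fractional-looking exponent $2-m$: $G$ and $H$ are honest Laurent monomials, so everything is rational and the computation is valid identically.
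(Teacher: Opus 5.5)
Your proposal is correct and is essentially the paper's argument: both substitute $\Omega\circ\varphi=x^{2-m}y^2$ and $a\circ\varphi=1-x^{m-1}/y$ and compute $\dinf(\Omega\circ\varphi)\wedge\dinf(a\circ\varphi)$ directly, and your Jacobian determinant is the same calculation written compactly. Your sign $\partial_y H=+x^{m-1}y^{-2}$ is the correct one (the paper's intermediate line has $-x^{m-1}\dinf y$, a typo that does not affect its final value $m$), and your remark that the identity must be read formally in independent variables $x,y$ makes explicit what the paper does implicitly.
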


\begin{proof}
We have
\[
\begin{aligned}
\varphi^*\tau&=\dinf(\Omega\circ\varphi)\wedge\dinf(a\circ\varphi)=\dinf\left(\frac{y^2}{x^{m-2}} \right)\wedge\dinf\left(1-\frac{x^{m-1}}{y} \right)\\
&=\frac{1}{x^{m-1}}\left(2xy\dinf y -(m-2)y^2\dinf x \right)\wedge \frac{1}{y^2}\left( -(m-1)yx^{m-2}\dinf x - x^{m-1}\dinf y\right)\\
&=m\cdot \dinf x \wedge \dinf y.
\end{aligned}
\]
\end{proof}

Given this very nice geometric property, it is natural to wonder whether, when considering higher genera, the generating functions of the alternating boundary condition can also be expressed nicely in terms of the ones of the monochromatic condition. If this is the case, this would be analogous to the \emph{symplectic invariance} phenomenon, that occurs for spectral curves related by symplectomorphisms (see for instance~\cite{fully-simple-ciliated-maps}).

These properties of the $m$-constellation case were a strong motivation to study the alternating boundary condition on more general hypermaps. In this paper, we give some partial answers for the general case for the disk topology. In particular, we show that on more general hypermaps, $(\omega,A(\omega^{-1}))$ cannot be written as rational functions of $(x,y)$ satisfying the kernel relation~\eqref{eq:kernel-relation}, see Corollary~\ref{cor:non-co-rat}.

\section{Preliminaries}
\label{sec:def-split}

\subsection{General definitions}
\label{subsec:def-GFs}

As we will handle auxiliary generating functions with various types of boundary conditions, it will be convenient to borrow some notational conventions from formal matrix models.

\begin{defn}
Let $d\in \N$, $\widetilde{d}\in \N$ and let $P$ be a word over $\{A,B\}$: we identify such a word with a non-commutative monomial in $A,B$, and as a boundary condition for hypermaps through the correspondence $A \mapsto \bullet, B \mapsto \circ$. We denote by $\textnormal{Bic}^{d,\widetilde{d}}(P)$ the set of hypermaps of the disk with boundary condition $P$, and whose white (resp. black) faces are of degree at most $d$ (resp. $\widetilde{d}$).

We then define
\[
\brak{P} :=\sum_{\mathfrak{m} \in\textnormal{Bic}^{d,\widetilde{d}}(P)} t^{v(\mathfrak{m})}c^{-e(\mathfrak{m})}\prod_{2\leq i\leq d}t_i^{f_\circ^i(\mathfrak{m})}\prod_{2\leq j\leq \widetilde{d}}\widetilde{t}_j^{f_\bullet^j(\mathfrak{m})},
\]
where $v(\mathfrak{m})$ is the number of vertices of $\mathfrak{m}$, $e(\mathfrak{m})$ its number of edges, and $f_\square^i(\mathfrak{m})$ its number of faces of color $\square$ and degree $i$. In other words, $\brak{P}$ is the generating function of maps in $\textnormal{Bic}^{d,\widetilde{d}}(P)$, enumerated with respect to their number of vertices, edges, and faces of given degree.
\\
We extend this definition by linearity first to any non-commutative polynomial in $A,B$, then to any formal series in the non-commutative variables $A$ and $B$. We take the convention that $\textnormal{Bic}^{d,\widetilde{d}}(1)=\{ \cdot\}$ (the trivial vertex map) and thus $\brak{1}=t$, and that $\brak{\varnothing}=1$ (and $\textnormal{Bic}^{d,\widetilde{d}}(\varnothing)=\varnothing$).
\label{def:traces-as-map-amplitudes}
\end{defn}

\begin{rem}
    Note that linearity allows to write generating series such as  $$ \frac{1}{x-A} = \sum_{k=0}^\infty x^{-k-1} A^k $$

Thus, our main generating function $\wh{f}$ is equal to:
\[
\wh{f}=\frac{1}{\omega}\sum_{k\geq 0}\frac{1}{\omega^k}\brak{(AB)^k}-c=\brak{\frac{1}{\omega -BA}} -c.
\]
\end{rem}

We define similarly the following auxiliary generating functions:
\begin{align}
f_{ij}(\omega):=\brak{A^i\frac{1}{\omega - BA}B^j}&, \qquad D_{ij}:=\brak{A^iB^j} \text{ for } i,j \in \Z_{\geq 0} \nonumber\\
W(x):=\brak{\frac{1}{x-A}}&, \qquad M(x,\omega):=\brak{\frac{1}{x-A}\frac{1}{\omega - BA}}, \nonumber\\
R(x,\omega):=\brak{\frac{V'(x) - V'(A)}{x-A}\frac{1}{\omega - BA}}&, \qquad S_+(x,y,\omega):=\brak{\frac{V'(x) - V'(A)}{x-A}\frac{1}{\omega - BA}\frac{1}{y-B}}, \nonumber\\
\widehat{P}(x,y,\omega)&:=\brak{\frac{V'(x) - V'(A)}{x-A}\frac{1}{\omega - BA}\frac{\widetilde{V}'(y)-\widetilde{V}'(B)}{y-B}},
\label{eq: auxiliary fcts def}
\end{align}
and, for functions that are not symmetric in $A \leftrightarrow B$ and $x \leftrightarrow y$, we denote the mirrored function with a tilde ($\widetilde{W}(y)$, etc.). We will also make use the generating functions of the monochromatic boundary condition that were defined in Section~\ref{sec:reminder-mono}: $cY(x)=W(x)-V'(x), \, cX(y)=\widetilde{W}(y)-\widetilde{V}'(y)$.

\subsection{Correspondence with the Ising model}
\label{sec:corresp-ising}

Let us now turn to the connection between hypermaps and the Ising model on maps, i.e. maps carrying +/- spins on their faces. A word $P$ on $\{A,B\}$ can also be seen as a word over $\{+,-\}$, and thus as a boundary condition for Ising maps. Let $\textnormal{I}_{d,\wt d}(P)$ be set of Ising planar maps with boundary condition $P$ and $+$ (resp. $-$) faces of degree at most $d$ (resp. $\wt d$). Let us define the generating function of $\textnormal{I}_{d,\wt d}(P)$ as
\[
I(P)=\sum_{\mathfrak{m}\in\textnormal{I}_{d,\wt d}(P)}t^{v(\mathfrak{m})}\prod_{3\leq i\leq d}t_i^{f_{+}^i(\mathfrak{m})}\prod_{3\leq j\leq \widetilde{d}}\widetilde{t}_j^{f_{-}^j(\mathfrak{m})}c_{++}^{e_{++}(\mathfrak{m})}c_{--}^{e_{--}(\mathfrak{m})}c_{+-}^{e_{+-}(\mathfrak{m})},
\]
where $e_{\square\triangle}(\mathfrak{m})$ is the number of edges of type $\square\triangle$ in $\mathfrak{m}$. Then we have

\begin{prop}
With the matching of parameters
\[ 
\begin{pmatrix}
  c^2 & -ct_2 \cr
  -c\wt{t}_2 & c^2  \cr
 \end{pmatrix}
=
 \begin{pmatrix}
  c_{+-} & c_{--} \cr
  c_{++} & c_{+-}  \cr
 \end{pmatrix}^{-1},
\]
we have, for any boundary condition $P$,
\[
\brak{P}=I(P).
\]
\end{prop}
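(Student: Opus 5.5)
The plan is a local bijective correspondence: collapse bivalent faces, i.e.\ digons. A hypermap in $\textnormal{Bic}^{d,\wt d}(P)$ has inner faces of degree $\ge 2$, including white and black digons weighted by $t_2$ and $\wt t_2$. I will replace every maximal chain of consecutive digons (glued along edges, linking two non-digon faces or the boundary) by a single edge of an Ising map. Conversely, every edge of an Ising map is expanded into a chain of digons. The faces of degree $\ge 3$ become the Ising faces, with spin $+$ for white and $-$ for black, and the boundary condition $P$ is preserved. This gives a many-to-one map $\textnormal{Bic}^{d,\wt d}(P)\to \textnormal{I}_{d,\wt d}(P)$. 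Vertex weights $t$ and the weights $t_i,\wt t_i$ for $i\ge3$ are unchanged, so the identity reduces to showing that summing over the fibre of a fixed Ising edge gives the edge weight $c_{\square\triangle}$.

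For a single Ising edge separating faces of spins $\epsilon,\epsilon'\in\{+,-\}$, the hypermap fibre consists of sequences of edges $e_0,e_1,\dots,e_k$ with $k$ digons in between. The digons must alternate in colour so that the hypermap condition holds. Consecutive faces along the chain are adjacent, so the colours strictly alternate starting from the colour opposite to $\epsilon$, and the parity of $k$ is determined by whether $\epsilon=\epsilon'$. Each edge contributes $c^{-1}$, each white digon $t_2$, each black digon $\wt t_2$. The fibre sum is then the $(\epsilon,\epsilon')$ entry of the geometric series
\[
\sum_{k\ge0} c^{-1}\,(T c^{-1})^k \;=\; (c\,\mathrm{Id}-T)^{-1},
\qquad
T=\begin{pmatrix}0& t_2\\ \wt t_2 & 0\end{pmatrix},
\]
with suitable index conventions for which colour sits where. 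Up to the sign convention $t_2\mapsto -t_2$ inherent in the paper's matching, this gives $c\,(c^2\,\mathrm{Id}+cT')^{-1}$. I will check carefully that
\[
\begin{pmatrix}c^2 & -ct_2\\ -c\wt t_2 & c^2\end{pmatrix}^{-1}
\]
reproduces exactly $c_{+-}$ on the diagonal and $c_{--},c_{++}$ off the diagonal. In particular I must verify which digon colour lies next to which spin: a white digon sits between black faces, so chains between two $-$ faces have an odd number of white digons in the middle, producing $t_2$ terms in $c_{--}$.

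Boundary edges also need care. Edges incident to the boundary, and boundary bridges, are expanded in the same way, with the boundary face playing the role of a face of prescribed colour given by the letter of $P$ on each side. The boundary-condition rules, namely a colour for edges incident to an inner face and distinct letters on the two visits of a bridge, match the Ising rules, so the fibres factor over edges here as well. The degenerate cases are the trivial map ($\brak{1}=t$) and maps consisting only of digons, which are chains closing onto the boundary; these should match the corresponding Ising configurations.

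The main obstacle I expect is the sign and index bookkeeping in the $2\times2$ transfer matrix, together with justifying that maximal digon chains are well defined and disjoint, so that the fibre is a product over Ising edges. The chain-structure argument is a routine planar-map lemma: each digon has exactly two sides, so digons glue into paths or cycles, and cycles are excluded except in a map made entirely of digons. I will handle that degenerate configuration separately, as a formal-series identity, with the series converging $t$-adically.
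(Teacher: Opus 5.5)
Your route is the paper's own: contract maximal chains of digons, observe that the fibre over a fixed Ising map factorizes over its edges into alternating chains, and sum a geometric series per edge. Your fibre sum $\sum_{k\ge0}c^{-1}(Tc^{-1})^k=(c\,\mathrm{Id}-T)^{-1}$ is also correct. Order rows and columns as $(-,+)$, and let entry $(i,j)$ record an Ising edge between spin $i$ and the spin opposite to $j$. This gives exactly the layout $\begin{pmatrix} c_{+-} & c_{--}\\ c_{++} & c_{+-}\end{pmatrix}$, with $c_{+-}=\frac{c}{c^2-t_2\wt t_2}$, $c_{--}=\frac{t_2}{c^2-t_2\wt t_2}$ and $c_{++}=\frac{\wt t_2}{c^2-t_2\wt t_2}$.

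The step that fails is the final check you announce: that the inverse of the printed matrix reproduces these weights exactly. Your own rewriting $c\,(c^2\mathrm{Id}+cT')^{-1}$ (reading $T'=-T$) is $c$ times the inverse of $\begin{pmatrix} c^2 & -ct_2\\ -c\wt t_2 & c^2\end{pmatrix}$. So that inverse equals $c^{-1}$ times the true edge weights. No sign convention can absorb a factor $c$ per edge. In fact no sign flip is involved at all: the minus signs come directly from inverting $c\,\mathrm{Id}-T$. The cleanest confirmation is $t_2=\wt t_2=0$. Then there are no digons, the correspondence is the identity on properly bicoloured maps, and each edge weighs $c^{-1}$ in $\brak{P}$ and $c_{+-}$ in $I(P)$. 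Yet the printed matching forces $c_{+-}=c^{-2}$.

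So what your argument proves, and what is actually true, is $\brak{P}=I(P)$ under the matching $\begin{pmatrix} c & -t_2\\ -\wt t_2 & c\end{pmatrix}=\begin{pmatrix} c_{+-} & c_{--}\\ c_{++} & c_{+-}\end{pmatrix}^{-1}$, i.e. the printed matrix divided by $c$. State this normalization rather than trying to force agreement with the printed one. The paper's proof contains the same slip. Its displayed per-edge weights $\frac{c^2}{c^2-t_2\wt t_2}$, $\frac{ct_2}{c^2-t_2\wt t_2}$, $\frac{c\wt t_2}{c^2-t_2\wt t_2}$ are $c$ times the true fibre sums, and they do not invert to the printed matrix either.

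Two smaller points. First, on the Ising side you must allow a boundary bridge whose two visits carry equal letters, weighted $c_{++}$ or $c_{--}$. It is the image of an odd digon chain joining two boundary edges. The hypermap rule requiring distinct letters on a bridge is just the $k=0$ case of the parity rule. Second, closed cycles of inner digons cannot occur in a map with a boundary face: every vertex of such a cycle would have only those digons around it, so the cycle would be the whole map. The degenerate case you set aside therefore needs no separate treatment.
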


\begin{figure}[htp]
\centering
\includegraphics[width=\textwidth]{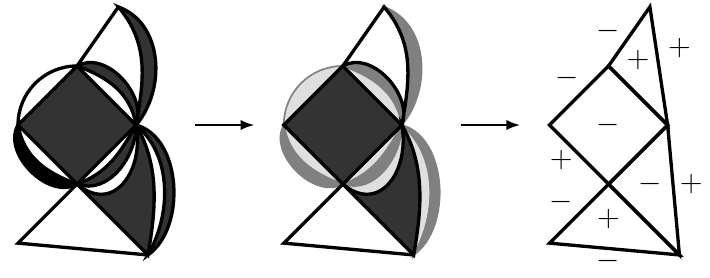}
\caption{Illustration of the many-to-one correspondence, from hypermaps to Ising-decorated maps, through the contraction of digons.}
\label{fig:bic-to-ising} 
\end{figure}

\begin{proof}
Starting from a hypermap $\mathfrak{m} \in  \textnormal{Bic}^{d,\widetilde{d}}(P)$, let us contract its digons (see Figure~\ref{fig:bic-to-ising}) and interpret the colors of the remaining faces as spins. This is a many-to-one correspondence from $\textnormal{Bic}^{d,\widetilde{d}}(P)$ to $\textnormal{I}_{d,\wt d}(P)$. If two hypermaps $\mathfrak{m}_1, \mathfrak{m}_2$  yield the same Ising map $\mathfrak{m}$, necessarily both $\mathfrak{m}_1$ and $\mathfrak{m}_2$ are obtained from $\mathfrak{m}$ by inserting on each edge a sequence of digons that alternate in color, and such that the initial and final digons have the color opposite of the face of $\mathfrak{m}$ that they are glued to. Therefore, for each edge of $\mathfrak{m}$, both the number of white and black digons in both sequences must have the same parity. This implies that
\[
\brak{P}=\sum_{\mathfrak{m}\in\textnormal{I}_{d,\wt d}(P)}t^{v(\mathfrak{m})}\prod_{3\leq i\leq d}t_i^{f_{+}^i(\mathfrak{m})}\prod_{3\leq j\leq \widetilde{d}}\widetilde{t}_j^{f_{-}^j(\mathfrak{m})}\left(\frac{c\wt t_2}{c^2-t_2\wt t_2} \right)^{e_{++}(\mathfrak{m})}\left(\frac{c t_2}{c^2-t_2\wt t_2} \right)^{e_{--}(\mathfrak{m})}\left(\frac{c^2}{c^2-t_2\wt t_2} \right)^{e_{+-}(\mathfrak{m})},
\]
which matches $I(P)$ provided that 
\[ 
\begin{pmatrix}
  c^2 & -ct_2 \cr
  -c\wt{t}_2 & c^2  \cr
 \end{pmatrix}
=
 \begin{pmatrix}
  c_{+-} & c_{--} \cr
  c_{++} & c_{+-}  \cr
 \end{pmatrix}^{-1}.
\]
\end{proof}

\subsection{Tutte decompositions and splitting procedure}
\label{sec:tutte as splitting}

We now prove a Tutte-type decomposition for the generating function $\brak{P}$ for any given boundary condition $P$. We call such a decomposition, a \emph{splitting procedure}, once again by analogy with matrix models, see for instance~\cite{eynard-formal-matrix,eynard}.

\begin{prop}
\label{prop:splitting-procedure}
Let $P$ be a word over $\{A,B\}$, then
\[
c\brak{PA} =- \brak{P\widetilde{V}'(B)} + \sum_{\substack{B \textnormal{ in } P \\\textnormal{s.t. }P=P_1 B P_2}}\brak{P_1}\brak{P_2}.
\]
\end{prop}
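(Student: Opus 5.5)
The plan is a standard Tutte (root-edge deletion) decomposition applied to the last letter of the word $PA$. Read the boundary condition $PA$ cyclically and distinguish the boundary edge $e$ coded by the final letter $A$. For a hypermap $\mathfrak{m}\in\textnormal{Bic}^{d,\widetilde d}(PA)$ there are exactly two possibilities for $e$: either it is incident on its inner side to an inner face $f$, or it is a boundary bridge. In the first case the boundary convention (a letter $A$ codes an edge whose inner neighbour has a prescribed colour) forces $f$ to be of the colour weighted by the $\widetilde t_j$. In the second case, condition 2 in the definition of boundary conditions forces the second visit of $e$ to be coded by a letter $B$ of $P$. I would prove the identity by giving a weight-preserving bijection in each case and then multiplying through by $c$.

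\emph{Case 1 (inner face).} Let $f$ have degree $j$. Delete $e$, so that $f$ merges with the outer face. The result is a hypermap whose boundary word is obtained from $PA$ by replacing $A$ with the $j-1$ other edges of $f$. These edges now see the outer face where $f$ used to be, and they see $f$'s colour from the inside, so they are coded by $B^{j-1}$ (the orientation convention is that the boundary is read clockwise). The new map is still connected because $e$ was not a bridge. Conversely, from any map in $\textnormal{Bic}(PB^{j-1})$ one recovers the original map by adding back an edge closing a face of degree $j$. The face-degree bounds are respected and colours remain proper. Deleting $e$ loses one edge and one face of degree $j$, so this case contributes $c^{-1}\sum_j \widetilde t_j\brak{PB^{j-1}}$. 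Since $\widetilde V'(B)=-\sum_j\widetilde t_j B^{j-1}$, this equals $-c^{-1}\brak{P\widetilde V'(B)}$.

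\emph{Case 2 (bridge).} The other side of $e$ is some occurrence of $B$ in $P$; write $P=P_1BP_2$ accordingly. Deleting $e$ disconnects the map into two components. One has boundary word $P_2$, read between the two visits of $e$. The other has boundary word $P_1$, read from the root around to the other visit. Each component keeps exactly one endpoint of $e$, so vertex weights factor correctly. Conversely, any pair of maps with boundaries $P_1$ and $P_2$ glue back uniquely along a bridge inserted at the corresponding boundary corners. This case contributes $c^{-1}\sum_{P=P_1BP_2}\brak{P_1}\brak{P_2}$.

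Summing the two cases and multiplying by $c$ gives the proposition. The main obstacle is bookkeeping, not combinatorics. First, I must check that the colour labels of the newly exposed edges are exactly $B^{j-1}$ with the right order, given the clockwise reading and the $A\mapsto\bullet$ convention. Second, I must handle degenerate components in Case 2. When $P_1$ or $P_2$ is empty, the corresponding component is a single vertex. It must be counted with weight $\brak{1}=t$, so the empty subword has to be interpreted as the monomial $1$ rather than as $\varnothing$. For example, with $P=B$ the single-edge map must contribute $t^2c^{-1}$. I would state this interpretation explicitly and check it on this smallest example.
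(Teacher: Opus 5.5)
Your proposal is correct and follows the paper's own proof. Both peel the edge coded by the final $A$ and split into two cases: the inner-face case, contributing $c^{-1}\widetilde t_j\brak{PB^{j-1}}$, and the bridge case, contributing $c^{-1}\brak{P_1}\brak{P_2}$, with an empty subword read as the monomial $1$, $\brak{1}=t$, as in the paper's remark. Your exponent $j-1$ is the right one: the paper's text writes $\brak{PB^k}$ and $\sum_k\widetilde t_kB^k=-\widetilde V'(B)$, which are index slips for $B^{k-1}$.
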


\begin{proof}
Consider a hypermap $\mathfrak{m}$ with boundary condition $PA$, contributing to 
\[
 \Big[t^{v}c^{-e}\prod_{2\leq i\leq d}t_i^{f_\circ^i}\prod_{2\leq j\leq \widetilde{d}}\widetilde{t}_j^{f_\bullet^j}\Big]\brak{PA},
\] and peel the edge associated to $A$ on the boundary of $\mathfrak{m}$  (see Figure~\ref{fig:splitting}). If it was incident to an inner black face of degree $k$, the map $\mathfrak{m}'$ obtained by removing that face has one less inner black face of degree $k$, one less edge, and $k-1$ more boundary edges of type $B$, that are inserted in the boundary condition where $A$ was. Thus, $\mathfrak{m}'$ contributes to 
\[
\Big[t^{v}c^{-e+1}\prod_{2\leq i\leq d}t_i^{f_\circ^i}\cdot\wt{t}_k^{f_\bullet^k-1}\prod_{j\neq k}\wt{t}_j^{f_\bullet^j}\Big]\brak{PB^k}.
\]
If $A$ was identified with a boundary edge of type $B$, then $\mathfrak{m}$ is made of two smaller maps $\mathfrak{m}_1$, $\mathfrak{m}_2$ connected by a bridge, and their respective boundary conditions $P_1, P_2$ must satisfy $P=P_1BP_2$. Moreover, their number of edges sum up to $e-1$, while all the inner faces of $\mathfrak{m}$ appear in either $\mathfrak{m}_1$ or $\mathfrak{m}_2$ . Therefore, the pair $(\mathfrak{m}_1$, $\mathfrak{m}_2)$ contributes to 
\[
  \Big[t^{v}c^{-e+1}\prod_{2\leq i\leq d}t_i^{f_\circ^i}\prod_{2\leq j\leq \widetilde{d}}\widetilde{t}_j^{f_\bullet^j}\Big]\brak{P_1}\brak{P_2}.
\]
Summing over all possible maps, and as 
\[
\sum_{2\leq k\leq \wt{d}}\wt{t}_kB^k=-\wt V'(B),
\]
we obtain the announced equation.
\end{proof}

\begin{figure}[htp]
\centering
\includegraphics[width=\textwidth]{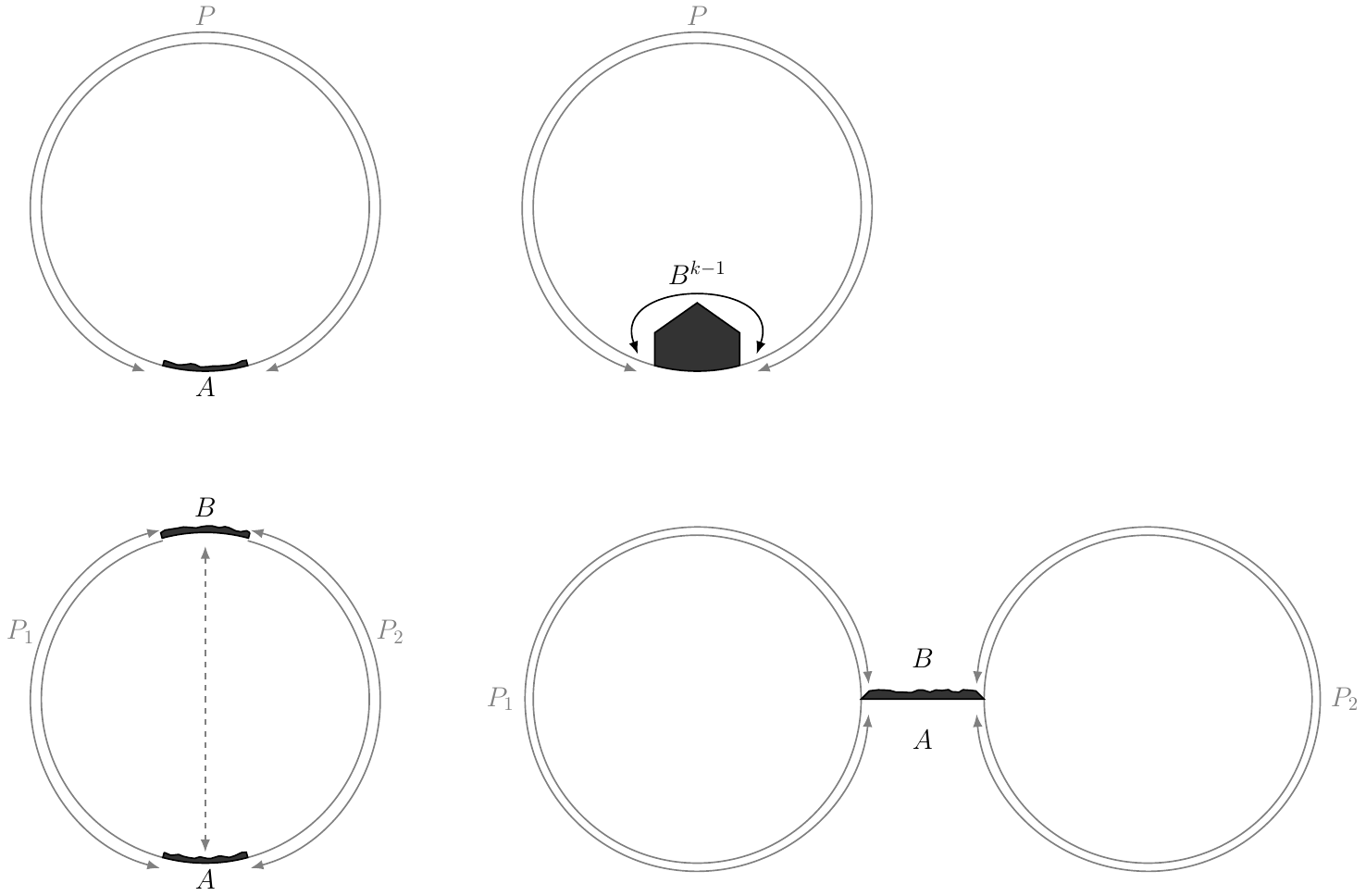}
\caption{A sketch of the splitting procedure: top left, the boundary edge of type $A$ that we peel. Top right, the case where it is adjacent to a black inner face. Bottom row, the case where it is identified with a boundary edge of type $B$.}
\label{fig:splitting} 
\end{figure}

From our definition of $\brak{\cdot}$, this extends linearly to any formal non-commutative power series in $A,B$: if $P=\sum_kc_k\w_k$, then
\begin{equation}
c\brak{PA} =\mathbf{-} \brak{P\widetilde{V}'(B)} + \sum_{k}c_k\sum_{\substack{B \textnormal{ in } \w_k \\\textnormal{s.t. }\w_k=P_1 B P_2}}\brak{P_1}\brak{P_2}.
\end{equation}

We also have the symmetric equation for the splitting of $\brak{PB}$.

\begin{rem}
Note that $P_1$ or $P_2$ in the sum can be $1$. Also, note that the sum can be empty (if there is no power of $B$ in $P$).
\end{rem}

\section{Master equation for the alternating boundary condition}
\label{sec:general-master-eq}

\subsection{Splitting equations}

We will now derive two equations on the function $M$, involving the other auxiliary generating functions, by applying the splitting procedure to various series.

\begin{prop}
The generating function $M$ satisfies the following two equations:
\begin{equation}
    c(y - Y)(x\widetilde{S}_+ - \widetilde{R}) = M\left(cx(\widetilde{V}'+cx-f_{10})+\widetilde{R}\omega\right) - c^2xf_{00} + x\widehat{P} + \widetilde{R}(-V'-cy+f_{01}),
\label{eq:first-equation-in-M}
\end{equation}
\begin{equation}
    M(\widehat{f}\omega + cxY) = -xR + \widehat{f}(W-f_{01}).
    \label{eq:second-equation-in-M}
\end{equation}
\end{prop}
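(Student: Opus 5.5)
Both equations should come from applying Proposition~\ref{prop:splitting-procedure} (and its mirror, which peels a final $B$) to suitable formal series built from $\frac{1}{x-A}$, $\frac{1}{\omega-BA}$ and $\frac{1}{y-B}$. The sums over split positions are then resummed into products of the generating functions of~\eqref{eq: auxiliary fcts def}. Throughout I will use three elementary facts. First, $\brak{\cdot}$ is invariant under cyclic rotation of the word, since re-rooting the boundary at another corner is a bijection. Second, there are the algebraic identities $\frac{A}{x-A}=\frac{x}{x-A}-1$ and $\frac{\omega}{\omega-BA}-1=BA\frac{1}{\omega-BA}=B\frac{1}{\omega-AB}A$. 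Third, a splitting at a letter inside a geometric series $\sum_k x^{-k-1}A^k$ or $\sum_k\omega^{-k-1}(BA)^k$ factorizes as a product of two such series (the usual ``derivative'' identity for resolvents).

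For \eqref{eq:second-equation-in-M}, I would start from $\omega M-W=\brak{\frac{1}{x-A}B\frac{1}{\omega-AB}A}$. Rotating cyclically, this is a word ending in $B$, and I apply the mirrored splitting, peeling that $B$ edge. The face term produces $-\brak{\cdots V'(A)}$. Rewriting $V'(A)=V'(x)-(V'(x)-V'(A))$ gives a $-V'(x)M$ contribution and a term equal to $R$ up to the factor coming from the leading $A$ (handled with $\frac{A}{x-A}=\frac{x}{x-A}-1$, which is where the factor $x$ in $-xR$ appears). The bridge terms split at an $A$ letter. Splits inside $\frac{1}{x-A}$ yield $W(x)\cdot M$ by the resolvent identity. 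Together with $-V'(x)M$ this gives $cY(x)M$, hence, after multiplying through by $x$, the $cxY$ term. Splits inside $\frac{1}{\omega-BA}$ yield products of $\widehat f+c$ with $M$ or with $f_{01}$. I would then collect terms, using $f_{00}=\widehat f+c$ and the definitions of $f_{01},R$, and check that the $c$'s cancel to leave $M(\widehat f\omega+cxY)=-xR+\widehat f(W-f_{01})$. It may be more economical to instead peel the $A$ edge of $\brak{\frac{1}{x-A}\frac{1}{\omega-BA}A}$; I would try both and keep the one whose bridge terms close on $\{M,W,R,f_{01},\widehat f\}$.

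For \eqref{eq:first-equation-in-M}, the presence of $\widetilde R,\widetilde S_+,\widehat P$ and $f_{10}$ indicates that one must split a word containing both $\frac{1}{x-A}$ and $\frac{1}{y-B}$ on either side of $\frac{1}{\omega-BA}$, peeling an $A$ so that $\widetilde V'(B)$ appears. I would proceed in three steps.
\begin{enumerate}
\item Write the relation obtained from splitting $\brak{\frac{1}{x-A}\frac{1}{\omega-BA}\frac{1}{y-B}A}$. Its $\widetilde V'(B)$ term is decomposed as $\widetilde V'(y)-(\widetilde V'(y)-\widetilde V'(B))$, producing $\widetilde V'$ times a three-resolvent function and a term of $\widetilde S_+$-type.
\item Split the companion word in which $\frac{1}{x-A}$ is replaced by $\frac{V'(x)-V'(A)}{x-A}$, so that $\widehat P$ appears. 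Its bridge terms through $\frac{1}{y-B}$ give the factor $\widetilde W(y)-\widetilde V'(y)=cX(y)$, or, after mirroring, $W$ and $cY$.
\item Eliminate the unknown three-resolvent function $\brak{\frac{1}{x-A}\frac{1}{\omega-BA}\frac{1}{y-B}}$ between these two relations. This elimination should produce the factor $c(y-Y)$ multiplying $x\widetilde S_+-\widetilde R$, while the remaining terms assemble into $M(cx(\widetilde V'+cx-f_{10})+\widetilde R\omega)$ and the lower-order terms.
\end{enumerate}

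I expect the main obstacle to be this second equation. The difficulty is choosing which words to split so that every bridge term closes on the listed functions, and doing the bookkeeping of the boundary pieces of type $\frac{1}{\omega-BA}$ cut at an interior letter. Each such cut produces an $A^iB^j$-dressed piece, which must be recognized as $f_{10}$, $f_{01}$ or $f_{00}$ times $M$. The first equation, by contrast, should be a direct computation.
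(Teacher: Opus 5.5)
\textbf{Equation \eqref{eq:second-equation-in-M}.} Your argument is essentially the paper's: it peels the $B$ of $\brak{\frac{1}{x-A}\frac{1}{\omega-BA}B}$ and uses $x\brak{\frac{1}{x-A}\frac{1}{\omega-BA}B}=f_{01}+\omega M-W$. Your bookkeeping of $f_{01}$ is off, though. In your word, the cuts inside $\frac{1}{x-A}$ give $xWM$, and the cuts inside $\frac{1}{\omega-AB}$ give $f_{00}(\omega M-W)$. The $f_{01}$ comes instead from the face term. There the extra leading $A$ leaves, besides $-xV'(x)M+xR$, a residual $\brak{V'(A)\frac{1}{\omega-BA}}$. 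No definition identifies this term. You need one more splitting, of $cf_{01}=c\brak{\frac{1}{\omega-BA}B}$, which gives $\brak{V'(A)\frac{1}{\omega-BA}}=(f_{00}-c)f_{01}=\widehat f f_{01}$. Then $-\widehat f(\omega M-W)=cxYM+xR+\widehat f f_{01}$, which is \eqref{eq:second-equation-in-M}.

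\textbf{Equation \eqref{eq:first-equation-in-M}: a genuine gap.} Both of your splittings peel an $A$, so all bridges cut at $B$ letters. Set $T=\brak{\frac{1}{x-A}\frac{1}{\omega-BA}\frac{1}{y-B}}$. Step 1 gives $c(x-X(y))\,T=\widetilde S_++c\widetilde M+\frac{M}{y}\bigl(f_{10}+\omega\widetilde M-\widetilde W\bigr)$, where the last term comes from $\brak{A\frac{1}{\omega-BA}\frac{1}{y-B}}$. Step 2 gives a relation for $S_+$, not for $T$, again with the factor $c(x-X(y))$. It involves $\widehat P$, $R$, $\widetilde M$ and the new unknown $\brak{V'(A)\frac{1}{\omega-BA}\frac{1}{y-B}}$. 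Neither relation contains $W(x)$ or $f_{01}$, both of which occur in \eqref{eq:first-equation-in-M}. Both contain $\widetilde M$, which \eqref{eq:first-equation-in-M} does not. So no elimination between them can yield \eqref{eq:first-equation-in-M}; the factor you can produce is $c(x-X(y))$, never $c(y-Y(x))$.

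\textbf{The missing idea.} Evaluate the single series $\brak{\frac{1}{x-A}\frac{1}{\omega-BA}B\frac{\widetilde V'(y)-\widetilde V'(B)}{y-B}}$ in two ways.

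First, algebraically: $B\frac{1}{y-B}=\frac{y}{y-B}-1$ turns it into $y\widetilde S_+-\widetilde V'(y)M+\brak{\frac{1}{x-A}\frac{1}{\omega-BA}\widetilde V'(B)}$. The $A$-peeling of $\brak{\frac{1}{x-A}\frac{1}{\omega-BA}A}$ then gives this last term as $-c(xM-f_{00})+Mf_{10}$.

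Second, combinatorially: peel the $B$ at the end of the $B$-block, next to $\frac{1}{x-A}$. The face term is $-V'(x)\widetilde S_++\widehat P$. The cuts inside $\frac{1}{x-A}$ give $W\widetilde S_+$. The cuts inside $\frac{1}{\omega-BA}$ give $\brak{\frac{1}{x-A}\frac{1}{\omega-BA}B}\widetilde R=\frac{\widetilde R}{x}(f_{01}+\omega M-W)$.

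Two choices make this work. Peeling a $B$ is what brings in $V'(x)$ and $W(x)$, hence the factor $cy+V'-W=c(y-Y)$ in front of $\widetilde S_+$. Using the polynomial $\frac{\widetilde V'(y)-\widetilde V'(B)}{y-B}$ rather than the resolvent $\frac{1}{y-B}$ makes the cut inside $\frac{1}{\omega-BA}$ leave $\widetilde R$ instead of $\widetilde M$, and keeps everything polynomial in $y$.

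To finish, equate $c$ times the first expression with the second and multiply by $x$. Then write $\widetilde R(f_{01}-W)=\widetilde R(f_{01}-V'-cy)+c(y-Y)\widetilde R$; this gives \eqref{eq:first-equation-in-M}.
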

 
\begin{proof}
Let us first write
\begin{align*}
    &\brak{\frac{1}{x-A}\frac{1}{\omega - BA}B\frac{\widetilde{V}'(y)-\widetilde{V}'(B)}{y-B}}= \brak{\frac{1}{x-A}\frac{1}{\omega - BA}(\widetilde{V}'(y)-\widetilde{V}'(B))\left(\frac{y}{y-B}-1\right)} \\
    &= y\brak{\frac{1}{x-A}\frac{1}{\omega - BA}\frac{\widetilde{V}'(y)-\widetilde{V}'(B)}{y-B}} - \brak{\frac{1}{x-A}\frac{1}{\omega - BA}(\widetilde{V}'(y)-\widetilde{V}'(B))} \\
    &= y\widetilde{S}_+ - \widetilde{V}'M + \brak{\frac{1}{x-A}\frac{1}{\omega - BA}\widetilde{V}'(B)} \\
    &=  y\widetilde{S}_+ - \widetilde{V}'M - \left(c\brak{\frac{1}{x-A}\frac{1}{\omega - BA}A} - \brak{\frac{1}{x-A}\frac{1}{\omega - BA}}\brak{\frac{1}{\omega - BA}A} \right)\\
&\gr{\Bigg[\uparrow\textnormal{Splitting of }\brak{\frac{1}{x-A}\frac{1}{\omega - BA}\mathbf{A}}\Bigg]}\\ 
    &= y\widetilde{S}_+ - \widetilde{V}'M - c\brak{\left(\frac{x}{x-A}-1\right)\frac{1}{\omega - BA}} + Mf_{10} = y\widetilde{S}_+ - \widetilde{V}'M - cxM + cf_{00} + Mf_{10}.
\end{align*}

On the other hand:
\begin{align*}
 &c\brak{\frac{1}{x-A}\frac{1}{\omega - BA}B\frac{\widetilde{V}'(y)-\widetilde{V}'(B)}{y-B}} \\
    &=\brak{\frac{1}{x-A}\frac{1}{\omega - BA}V'(A)\frac{\widetilde{V}'(y)-\widetilde{V}'(B)}{y-B}} + \brak{\frac{1}{x-A}}\brak{\frac{1}{x-A}\frac{1}{\omega - BA}\frac{\widetilde{V}'(y)-\widetilde{V}'(B)}{y-B}} \\
    &\hspace{4em}+ \brak{\frac{1}{x-A}\frac{1}{\omega - BA}B}\brak{\frac{1}{\omega - BA}\frac{\widetilde{V}'(y)-\widetilde{V}'(B)}{y-B}} \\
&\gr{\Bigg[\uparrow\textnormal{Splitting of } c\brak{\frac{1}{x-A}\frac{1}{\omega - BA}\mathbf{B}\frac{\widetilde{V}'(y)-\widetilde{V}'(B)}{y-B}}\Bigg]}\\ 
    &= -V'(x)\widetilde{S}_++\widehat{P} + W\widetilde{S}_+ +\brak{\frac{1}{x-A}\frac{1}{\omega - BA}B}\widetilde{R} \\
    &= -V'(x)\widetilde{S}_++\widehat{P}+W\widetilde{S}_++\brak{\frac{1}{x}\left(1+\frac{A}{x-A}\right)\frac{1}{\omega - BA}B}\widetilde{R} \\
    &= -V'(x)\widetilde{S}_++\widehat{P}+W\widetilde{S}_++\frac{\widetilde{R}}{x}\left(f_{01} + \brak{\frac{1}{x-A}\frac{1}{\omega - BA}BA}\right) \\
    &= -V'(x)\widetilde{S}_++\widehat{P}+W\widetilde{S}_++\frac{\widetilde{R}}{x}\left(f_{01} + \brak{\frac{1}{x-A}\left(\frac{\omega}{\omega - BA}-1\right)}\right) \\
    &= -V'(x)\widetilde{S}_++\widehat{P}+W\widetilde{S}_++\frac{\widetilde{R}}{x}\left(f_{01} + \omega M - W\right).
\end{align*}

Combining the two previous equations, we obtain:
\begin{align*}
    c\left(y\widetilde{S}_+ - \widetilde{V}'M - cxM + cf_{00} + Mf_{10}\right) = -V'(x)\widetilde{S}_++\widehat{P}+W\widetilde{S}_++\frac{\widetilde{R}}{x}\left(f_{01} + \omega M - W\right),
\end{align*}
which can be rewritten as
\begin{align*}
    \widetilde{S}_+(cy + V' - W) = M\left(c\widetilde{V}'+c^2x-cf_{10}+\frac{\widetilde{R}\omega}{x}\right) - c^2f_{00} + \widehat{P} + \frac{\widetilde{R}}{x}(f_{01}-W),
\end{align*}
which yields equation~\eqref{eq:first-equation-in-M}. 

Let us now turn to~\eqref{eq:second-equation-in-M}. We have:
\begin{align*}
&c\brak{\frac{1}{x-A}\frac{1}{\omega-BA}B} \\
&= -\brak{\frac{1}{x-A}\frac{1}{\omega-BA}V'(A)} + \brak{\frac{1}{x-A}}\brak{\frac{1}{x-A}\frac{1}{\omega-BA}} + \brak{\frac{1}{x-A}\frac{1}{\omega-BA}B}\brak{\frac{1}{\omega-BA}}\\
&\gr{\Bigg[\uparrow\textnormal{Splitting of } c\brak{\frac{1}{x-A}\frac{1}{\omega-BA}\mathbf{B}}\Bigg]}\\
&=- V'M + R + WM +\brak{\frac{1}{x-A}\frac{1}{\omega-BA}B}f_{00},
\end{align*}
so that
\begin{align*}
    \brak{\frac{1}{x-A}\frac{1}{\omega-BA}B} = -\frac{1}{\widehat{f}}(R+cYM).
\end{align*}
On the other hand,
\begin{align*}
    &\brak{\frac{1}{x-A}\frac{1}{\omega-BA}B} = \brak{\frac{1}{x}\left(1+\frac{A}{x-A}\right)\frac{1}{\omega-BA}B} = \frac{1}{x}\left(f_{01} + \brak{\frac{1}{x-A}\frac{1}{\omega-BA}BA}\right) \\
    &= \frac{1}{x}\left(f_{01} + \brak{\frac{1}{x-A}\left(\frac{\omega}{\omega-BA}-1\right)}\right)= \frac{1}{x}\left(f_{01}+\omega M - W\right).
\end{align*}

Combining the two previous equations, we obtain:
\begin{equation*}
    x(R + cYM) = -\widehat{f}(f_{01}+\omega M - W),
\end{equation*}
which yields~\eqref{eq:second-equation-in-M}.

\end{proof}

From equations~\eqref{eq:first-equation-in-M} and~\eqref{eq:second-equation-in-M}, we deduce the following:
\begin{prop}
Let 
\begin{equation}
Q(x,y) :=-\frac{\widehat{f}}{c}\Big(V'+cy -f_{01}-\frac{xR}{\widehat{f}}\Big)\Big(\widetilde{V}'+cx -f_{10}-\frac{y\widetilde{R}}{\widehat{f}}\Big)+ \frac{1}{c^2}(\widehat{f}\omega +cxy)\big(c^2f_{00}-\widehat{P} + \frac{R\widetilde{R}}{\widehat{f}}\big).
\label{eq:Q}
\end{equation}
Then $Q(x,Y(x))=0$.
\label{prop:Q=0}
\end{prop}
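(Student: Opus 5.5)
The plan is to specialize the first splitting equation \eqref{eq:first-equation-in-M} at $y=Y(x)$, which kills its left-hand side. Then I eliminate $M$ using \eqref{eq:second-equation-in-M}, and recognize the result as $-c^2x\,Q(x,Y(x))$. To keep the substitution legitimate, I would first check that every generating function involved ($\widetilde{S}_+$, $\widehat{P}$, $\widetilde{R}$, $M$, \dots) is a formal series in which $y$ enters polynomially or as a power series in $y^{-1}$. Then $y=Y(x)$, a Laurent series in $x^{-1}$, can be substituted term by term. The factor $c(y-Y)$ on the left of \eqref{eq:first-equation-in-M} then vanishes. The only terms still carrying $y$ are those in $\widehat{P}$ and the explicit $-cy$, and these simply become evaluated at $Y(x)$.

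To avoid dividing by anything, I would multiply the specialized equation by $D:=\widehat f\omega+cxY$. I then replace $DM$ by the right-hand side of \eqref{eq:second-equation-in-M}. The key observation is that $W=V'+cY$ by the definition of $Y$. Hence
\[
DM=\widehat f\, a,\qquad a:=V'+cY-f_{01}-\frac{xR}{\widehat f},
\]
which is exactly the first factor of $Q$ at $y=Y$. In the same spirit, I set $b:=\widetilde V'+cx-f_{10}-\frac{Y\widetilde R}{\widehat f}$ and rewrite the coefficient of $M$ as
\[
cx(\widetilde V'+cx-f_{10})+\widetilde R\omega = cx\,b+\frac{\widetilde R}{\widehat f}(cxY+\widehat f\omega)=cx\,b+\frac{\widetilde R D}{\widehat f}.
\]
I also write the last term as $\widetilde R(-V'-cY+f_{01})=-\widetilde R\bigl(a+\frac{xR}{\widehat f}\bigr)$.

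After multiplying by $D$, the equation becomes
\[
0=\widehat f a\Bigl(cxb+\frac{\widetilde R D}{\widehat f}\Bigr)+D\Bigl(-c^2xf_{00}+x\widehat P-\widetilde R a-\frac{xR\widetilde R}{\widehat f}\Bigr).
\]
The two $a\widetilde R D$ terms cancel, leaving
\[
cx\widehat f ab - xD\Bigl(c^2f_{00}-\widehat P+\frac{R\widetilde R}{\widehat f}\Bigr)=0.
\]
Dividing by $-c^2x$ yields exactly $Q(x,Y(x))=0$ as defined in \eqref{eq:Q}.

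The algebra is routine. The one point needing care is the formal validity of the manipulations. I would check that the substitution $y=Y(x)$ in $\widetilde S_+$ and $\widehat P$ converges as formal series. I would also check that the divisions by $\widehat f$ appearing in $Q$ are legitimate, since $\widehat f=-c+O(\omega^{-1})$ is invertible. Finally, division by $x$ is harmless in Laurent series in $x^{-1}$. Multiplying by $D$ rather than dividing by it sidesteps any question about the invertibility of $\widehat f\omega+cxY$.
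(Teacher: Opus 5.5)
Your proof is correct and follows essentially the same route as the paper: specialize \eqref{eq:first-equation-in-M} at $y=Y(x)$ to kill its left-hand side, eliminate $M$ with \eqref{eq:second-equation-in-M} using $W=V'+cY$, and divide by $-c^2x$ to recognize $Q(x,Y(x))$. The only difference is that you multiply through by $\widehat f\omega+cxY$ rather than solving both equations for $M$ and cross-multiplying, which avoids the paper's implicit divisions by $\widehat f\omega+cxY$ and by $cx(\widetilde V'+cx-f_{10})+\widetilde R\omega$; one small correction is that $\widetilde V'$ and $\widetilde R$ (not only $\widehat P$ and $-cy$) also depend on $y$ and are evaluated at $Y(x)$, which your definition of $b$ already assumes.
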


\begin{proof}
Substituting $y$ by $Y(x)=-V'(x)+O(x^{-1})$ in~\eqref{eq:first-equation-in-M}, the left hand side vanishes, so that the right hand side is equal to zero as well, giving one expression for $M$:
\begin{equation*}
M = \frac{c^2xf_{00} - x\widehat{P} + \widetilde{R}(V'+cy-f_{01})}{cx(\widetilde{V}'+cx-f_{10})+\widetilde{R}\omega}\lvert_{y=Y(x)}. 
\end{equation*}

Equation~\eqref{eq:second-equation-in-M} gives a second expression for $M$:
\begin{equation*}
M=\frac{-xR + \widehat{f}(+V'+cY-f_{01})}{\widehat{f}\omega + cxY}.
\end{equation*}

Therefore, the following polynomial in $x$ and $y$
\begin{equation*}
\left(c^2xf_{00} - x\widehat{P} + \widetilde{R}(V'+cy-f_{01})\right)\left(\widehat{f}\omega + cxy\right) - \left(cx(\widetilde{V}'+cx-f_{01})+\widetilde{R}\omega\right)\left(-xR + \widehat{f}(V'+cy-f_{01})\right)
\end{equation*}
vanishes for $y=Y(x)$. Dividing it by $xc^2$, we get:
\begin{equation*}
\begin{aligned}
\frac{1}{c^2}\Big(&- c \widehat{f} (V'+cy-f_{01})(\widetilde{V}'+cx -f_{10}) + cxR(\widetilde{V}'+cx -f_{01}) \\
&+ cy\widetilde{R}(V' +cy -f_{01}) +R\widetilde{R}\omega + (\widehat{f}\omega +cxy)(c^2f_{00}-\widehat{P})\Big),
\end{aligned}
\end{equation*}
which is equal to $Q(x,y)$.
\end{proof}

\subsection{Algebraicity of $\wh{f}$}
\label{sec: algebraicity-2-strats}

In this section, we establish a strategy to derive an algebraic equation on $\wh{f}(\omega)$ from Proposition~\ref{prop:Q=0}, and compare it with the kernel method that is detailed in Appendix~\ref{sec:appendix-alg-by-bmj}.

\begin{prop}
We have
\begin{equation}
Q(x,y)=E(x,y).
\label{eq:Q=E}
\end{equation}
\label{prop:Q=E}
\end{prop}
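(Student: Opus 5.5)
The introduction already indicates the route: show that $Q$ and $E$ are polynomials in $(x,y)$ with the same Newton polygon and the same leading coefficient, that both vanish on $(x,Y(x))$, and that $E$ is irreducible. Then $Q=\lambda E$ for a constant $\lambda$, and comparing one coefficient gives $\lambda=1$.

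\textbf{Step 1: $Q$ is polynomial in $x,y$ of the same shape as $E$.} First I would check that $R(x,\omega)$ is a polynomial in $x$ of degree $d-2$, since $\frac{V'(x)-V'(A)}{x-A}$ is a polynomial in $x,A$. Its leading coefficient is $-t_d$ times $\brak{\frac{1}{\omega-BA}}=\wh f+c$. Likewise $\wt R$ is a polynomial in $y$ of degree $\wt d-2$, and $\wh P$ is a polynomial of bidegree $(d-2,\wt d-2)$. Next I would expand \eqref{eq:Q}. The first product has top monomials $-\frac{\wh f}{c}V'(x)\wt V'(y)$, together with the corrections $\frac{xR}{\wh f}$ and $\frac{y\wt R}{\wh f}$, which are of the same degree in $x$ as $V'$ and in $y$ as $\wt V'$. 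The second term $\frac{1}{c^2}cxy\,(c^2f_{00}-\wh P+R\wt R/\wh f)$ contributes monomials up to $x^{d-1}y^{\wt d-1}$. Collecting the coefficient of $x^{d-1}y^{\wt d-1}$ should give
\[
\frac{t_d\wt t_{\wt d}}{c}\Big(-\wh f+\frac{(\wh f+c)^2}{\wh f}-\frac{(\wh f+c)^2}{\wh f}+(\wh f+c)\Big)=t_d\wt t_{\wt d},
\]
with signs to be checked carefully. This matches the coefficient of $x^{d-1}y^{\wt d-1}$ in $E$ from \eqref{eq:E-def}, namely $V'\wt V'$, which is $t_d\wt t_{\wt d}x^{d-1}y^{\wt d-1}$. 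I would also check that the remaining extreme monomials $x^{d}$, $y^{\wt d}$, $x^{d-1}y$ and $xy^{\wt d-1}$ are absent or agree. In $E$, the monomial $x^{d-1}\cdot cx=cx^d$ arises from $V'(x)\cdot cx$, and in $Q$ the analogous term is $-\frac{\wh f}{c}V'\cdot cx$ plus $\frac{xR}{\wh f}\cdot cx$ corrections, which again telescope using the leading coefficient $\wh f+c$ of $R$. So $Q$ and $E$ have the same support polygon, the triangle-like region with vertices $(d,0)$, $(0,\wt d)$ and $(d-1,\wt d-1)$, and the same coefficients on its boundary.

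\textbf{Step 2: deduce equality.} By Proposition~\ref{prop:Q=0}, $Q(x,Y(x))=0$, and $E(x,Y(x))=0$ by the characterisation of $Y$ recalled in Section~\ref{sec:reminder-mono}. The curve $E=0$ is rational and irreducible, since it is parametrized by \eqref{eq:xy-param} and $x(z)$ has degree $\wt d+1$ as a map from the $z$-sphere. Its $y$-degree is $\wt d$, however, so I would instead use the Newton-polygon degrees. Because $Y(x)$ is a branch of the irreducible curve $E$, $E$ divides $Q$ in $\K[x,y]$, where $\K$ is the coefficient field, containing $\omega$ and the series $\wh f,f_{ij},\dots$. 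The polynomials $Q$ and $E$ have identical Newton polygons, so the quotient has a Newton polygon reduced to a point. It is therefore a monomial, and since $Q$ and $E$ share their vertex coefficients it equals $1$.

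The step I expect to be the main obstacle is the justification of irreducibility of $E$ over the large field $\K$, equivalently the justification of the divisibility. The alternative I would try first avoids irreducibility. Writing $Q-E=\sum_{k}q_k(x)y^k$, it has strictly smaller support. Substituting the parametrization $x(z),y(z)$, the function $(Q-E)(x(z),y(z))$ vanishes as a Laurent polynomial in $z$. A monomial count on the interior of the polygon, using $x\sim\gamma z$ at $z=\infty$ and $y\sim\gamma/z$ at $z=0$ together with the pole orders of $x$ and $y$ at $0$ and $\infty$, would show that distinct remaining monomials $x^iy^j$ have linearly independent images unless the coefficients vanish. This is the standard argument that the Newton polygon of the minimal polynomial equals the polygon determined by the pole orders of the parametrization.
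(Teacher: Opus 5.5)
This is essentially the paper's argument: irreducibility of $E$ (which the paper also only asserts) gives $E\mid Q$, equal bidegrees (your Newton polygons) force the quotient to be a constant, and one extreme coefficient fixes that constant to $1$. The paper compares $[y^{\wt d}]$, while you compare $[x^{d-1}y^{\wt d-1}]$.

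One correction: you only need $\mathrm{supp}\,Q$ to lie in the Newton polygon of $E$ (the quadrilateral with vertices $(0,0)$, $(d,0)$, $(d-1,\wt d-1)$, $(0,\wt d)$), not equality of all boundary coefficients. The coefficients of $1$, $x^i$, $y^j$ with $i\le d-2$, $j\le \wt d-2$ lie on that boundary, but their equality is exactly the content of the relations $C_{i,0}$, $C_{0,j}$ of Theorem~\ref{thm:algebraicity}, so it cannot be checked by hand beforehand.
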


\begin{proof}
 Both $Q(x,y)$ and $E(x,y)$ are polynomials in $\Q(c)[t_i,\wt t_j,\omega,f_{ij}][[t]][x,y]$, and both of them have degree $d$ in $x$ and $\wt d$ in $y$. Moreover, both of them vanish when $y$ is substituted by $Y(x)$ (and likewise when $x$ is substituted by $X(y)$, by symmetry). Since $E$ is irreducible (this is clear from the fact that the associated algebraic curve $\{(x(z),y(z))\, | \, z \in \mathbb{CP}^1\}$ is connected), necessarily this implies that $Q$ is equal to $E$ up to a multiplicative factor in $\QQ(c,t_i,\wt t_j,\omega,f_{ij})((t))$. To identify this proportionality constant, let us look at the highest degree coefficient in $y$ in both polynomials:
\begin{itemize}
\item the highest degree monomial in $y$ in $E$ comes from the term $cy\widetilde{V}'$ and is equal to $-c\widetilde{t}_{\widetilde{d}}y^{\tilde{d}}$.
\item The highest degree monomial in $y$ in $Q$ comes from the terms $-\widehat{f}y\widetilde{V}'+\widetilde{R}y^2$, and is equal to $\widehat{f}\widetilde{t}_{\widetilde{d}}y^{\widetilde{d}} - f_{00}\widetilde{t}_{\widetilde{d}}y^{\widetilde{d}} =-c\widetilde{t}_{\widetilde{d}}y^{\tilde{d}}$ as well.
\end{itemize}
\vspace{-1em}\end{proof}

Using Proposition~\ref{prop:Q=E}, we can prove our general algebraicity result:

\begin{thm}
\label{thm:algebraicity}
The coefficients of monomials in $x$ and $y$  of the polynomial $Q(x,y)-E(x,y)$ induce a system of $(d-1)(\wt d -1)$ polynomials $C_{i,j} \in \Q(c)[t_i,\wt t_j,\omega][[t]][g_{0,0}, \dots, g_{d-2,0},g_{1,1}, \dots, g_{d-2,\wt d -2}], \, 0 \leq i \leq d-2,0\leq j\leq \wt{d}-2$, that satisfy $C_{i,j}(f_{0,0}, \dots, f_{d-2,0},f_{1,1}, \dots, f_{d-2,\wt d -2})=0$ for all $i,j$, and whose Jacobian determinant $\det J=\det(\frac{\partial C_{i,j}}{\partial g_{k\ell}})$ evaluated at $(f_{0,0},\dots,f_{d-2,\wt{d}-2})$ is a non-zero element of $\Q(c)[t_i,\wt t_j,\omega]$. Consequently, all the series $f_{ij}$, and in particular $\wh{f}=f_{00}-c$, are algebraic over $\QQ(c,t_i,\wt t_j,\omega)((t))$.
\end{thm}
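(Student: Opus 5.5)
The plan is to make $Q(x,y)$ of~\eqref{eq:Q} completely explicit as a polynomial in $x,y$. Its coefficients are polynomials in $\omega$, $c^{\pm1}$, the $t_i,\wt t_j$, and finitely many of the series $f_{ij}$. Then we read off the coefficients of $Q-E$ and apply the Jacobian criterion for algebraicity.

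\textbf{Step 1: expand the auxiliary functions in terms of the $f_{ij}$.} Since $\frac{V'(x)-V'(A)}{x-A}=-\sum_{i}t_i\sum_{a+b=i-2}x^aA^b$, we have $R(x,\omega)=-\sum_i t_i\sum_{a+b=i-2}x^a f_{b0}$, and symmetrically for $\wt R$ with the $f_{0b}$. Likewise
\[
\wh P=\sum_{i,j}t_i\wt t_j\sum_{a+b=i-2}\ \sum_{a'+b'=j-2}x^ay^{a'}f_{b b'}.
\]
The products $R\wt R/\wh f$ and $xR/\wh f$ bring in the denominator $\wh f$. Multiplying through by $\wh f$ (or noting the cancellations in the expanded form obtained at the end of the proof of Proposition~\ref{prop:Q=0}) gives an honest polynomial identity. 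So $\wh f\,(Q-E)=0$ is a polynomial system in $\omega$, $c$, $t_i,\wt t_j$ and the unknowns $g_{k\ell}$ standing for the $f_{k\ell}$ with $k\le d-2$, $\ell\le\wt d-2$.

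\textbf{Step 2: reduce the unknowns.} The splitting equations of Proposition~\ref{prop:splitting-procedure} relate $f_{k\ell}$ to $f_{k+1,\ell+1}$. Applied to $A^k\frac{1}{\omega-BA}B^\ell$ with $\frac{BA}{\omega-BA}=\frac{\omega}{\omega-BA}-1$, they give $f_{k+1,\ell+1}=\omega f_{k\ell}-D_{k\ell}$. The $D_{k\ell}$ are mixed-boundary quantities; I expect them to be expressible via the monochromatic data $\alpha_k,\beta_k,\gamma$, hence algebraic over the base field. This identification should leave exactly the list $g_{0,0},\dots,g_{d-2,0},g_{1,1},\dots,g_{d-2,\wt d-2}$ as independent unknowns, with $f_{0\ell}$ expressed through the mirror relations. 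A count of the monomials $x^iy^j$, $0\le i\le d-2$, $0\le j\le\wt d-2$, then gives $(d-1)(\wt d-1)$ equations $C_{i,j}$, matching the number of unknowns. The remaining coefficients, of top degree in $x$ or $y$, either vanish identically or are the extra constraints already checked in Proposition~\ref{prop:Q=E}.

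\textbf{Step 3: the Jacobian is nonzero (main obstacle).} I will show that $\det J$ evaluated at the true series is nonzero. The first idea is a triangularity argument. Order the unknowns $g_{k\ell}$ and equations $C_{i,j}$ compatibly, pairing $g_{k\ell}$ with the coefficient of $x^{d-2-k}y^{\wt d-2-\ell}$. The $\wh P$ term contributes $t_d\wt t_{\wt d}\,g_{k\ell}$ to that coefficient, multiplied by $\omega$ via the $\wh f\omega$ prefactor. I would then check that the other contributions of $g_{k\ell}$ land only in lower-order monomials. For the diagonal entries it is easier to specialize at $t=0$: there every $f_{k\ell}$ reduces to its leading term, either $0$ or $\omega^{-1}$-type contributions from the vertex map, so the matrix becomes explicit and triangular with diagonal entries $\pm\omega\,t_d\wt t_{\wt d}/c^2$ up to units. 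A nonzero determinant at $t=0$ forces $\det J\neq0$ in $\Q(c)[t_i,\wt t_j,\omega][[t]]$. The difficulty is controlling the quadratic cross terms coming from the $\wh f$-weighted product. If triangularity fails, I would instead specialize $t_k,\wt t_k$ to a single top-degree term each to decouple those terms.

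\textbf{Step 4: conclude algebraicity.} With $\det J\neq0$, the implicit function theorem / Jacobian criterion (the point is an isolated solution of a square polynomial system, hence lies in a zero-dimensional component) shows that each $f_{k\ell}$ is algebraic over $\QQ(c,t_i,\wt t_j,\omega)((t))$. This holds in particular for $\wh f=f_{00}-c$.
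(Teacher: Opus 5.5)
Your overall frame matches the paper's: expand $Q$ in the $f_{k\ell}$, take the $(d-1)(\wt d-1)$ coefficients $C_{i,j}$ as a square system in all $g_{k\ell}$ with $0\le k\le d-2$, $0\le\ell\le\wt d-2$, and invoke the Jacobian criterion. The gap is that $\det J\neq0$, the one step carrying the content of the theorem, is not established, and the triangular structure you propose for it does not exist.

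\textbf{Why your pairing fails.} With $g_{k\ell}\leftrightarrow C_{d-2-k,\wt d-2-\ell}$, the would-be diagonal entry is $-\frac{\omega\wh f}{c^2}t_d\wt t_{\wt d}$, plus $-\frac1c t_{d-1}\wt t_{\wt d-1}$ when $k<d-2$ and $\ell<\wt d-2$. Since $\wh f=-c+O(1)$, this is of order $c^{-1}$, not $\omega t_d\wt t_{\wt d}/c^2$. Moreover, through the term $-\frac{xy}{c}\wh P$ of $Q$, the same unknown (for $k,\ell\ge1$) also enters the \emph{higher} coefficient $C_{d-1-k,\wt d-1-\ell}$ with coefficient exactly $-\frac1c t_d\wt t_{\wt d}$. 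Hence $C_{i,j}$ and $C_{i-1,j-1}$ each involve the other's partner, and no ordering makes $J$ triangular with this pairing. The unknowns $g_{k0},g_{0\ell},g_{00}$ also do not enter only through $\wh P$. They enter through $R$, $\wt R$, the factors $V'+cy-f_{01}$ and $\wt V'+cx-f_{10}$, and $\wh f=g_{00}-c$. This produces entries of order $c$: $-c\omega$ for $g_{00}$ in $C_{0,0}$, and $\wh f$ for $g_{10}$ in $C_{0,1}$ and for $g_{01}$ in $C_{1,0}$. Your fallback specialization is announced but not carried out, so nothing in the proposal shows $\det J\neq0$.

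\textbf{The missing idea} is the paper's valuation argument in $c$. Every $f_{k\ell}$ with $(k,\ell)\neq(0,0)$ is $O(c^{-1})$, because its maps have at least one edge, while $\wh f=-c+t/\omega+O(c^{-1})$. So $\det J$ is a Laurent series in $c$ bounded above, and it suffices to find a permutation that uniquely attains the top power of $c$ with nonzero coefficient. For $k,\ell\ge1$ the right pairing is the extremal one, $g_{k\ell}\leftrightarrow C_{d-1-k,\wt d-1-\ell}$. This block is anti-triangular because $g_{k\ell}$ only enters $C_{i,j}$ with $i+k\le d-1$ and $j+\ell\le\wt d-1$.

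Your $t=0$ specialization is legitimate and a genuine simplification on top of this. At $t=0$ every $f_{k\ell}$ vanishes, since even the vertex map carries weight $t$, so nothing ``$\omega^{-1}$-type'' survives. Then $\wh f=-c$, the quadratic cross terms drop out, and $J$ becomes explicit; for instance its $g_{00}$-column is $-c\omega+\frac{\omega}{c}\frac{V'(x)\wt V'(y)}{xy}$. You still have to do the $c$-bookkeeping on it, including the order-$c$ entries $\partial C_{0,1}/\partial g_{10}=\partial C_{1,0}/\partial g_{01}=\wh f+O(c^{-1})$. These are overlooked by the paper's own list of leading orders, which puts them at $O(c^{-1})$. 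For $d=\wt d=3$ one gets $\det J|_{t=0}=-\omega t_3\wt t_3c^2+O(1)$, coming from $g_{00}\mapsto C_{0,0}$, $g_{10}\mapsto C_{0,1}$, $g_{01}\mapsto C_{1,0}$, $g_{11}\mapsto C_{1,1}$.

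\textbf{Drop Step 2.} The relation $f_{k+1,\ell+1}=\omega f_{k\ell}-D_{k\ell}$ is not a splitting identity; it comes from cyclic invariance of $\brak{\cdot}$ and holds only when $k=0$ or $\ell=0$ (e.g.\ $f_{11}=\omega f_{00}-t$). For $k=\ell=1$ the $\omega^{-1}$-coefficients of the two sides are $\brak{A^2B^2}$ and $\brak{ABAB}$, which differ. Nor are the $f_{0\ell}$ determined by the $f_{k0}$ outside the symmetric case. Using such eliminations would make the system non-square; the paper simply keeps all $f_{k\ell}$ as unknowns.
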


\begin{proof}
Note that the coefficients in $y^{\wt d -1}$ in $E$ and $Q$ are also trivially equal, as both are equal to $-c\wt{t}_{\wt{d}-2}-\wt{t}_{\wt{d}-1}V'(x)$ (and likewise for $x^{d-1}$). Thus, we have indeed $(d-1)(\wt d -1)$ polynomial relations: $[x^iy^j](Q-E)$, $0\leq i \leq d-2$, $0\leq j \leq \wt d -2$, that we denote by $C_{i,j}$. Let us also write $Q_{i,j}$ for $[x^iy^j]Q(x,y)$. We first expand $R$ and $\wh{P}$ in $x$ and $y$:
\[
R=-\sum_{0\leq i \leq d-2}x^i\sum_{i+2\leq k \leq d}t_kf_{k-2-i,0}, \qquad \wh{P}=\sum_{\substack{0\leq i \leq d-2\\ 0\leq j \leq \wt{d}-2}}x^iy^j\sum_{\substack{i+2\leq k \leq d\\ j+2\leq \ell\leq \wt d}}t_k\wt{t}_\ell f_{k-2-i,\ell-2-j}.
\]
Hence, for $1\leq i \leq d-2, 1\leq j\leq \wt d -2, (i,j)\neq (d-2,\wt{d}-2), (1,1)$,
\[
Q_{i, j}=-\frac{\wh{f}}{c}t_{i+1}\wt{t}_{j+1}+\frac{1}{c^2}\sum_{\substack{i+2\leq k \leq d\\ j+2\leq \ell\leq \wt d}}t_k\wt{t}_\ell\left( \omega f_{k-2-i,0}f_{0,\ell-2-j}-\omega\wh{f} f_{k-2-i,\ell-2-j}-cf_{k-1-i,\ell-1-j}\right),
\]
and, for $1\leq i \leq d-2$,
\[
Q_{i,0}=\frac{\wh{f}}{c}t_{i+1}f_{1,0}-ct_i+\frac{1}{c}\sum_{i+1\leq k\leq d}t_k(f_{1,0}f_{k-i-1,0}-cf_{k-i,0}) +\frac{\omega}{c^2}\sum_{\substack{i+2\leq k \leq d\\ 2\leq \ell \leq \wt d}} t_k\wt{t}_\ell \left(f_{k-2-i,0}f_{0,\ell-2}-\wh{f} f_{k-2-i,\ell-2}\right),
\]
and finally
\[
Q_{d-2, \wt{d}-2}=t_{d-1}\wt{t}_{\wt d -1}+\frac{\omega}{c}t_d\wt{t}_{\wt{d}}f_{0,0}-\frac{1}{c}t_d\wt{t}_{\wt{d}}f_{1,1},
\]
\[
Q_{1, 1}=-\frac{\wh{f}}{c}t_{2}\wt{t}_{2}+c^2+\frac{1}{c^2}\sum_{\substack{3\leq k \leq d\\ 3\leq \ell\leq \wt d}}t_k\wt{t}_\ell\left( \omega f_{k-3,0}f_{0,\ell-3}-\omega\wh{f} f_{k-3,\ell-3}-cf_{k-2,\ell-2}\right),
\]
\[
Q_{0, 0}=-\frac{\wh{f}}{c}f_{1,0}f_{0,1}+\wh{f}\omega f_{0,0}+\frac{\omega}{c^2}\sum_{\substack{2\leq k \leq d\\ 2\leq \ell\leq \wt d}}t_k\wt{t}_\ell\left(f_{k-2,0}f_{0,\ell-2}-\wh{f} f_{k-2,\ell-2}\right).
\]
Note that from their definition, all the series $f_{i,j}$ with $(i,j)\neq (0,0)$ are power series in $c^{-1}$ without constant term, while $\wh{f}=-c+t/\omega+g$, where $g$ is a power series in $c^{-1}$ without constant term. Hence, $(\det J)(f_{i,j})$ is a Laurent series in $c$ with a finite upper bound on its powers. To show that it is non-zero, it therefore suffices to check that its highest degree coefficient in $c$ is non-zero. Let us now look at dependence in $c$ of the elements of $J$. For $i,j,k,\ell\geq 1$, we have (with the convention that $t_m=0$, $t_n=0$ if the indices $m$, $n$ are not within the correct ranges):
\begin{align*}
\frac{\partial C_{i,j}}{\partial f_{k,\ell}}&=-\frac{1}{c^2}\omega\wh{f}t_{k+i+2}\wt{t}_{\ell+j+2}-\frac{1}{c}t_{k+i+1}\wt{t}_{\ell+j+1}=\frac{1}{c}(t_{k+i+2}\wt{t}_{\ell+j+2}\omega-t_{k+i+1}\wt{t}_{\ell+j+1})+O(c^{-2}),\\
\frac{\partial C_{i,j}}{\partial f_{k,0}}&=O(c^{-1}), \qquad \frac{\partial C_{i,0}}{\partial f_{k,0}}=-t_{k+i}(1+\delta_{(i,1)})+O(c^{-1}), \qquad \frac{\partial C_{0,j}}{\partial f_{k,0}}=O(c^{-1}),\\
\frac{\partial C_{i,j}}{\partial f_{0,0}}&=-\frac{1}{c}t_{i+1}\wt{t}_{j+1}+O(c^{-2}), \qquad \frac{\partial C_{i,0}}{\partial f_{0,0}}=O(c^{-2}), \qquad \frac{\partial C_{0,0}}{\partial f_{0,0}}=-c\omega+O(1).
\end{align*}
Thus, the contribution to $(\det J)(f_{i,j})$ corresponding to picking the elements 
\[
\frac{\partial C_{d-i-1,\wt{d}-j-1}}{\partial f_{i,j}} \text{ for } i,j \neq 0, \, \frac{\partial C_{d-i-1,0}}{\partial f_{i,0}} \text{ for } i \geq 1  \ \text{ and likewise for the } f_{0,j}, \ \ \text{ and } \frac{\partial C_{0,0}}{\partial f_{0,0}},
\]
is equal to 
\[
(-1)^{d\wt{d}+d+\wt{d}+1}(t_d)^{(d-2)(\wt d -1)}(\wt t_{\wt d})^{(d-1)(\wt d -2)}c^{1-(d-2)(\wt d -2)}+O(c^{-(d-2)(\wt d -2)}),
\]
and every other term in $(\det J)(f_{i,j})$ is $O(c^{-(d-2)(\wt d -2)})$. This concludes the proof.
\end{proof}

Theorem~\ref{thm:algebraicity} gives a strategy to obtain an algebraic equation on $\wh{f}(\omega)$ that does not rely on the kernel method, contrary to the strategy starting from~\eqref{eq:second-equation-in-M} detailed in Appendix~\ref{sec:appendix-alg-by-bmj}. These two strategies differ conceptually, and they also entail different computational steps. Let us conclude this subsection by comparing them on these two fronts.

In terms of computational steps, the kernel method (see Appendix~\ref{sec:appendix-alg-by-bmj} for details and notation) generically requires the elimination of $2d$ auxiliary variables (the $M_i$'s and the $\bar{Z}_i$), and involves (among other steps) the identification of the  $\bar{Z}_i$'s as the $d$ formal power series solutions of an equation of degree $d+\wt d$. Our strategy generically requires the elimination of $(d-1)(\wt d -1)-1$ auxiliary series (the $f_{i,j}$ with $(i,j)\neq (0,0)$). It is straightforward to inductively eliminate the $f_{i,j}$ with $i,j \neq 0$ using the equation $C_{d-i-1,d-j-1}$, and the resulting polynomial system on the remaining auxiliary series $f_{i,0},f_{0,j}$ is comprised of $d+\wt{d}-4$ quadratic equations in as many variables. In full generality, it is not clear to say which strategy is the simplest. However, in the special case of Ising quadrangulations (see Section~\ref{sec:ising-quad}), we can use the parity of the potentials to simplify both strategies. With the strategy of Theorem~\ref{thm:algebraicity}, we only need to solve a linear system on 3 variables ($f_{22},f_{20},f_{02}$). Compared to this, the kernel method, while simpler than in the generic case, still requires solving a quartic degree equation on $(\bar{Z}_i)^2$, which is much more complicated.

Another computational point of comparison is the amount of information required on the monochromatic generating function $Y(x)$. While we use the known explicit rational parametrization of $(x,Y(x))$ in Section~\ref{sec:ising-quad}, in principle we only need the explicit expression of $E(x,y)$ and the initial coefficients $D_{ij}$, while the kernel method requires the whole rational parametrization, since it is necessary to be able to  write an equation on $M$ (or $\mathcal{M}$) that is polynomial in the catalytic variable. In particular, our method might be generalized to other problems where the equivalent of $Y(x)$ is known to be algebraic, with an explicit annihilating polynomial, but where there is no known explicit parametrization.\\

On a more conceptual level, our strategy uses two catalytic variables $x$ and $y$, corresponding to two different non-alternating parts of the boundary. This might seem more complicated to eliminate than the single catalytic variable of the kernel method, since there is no general framework to eliminate several catalytic variables from a functional equation. However, these two catalytic variables are ``automatically eliminated'' in a sense, when we turn to the coefficients in $x$ and $y$ of $Q-E$. This is possible in particular because $x,y$ do not only correspond to refinements of the enumeration problem associated to $\wh{f}(\omega)$, but have an independent combinatorial meaning in the enumeration of the monochromatic boundary condition.

Another notable feature of our strategy is that it keeps the $x\leftrightarrow y$ symmetry of the problem, which is clearly not the case of the kernel method applied to $M$.

\subsection{Re-derivation of the Tutte equation and kernel system}
\label{sec:bouttier-carrance-eq-rederived}

Recall from Section~\ref{subsec:constell} that equation~\eqref{eq:bouttier-carrance-eq-notation} (equation (27) in~\cite{bouttier-carrance})  is the starting point of applying the kernel method to $\wh{f}$, as mentioned in~\cite{bouttier-carrance} and detailed here in Appendix~\ref{sec:appendix-alg-by-bmj}. The following result relates it to the splitting equations that we have derived:

\begin{prop}
Equation~\eqref{eq:second-equation-in-M} implies equation~\eqref{eq:bouttier-carrance-eq-notation} (equation (27) in~\cite{bouttier-carrance}).
\label{prop:tutte-bc-from-splitting}
\end{prop}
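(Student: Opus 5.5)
The plan is to start from \eqref{eq:second-equation-in-M}, change variables to the conventions of~\cite{bouttier-carrance}, and rewrite each term until it takes the form $K\ul M=\ul R$. Concretely, set $\omega=1/w$ and substitute
\[
M(x,1/w)=w\big(\ul M(x,w)+W(x)\big),\qquad f_{00}(1/w)=A(w).
\]
Together with $\wh f=f_{00}-c$, this expresses everything in terms of $\ul M$, $W$, $Y$ and $A$. Multiplying \eqref{eq:second-equation-in-M} by the appropriate power of $w$, the left-hand side becomes $(\ul M+W)(\wh f+cwxY)$. This already contains the kernel, up to normalization, as the coefficient of $\ul M$. The remaining terms $(\wh f+cwxY)W$ are moved to the right-hand side.

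Two pieces of the right-hand side must then be re-expressed: $xR$ and $\wh f f_{01}$. For $R$, I would use the identity
\[
\frac{V'(x)-V'(A)}{x-A}=\Big[\frac{V'(x)}{x-A}\Big]_{x^{\geq 0}},
\]
valid as formal series in $x^{-1}$. It gives $R(x,\omega)=[V'(x)M(x,\omega)]_{x^{\geq 0}}=-\sum_i t_i[x^{i-1}M]_{x^{\geq 0}}$. Multiplying by $x$ yields $-\sum_i t_i[x^iM]_{x^{\geq 0}}$, up to correction terms coming from the constant coefficient in $x$. Substituting $M=w(\ul M+W)$ reproduces the sum $w\sum_i t_i[x^i(\ul M+W)]_{x^{\geq 0}}$ appearing in $\ul R$.

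The correction terms from the $x^0$ part, together with $-\wh f f_{01}$, must combine into the terms $-A(w)W$-type and $wxWY$ that remain after expanding. To handle $f_{01}$, I would apply the splitting of Proposition~\ref{prop:splitting-procedure} (its $B$-version) to $c\brak{\frac{1}{\omega-BA}B}$. This expresses $f_{01}$ through $\brak{\frac{V'(A)}{\omega-BA}}$ and $f_{00}^2$. The first of these is itself a coefficient of $[x^{-1}]$ in $V'(x)M$, which is exactly the missing piece of the truncated sum.

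Finally, one checks that $cwxYW$ on the left matches the $wxW(x)Y(x)$ term of $\ul R$, and divides by the normalizing constant ($-c$, possibly with a sign) to turn $\wh f+cwxY$ into $K=1-A(w)-wxY(x)$. I expect the main obstacle to be this bookkeeping of conventions rather than any conceptual step. It requires reconciling the weight $c$ per edge (presumably $c=1$ and sign conventions in~\cite{bouttier-carrance}), the shift $\wh f=f_{00}-c$, the inversion $\omega\leftrightarrow w$, and the boundary constant terms in $[\cdot]_{x^{\geq0}}$. I would first verify the identity on low-order coefficients, for instance the trivial map and a single edge, to fix signs, and then carry out the general computation.
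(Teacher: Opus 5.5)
Your plan is correct and is the paper's argument run in the opposite order. The paper first rewrites $\ul R$ into $\ul R = cwxWY + xR(x,1/w) + \wh f f_{01}$, using two ingredients you also use:
\begin{itemize}
\item the identity $[xV'(x)M]_{x^{\geq 0}} = xR(x,\omega) + \brak{\frac{V'(A)}{\omega-BA}}$, which is your truncation identity plus its $x^{0}$ correction;
\item the $B$-splitting of $c\brak{\frac{1}{\omega-BA}B}$.
\end{itemize}
It then rearranges \eqref{eq:second-equation-in-M} with $\omega = 1/w$, $M = w(\ul M + W)$ and $c=1$. At $c=1$ the final normalization is just multiplication by $-1$.

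One slip needs fixing. Cutting $(BA)^k$ at one of its letters $A$ leaves $P_1=(BA)^jB$ and $P_2=(BA)^{k-1-j}$. So the splitting gives $cf_{01} = -\brak{\frac{V'(A)}{\omega-BA}} + f_{01}f_{00}$, not a term in $f_{00}^2$. This is exactly what yields $\brak{\frac{V'(A)}{\omega-BA}} = (f_{00}-c)f_{01} = \wh f f_{01}$.

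With this identity, the bookkeeping is simpler than you anticipate:
\begin{itemize}
\item the $x^0$ correction in $-xR$ cancels the $-\wh f f_{01}$ on the right of \eqref{eq:second-equation-in-M} exactly, leaving $-[xV'(x)M]_{x^{\geq 0}}$;
\item the two $\wh f W$ terms cancel on their own;
\item no $A(w)W$-type term survives, and the only leftover is the $cwxWY$ term coming from the left-hand side.
\end{itemize}
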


\begin{proof}
We can rewrite the right hand side $\ul R(x,w)$ of~\eqref{eq:bouttier-carrance-eq-notation} using a splitting equation:
\begin{align*}
\ul R(x,w)  &= cw x W(x)Y(x) + w\left[xV'(x)\frac{M(\frac{1}{w},x)}{w}\right]_{x^{\geq 0}}= cw x W(x)Y(x)  + \brak{\frac{xV'(x) - AV'(A)}{(x-A)(1/w-BA)}} \\ 
&= cwxW(x)Y(x)  + xR(x,1/w)  -\brak{\frac{AV'(A)-xV'(A)}{(x-A)(1/w-BA)}} \\ 
&= cwxW(x)Y(x)  + xR(x,1/w)  + \brak{\frac{V'(A)}{1/w-BA}} \\ 
&= cwxW(x)Y(x)  + xR(x,1/w) - \left(c\brak{\frac{1}{1/w - BA}B} - \brak{\frac{1}{1/w-BA}B}\brak{\frac{1}{1/w-BA}}\right)\\ 
&\gr{\Bigg[\uparrow\textnormal{Splitting of } c\brak{\frac{1}{1/w - BA}\mathbf{B}}\Bigg]},
\end{align*} 
i.e.
\begin{equation}
\ul R(x,w)= cwxW(x)Y(x)  + xR(x,1/w) + \widehat{f}(1/w)f_{10}(1/w).
\label{eq:remainder-bc-expression}
\end{equation}

Since equation~\eqref{eq:second-equation-in-M}, with $\omega = 1/w$, is equivalent to
\[
\left(-\frac{\widehat{f}(\frac{1}{w})}{w} - cxY(x)\right)M\left(\frac{1}{w},x\right) =   xR\left(x,\frac{1}{w}\right) - \widehat{f}\left(\frac{1}{w}\right)W(x) + \widehat{f}\left(\frac{1}{w}\right)f_{01}\left(\frac{1}{w}\right),
\]
it implies that
\[
\left(c-f_{00}\left(\frac{1}{w}\right) - cwxY(x)\right)\left(\frac{M(\frac{1}{w},x)}{w}-W(x)\right) = cwxW(x)Y(x) + xR\left(x,\frac{1}{w}\right) + \widehat{f}\left(\frac{1}{w}\right)f_{01}\left(\frac{1}{w}\right),
\]
which gives equation (27) in~\cite{bouttier-carrance} by specializing $c$ to 1.
\end{proof}

\begin{rem}
Equations~\eqref{eq:second-equation-in-M} and~\eqref{eq:bouttier-carrance-eq-notation} are both Tutte-like equations of the same generating function $M$, but here compared to~\cite{bouttier-carrance} we considered more general boundary conditions with extra $B$'s to be able to write~\eqref{eq:second-equation-in-M}.
\end{rem}

Let us also note that from our master equation~\eqref{eq:Q=E}, we can also obtain a ``symmetrized'' version of the kernel system $\{K=0,\ul R=0\}$ of~\cite{bouttier-carrance} for the unique formal power series $\mathpzc{w}(\xi)$ satisfying the kernel relation~\eqref{eq:kernel-relation}. Indeed, substituting $\omega$ by $\Omega(\xi)=\mathpzc{w}(\xi)^{-1}$ in $Q$, we obtain:
\begin{align*}
Q(x,y)&_{\lvert \omega=\Omega(xy)}=-\frac{\wh{f}(\Omega)}{c}\Big(V'(x)+cy-f_{01}(\Omega)-\frac{xR(\Omega)}{\widehat{f}}\Big)\Big(\widetilde{V}'(y)+cx -f_{10}-\frac{y\widetilde{R}(\Omega)}{\widehat{f}(\Omega)}\Big)\\
&=\frac{-1}{c\wh{f}(\Omega)}\left(-\frac{cxyV'}{\Omega}-\frac{c^2xy^2}{\Omega}-f_{01}(\Omega)\wh{f} - xR(\Omega)\right)\cdot\left(-\frac{cxy\wt V'}{\Omega}-\frac{c^2x^2y}{\Omega}-f_{10}(\Omega)\wh{f} - y\wt R(\Omega)\right)\\
&=\frac{-1}{c\wh{f}(\Omega)}\left(cxy\mathpzc{w}(xy)(V'+cy)+f_{01}(\Omega)\wh{f}+ xR(\Omega)\right)\cdot \left(cxy\mathpzc{w}(xy)(\wt V'+cx)+f_{10}(\Omega)\wh{f}+ y\wt R(\Omega)\right).
\end{align*}
Therefore, performing the additional substitution $x=x(z), y=y(z)$, we obtain, using~\eqref{eq:remainder-bc-expression} (with the slight abuse of notation $\mathpzc{w}(z)=\mathpzc{w}(x(z)y(z))$, and after specialization of $c$ to 1, to fit with~\cite{bouttier-carrance}):
\[
0=Q(x(z),y(z))_{\lvert \omega=\Omega(z)}=\frac{-1}{c\wh{f}(\Omega)}\ul R(x(z),\mathpzc{w}(z))\cdot\wt{\ul R}(y(z),\mathpzc{w}(z)),
\]
so that $\mathpzc{w}(z)$ must satisfy either $\ul R(x(z),\mathpzc{w}(z))=0$ or $\wh{\ul R}(y(z),\mathpzc{w}(z))=0$.

Note that this gives two choices for the second equation of the system on $(\mathpzc{w}(z), A(\mathpzc{w}(z))$ in terms of $z$, but (in cases where we do not need to eliminate $z$, e.g. bicolored $m$-angulations) these should lead to the same expression for the solution in terms of $\xi=x(z)y(z)$, as we know that $\mathpzc{w}(\xi)$ is determined uniquely as a formal power series in $\xi^{-1}$.

\section{Explicit solution for Ising quadrangulations}
\label{sec:ising-quad}

In this section, we take $V(x) = -\frac{t_4}{4}x^4 - \frac{t_2}{2}x^2$ and $\widetilde{V}(y) =- \frac{\widetilde{t}_4}{4}y^4 - \frac{\widetilde{t}_2}{2}y^2$, which corresponds to quadrangulations decorated by the Ising model as explained in Section~\ref{sec:corresp-ising}, that we call \emph{Ising quadrangulations} for short. We apply to this choice of potentials the general strategy of Theorem~\ref{thm:algebraicity}.

\subsection{Parametrization for the monochromatic boundary}

With that choice of $V$ and $\wt{V}$, the parametrization of~\cite[Theorem 8.3.1]{eynard} is written as
\begin{equation}
\begin{aligned}
x(z) &= \gamma z + \alpha_1 z^{-1} +\alpha_3 z^{-3} \\
y(z) &= \gamma z^{-1} + \beta_1 z + \beta_3 z^3,
\label{eq:parametrization-of-x(z)-and-y(z)-for-E=0}
\end{aligned}
\end{equation}
and the system~\eqref{eq:system-on-alpha-beta-gamma} characterizing the functions $\alpha_i,\beta_j,\gamma$ can be written as (see the \textsc{SageMath} companion file for details)
\begin{equation}
\begin{aligned}
-c\beta_3 = \gamma^3t_4&, \qquad -c\alpha_3 = \gamma^3\widetilde{t}_4,\\
 -c\beta_1 = 3t_4\gamma^2\alpha_1+t_2\gamma&, \qquad -c\alpha_1 = 3\widetilde{t}_4\gamma^2\beta_1 +\widetilde{t}_2\gamma,\\
-\frac{1}{c} &= -\gamma^2 + \alpha_1\beta_1 + 3\alpha_3\beta_3.
\label{eq:system-for-alpha-i-beta-i-for-t2-and-t4}
\end{aligned}
\end{equation}
These relations allow us to express the original variables $t,t_2,t_4,\wt{t}_2,\wt{t}_4$, as rational functions of the parameters $\alpha_1,\alpha_3,\beta_1,\beta_3,\gamma$, and the original variable $c$:
\begin{equation}
\begin{aligned}
t &= c(\gamma^2-\alpha_1\beta_1-3\alpha_3\beta_3),\\
t_2&=\frac{c}{\gamma^2}(\beta_1\gamma-3\alpha_1\beta_3), \qquad t_4=\frac{c\beta_3}{\gamma^3},\\
\wt{t}_2&=\frac{c}{\gamma^2}(\alpha_1\gamma-3\beta_1\alpha_3), \qquad \wt{t}_4=\frac{c\alpha_3}{\gamma^3}.
\end{aligned}
\label{eq:param-t-ti-ising-quads}
\end{equation}

\subsection{Explicit master equation for $\wh{f}$}
\label{sec:master-eq-ising-quads}

We apply the general strategy of Theorem~\ref{thm:algebraicity} to Ising quadrangulations, and use the above parametrization~\eqref{eq:param-t-ti-ising-quads}, to obtain the following:
\begin{prop}
In the case of Ising quadrangulations, the pair $(\omega, \wh{f}(\omega))$ satisfies an explicit polynomial equation with coefficients in $\Q[c, \gamma, \alpha_1, \alpha_3, \beta_1,\beta_3]$, of degree 3 in $\wh{f}$ and 7 in $\omega$, and the associated algebraic curve is of genus 0. Moreover, the pair of functions
\[
\zeta(\omega)=\omega+\frac{\gamma^6}{\alpha_3\beta_3\omega}, \qquad \lambda(\omega)=\wh{f}(\omega)
\]
satisfies an explicit polynomial equation with coefficients in $\Q[c, \gamma, \alpha_1, \alpha_3, \beta_1,\beta_3]$, of degree 3 in both $\zeta$ and $\lambda$.
\label{prop:eqs-ising-q-non-sym-final}
\end{prop}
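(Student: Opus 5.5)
We specialize the strategy of Theorem~\ref{thm:algebraicity} to $d=\wt d=4$ with even potentials. Since $V,\wt V$ are even, a hypermap with boundary condition $A^iB^j$ must have $i+j$ even. Indeed, all inner face degrees are even, so the map is bipartite and its boundary length is even. Hence $f_{ij}=0$ whenever $i+j$ is odd, and the only unknowns that survive among $f_{00},\dots,f_{22}$ are $f_{00}=\wh f+c$, $f_{11}$, $f_{20}$, $f_{02}$ and $f_{22}$. With $t_3=\wt t_3=0$, the expansions of $R$, $\wt R$ and $\wh P$ reduce to a handful of terms:
\[
R=-t_4(x^2f_{00}+f_{20})-t_2f_{00},\qquad \wh P=t_4\wt t_4(x^2y^2f_{00}+x^2f_{02}+y^2f_{20}+f_{22})+\dots
\]
I will then write out $Q(x,y)$ from~\eqref{eq:Q} explicitly. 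I will write $E(x,y)$ from~\eqref{eq:E-def} or the determinant~\eqref{eq:master-equation-monochromatic-case}, with the $t$'s replaced by~\eqref{eq:param-t-ti-ising-quads}, and extract the coefficients $C_{i,j}=[x^iy^j](Q-E)$ for $0\le i,j\le 2$. By parity only $C_{0,0},C_{2,0},C_{0,2},C_{1,1},C_{2,2}$ are nontrivial.

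The key steps, in order, are as follows. First, $C_{2,2}$ gives $f_{11}$ linearly in terms of $f_{00}$; this matches the formula for $Q_{d-2,\wt d-2}$. Second, $C_{1,1}$, $C_{2,0}$ and $C_{0,2}$ are, after substituting $f_{11}$, linear in $(f_{22},f_{20},f_{02})$ with coefficients polynomial in $\omega,\wh f$. As noted after Theorem~\ref{thm:algebraicity}, I will solve this $3\times3$ linear system by Cramer's rule. Third, I substitute the solution into $C_{0,0}$. This contains the quadratic term $f_{10}f_{01}$, which vanishes by parity, together with products such as $f_{20}f_{02}$ and $\wh f f_{22}$. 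Clearing denominators gives a polynomial $\Phi(\omega,\wh f)=0$ with coefficients in $\Q[c,\gamma,\alpha_i,\beta_i]$. I then strip spurious factors, such as powers of $\omega$ or the Cramer determinant, checking that these factors do not vanish on the series $\wh f=-c+t/\omega+\dots$. I expect to end with degree $3$ in $\wh f$ and $7$ in $\omega$; this degree count will be read off the computer-algebra output, as in the companion file.

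For the genus claim and the $\zeta$-equation, I will look for an involution of the curve. The parametrization~\eqref{eq:parametrization-of-x(z)-and-y(z)-for-E=0} and the coefficient structure suggest that $\Phi$ is invariant under $\omega\mapsto\gamma^6/(\alpha_3\beta_3\omega)$ up to a monomial factor $\omega^{7}$. Checking that $\omega^{-7}$-normalized $\Phi$ is symmetric, I can rewrite it as a polynomial in $\zeta=\omega+\gamma^6/(\alpha_3\beta_3\omega)$. This works as follows: the odd degree $7$ leaves an overall factor $\omega^{1/2}$-type asymmetry, so I will instead show that $\Phi=\omega^k\Psi(\zeta,\lambda)$ times possibly a linear factor in $\omega$ absorbed into the normalization, giving degree $3$ in $\zeta$. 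Genus $0$ would then follow by exhibiting a rational parametrization, or by computing the genus of $\Psi$ through its singularities with Sage. The natural candidate for a parametrization is a rational point or a singular point of maximal multiplicity on the cubic $\Psi$: a cubic in $\zeta$ and $\lambda$ with enough double points has genus $0$.

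The main obstacle is the last step. The elimination itself is mechanical but heavy. Certifying genus $0$ requires either locating the singular points of $\Psi$ symbolically in the parameters, or guessing a parametrization, for instance by pulling back through $\omega=\gamma^3h\cdot(\text{rational})$ in the spirit of Theorem~\ref{thm:ratio-param-Ising-quad-sym}, and verifying it by substitution. I would first try to find a point where $\Psi$ and its partial derivatives vanish using the asymptotics $\omega\to\infty$, $\wh f\to -c$, and then parametrize by lines through that point.
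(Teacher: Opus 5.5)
Your elimination step matches the paper's. Parity kills every $f_{ij}$ with $i+j$ odd, and the $x^2y^2$ coefficient gives $f_{11}=\omega f_{00}-t$. The $x^2$, $y^2$, $xy$ coefficients form a linear system in $f_{20},f_{02},f_{22}$, and the constant coefficient yields the $(7,3)$ equation. Taking $E$ in determinant form is a legitimate shortcut: it bypasses the paper's computation of $D_{20},D_{02},D_{11},D_{22}$ by Lagrange inversion and splitting. If you use \eqref{eq:E-def} instead, you need those $D_{ij}$, which you do not address.

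The $\zeta$-step cannot work as you set it up. An involution $(\omega,\widehat f)\mapsto(\gamma^6/(\alpha_3\beta_3\omega),\widehat f)$ of the curve $\Phi=0$ would be an order-$2$ automorphism of $L=K(\omega,\widehat f)$ fixing $K(\widehat f)$. That forces $2\mid[L:K(\widehat f)]=7$, which is impossible. The ``$\omega^{1/2}$-type asymmetry'' you noticed is exactly this obstruction, and it cannot be absorbed into a linear factor of an irreducible $\Phi$. The same parity count shows that $\zeta$ and $\widehat f$ satisfy no relation of degree $3$ in $\widehat f$ at all.

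The missing idea is that the second coordinate must be $\lambda=\widehat f\,\omega$. The ``$\lambda(\omega)=\widehat f(\omega)$'' in the statement is a misprint; the paper's proof, and its later normalization $\lambda/\zeta\to-c$, use $\lambda=\widehat f\omega$. Taking the numerator of $\Phi(\omega,\lambda/\omega)$ lowers the $\omega$-degree to $6$, and the involution fixing $\lambda$ is then a genuine symmetry. The paper makes this effective by checking that $\mathrm{Res}_\omega\big(\mathrm{Eq}_3,\ \alpha_3\beta_3\omega^2-\alpha_3\beta_3\zeta\omega+\gamma^6\big)$ is a perfect square; its square root is the bidegree-$(3,3)$ polynomial.

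Your genus argument also runs in the wrong direction. The $(\zeta,\lambda)$-curve is the quotient of the $(\omega,\lambda)$-curve by that involution, and by Riemann--Hurwitz genus can only drop under a quotient. So genus $0$ for $\mathrm{Eq}_2$ implies genus $0$ for $\mathrm{Eq}_4$, which is how the paper argues after checking $\mathrm{Eq}_2$ directly in \textsc{Maple}. The converse fails, because a double cover of a rational curve can have any genus. If you parametrize $\Psi$ by some $s$, you must still show that $\zeta(s)^2-4\gamma^6/(\alpha_3\beta_3)$ has exactly two zeros or poles of odd order. That is the one-cut check the paper performs for $(H,\omega)$ in the symmetric case.

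There are two further problems with your parametrization plan. First, $\Psi$ has bidegree $(3,3)$, not total degree $3$, and its arithmetic genus on $\mathbb{P}^1\times\mathbb{P}^1$ is $4$. So the plane-cubic trick of projecting from a double point does not apply as such. Second, an explicit parametrization in the six-parameter non-symmetric case is precisely what the paper reports it could not obtain directly.
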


To show Proposition~\ref{prop:eqs-ising-q-non-sym-final}, we first obtain an equation on $\omega, \wh{f}$ whose coefficients depend on the initial terms $D_{ij}$ of the Dobrushin boundary condition:

\begin{lem}
The pair $(\omega, \wh{f}(\omega))$ satisfies:
\[
\textnormal{Eq}(\wh{f},\omega)=0,
\]
where
\begin{equation}
\begin{aligned}
&\textnormal{Eq}(\wh{f},\omega)=(-t_4^3\wt{t}_4^3\omega^7 + 2c^2t_4^2\wt{t}_4^2\omega^5 - c^4t_4\wt{t}_4\omega^3)\wh{f}^3+\Bigg(-ct_4^3\wt{t}_4^3w^7 + t\wt{t}_4^3\wt{t}_4^3w^6 + c(c^2 - t_2\wt{t}_2)t_4^2\wt{t}_4^2\omega^5 \\
&- c^2t_4\wt{t}_4(\wt{t}_2^2t_4 + t_2^2\wt{t}_4 + 2tt_4\wt{t}_4)\omega^4 + c^3(c^2 -t_2\wt{t}_2)t_4\wt{t}_4\omega^3 + c^4tt_4\wt{t}_4\omega^2 - c^7\omega   \Bigg)\wh{f}^2\\
+& \Bigg(-(D_{11}ct_4^3\wt{t}_4^3 + (c^2 + t_2\wt{t}_2)c^2t_4^2\wt{t}_4^2)\omega^5 + (-c^2(t_4\wt{t}_2^2 + \wt{t}_4t_2^2) +  D_{20}t_2t_4\wt{t}_4^2 +  D_{02}\wt{t}_2t_4^2\wt{t}_4 + 2tt_2\wt{t}_2t_4 \wt{t}_4)ct_4\wt{t}_4\omega^4 \\
&+ 2(tt_2^2\wt{t}_4+t\wt{t}_2^2t_4+D_{02}\wt{t}_2t_4\wt{t}_4+D_{20}\wt{t}_2t_4\wt{t}_4+cD_{11}t_4\wt{t}_4-c^2t_2\wt{t}_2t_4\wt{t}_4+c^4)c^2t_4\wt{t}_4\omega^3 \\
&+ (2tt_2\wt{t}_2t_4\wt{t}_4+D_{02}t_2t_4\wt{t}_4^2+D_{20}\wt{t}_2t_4^2\wt{t}_4-c^2\wt{t}_2^2t_4-c^2t_2^2\wt{t}_4)c^3\omega^2 - (c^3 + c^t_2\wt{t}_2 + D_{11}t_4\wt{t}_4)c^5\omega\Bigg)\wh{f}\\
+&(D_{02}t_2\wt{t}_4+D_{20}\wt{t}_2t_4+D_{22}t_4\wt{t}_4+tt_2\wt{t}_2-c^2t)c^2t_4^2\wt{t}_4^2\omega^4+\Big(D_{02}c^2\wt{t}_2t_4\wt{t}_4+D_{20}c^2t_2t_4\wt{t}_4-D_{02}tt_2t_4\wt{t}_4^2\\
&-D_{20}t\wt{t}_2t_4^2\wt{t}_4-D_{02}D_{20}t_4\wt{t}_4+c^2tt_2^2\wt{t}_4+c^2t\wt{t}_2^2t_4-t^2t_2\wt{t}_2t_4\wt{t}_4-c^4t_2\wt{t}_2\Big)ct_4\wt{t}_4\omega^3\\
+&\Big(2c^2tt_2\wt{t}_2t_4\wt{t}_4+2c^4tt_4\wt{t}_4-t^2\wt{t}_2^2t_4^2\wt{t}_4-t^2t_2^2t_4\wt{t}_4^2-c^4\wt{t}_2^2t_4-c^4t_2\wt{t}_4-D_{02}^2t_4^2\wt{t}_4^3-D_{20}^2t_4^3\wt{t}_4^2\\
&-2D_{02}t\wt{t}_2t_4^2\wt{t}_4^2-2D_{20}tt_2t_4^2\wt{t}_4^2-2D_{22}c^2t_4^2\wt{t}_4^2\Big)c^2\omega^2\\
+&\Big(c^2t\wt{t}_2^2t_4+c^2tt_2^2\wt{t}_4+c^2D_{02}\wt{t}_2t_4\wt{t}_4+c^2D_{20}t_2t_4\wt{t}_4-c^4t_2\wt{t}_2-D_{02}tt_2t_4\wt{t}_4^2-D_{20}t\wt{t}_2t_4^2\wt{t}_4\\
&-D_{02}D_{20}t_4^2\wt{t}_4^2-t^2t_2\wt{t}_2t_4\wt{t}_4\Big)c^3\omega +c^6(tt_2\wt{t}_2+D_{02}t_2\wt{t}_4+D_{20}\wt{t}_2t_4+D_{22}t_4\wt{t}_4-c^2t).
\end{aligned}
\label{eq:master-equation-Ising-quadrangulations-dij}
\end{equation}
\end{lem}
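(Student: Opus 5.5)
Instantiate Theorem~\ref{thm:algebraicity} with $d=\wt d=4$, $t_3=\wt t_3=0$. The coefficients $C_{i,j}=[x^iy^j](Q-E)$, $0\le i,j\le 2$, form nine polynomial relations in the series $f_{00},f_{10},f_{01},f_{20},f_{02},f_{11},f_{21},f_{12},f_{22}$. We eliminate every auxiliary series except $f_{00}=\wh f+c$, obtaining a single relation in $\omega$ and $\wh f$ whose coefficients depend only on the Dobrushin numbers $D_{ij}$ and the weights. The main simplification is parity. Both potentials are even, and an alternating boundary condition $A^i(BA)^kB^j$ requires $i+j$ even for a nonzero count: a hypermap whose faces all have even degree is bipartite, so its boundary length is even. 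Hence $f_{10}=f_{01}=f_{21}=f_{12}=0$. Also $R$ contains only even powers of $x$, namely $R=-t_4(x^2f_{00}+f_{20})-t_2f_{00}$, and the same holds for $\wt R$ in $y$. Likewise $\wh P$ involves only $x^{0,2}y^{0,2}$, with coefficients combining $f_{00},f_{20},f_{02},f_{22}$.

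Substitute these into \eqref{eq:Q}. Then $Q(x,y)$ is explicit in $x,y,\omega,\wh f,f_{20},f_{02},f_{22}$. For $E(x,y)$ we use \eqref{eq:E-def}, where $P(x,y)$ is given by the same bracket without the resolvent. Its coefficients are the Dobrushin numbers $D_{00}=t$, $D_{20},D_{02},D_{22}$; the odd ones vanish by parity. We then expand $Q-E$ and keep the monomials $x^iy^j$ with $i,j\in\{0,1,2\}$. After parity, the nontrivial equations should be $C_{2,2}$, $C_{2,0}$, $C_{0,2}$, $C_{1,1}$ and $C_{0,0}$ (the remaining ones either vanish identically or coincide with these).

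Solve in the order suggested after Theorem~\ref{thm:algebraicity}. First, $C_{2,2}$ is linear in $f_{22}$ (compare the formula for $Q_{d-2,\wt d-2}$, where $f_{11}$ is replaced by the correct index). Next, $C_{2,0}$ and $C_{0,2}$ should be linear in $f_{20},f_{02}$ once $f_{22}$ is known, with coefficients involving $\omega$ and $\wh f$. This is the linear system in the three unknowns $f_{22},f_{20},f_{02}$ mentioned earlier. We solve it by Cramer's rule and substitute into $C_{0,0}$. Depending on which is simpler, we may also use $C_{1,1}$ as a cross-check or in place of one of the others. Clearing denominators, which are powers of $c$ and factors such as $(c^2-t_4\wt t_4\omega^2)$ from the determinant, gives a polynomial in $\wh f$ and $\omega$. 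We check that it equals $\textnormal{Eq}$ up to a nonzero factor, and that its degree is $3$ in $\wh f$ and $7$ in $\omega$. The leading coefficient $-t_4\wt t_4\omega^3(t_4\wt t_4\omega^2-c^2)^2$ supports the expected form of the determinant.

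The main obstacle is bookkeeping, not concept. We must first confirm that parity really leaves a system that is linear in $f_{20},f_{02},f_{22}$. Products like $f_{20}f_{02}$ appear in $Q_{0,0}$, but there they multiply only $\omega t_4\wt t_4$ terms. Together with $R\wt R/\wh f$ and the factor $\wh f$ in front, they produce the $D_{02}D_{20}$ and $D_{20}^2$ terms visible in $\textnormal{Eq}$. So the final elimination step may be quadratic rather than linear in those series, and we must track which equation to solve first. We would do this reduction symbolically (SageMath, as in the companion file). A final sanity check is to expand the result to low order in $t$ and compare with direct enumeration of small hypermaps, e.g.\ $F_1$.
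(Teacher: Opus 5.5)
Your overall route is the paper's: use parity to kill the odd $f_{ij}$ and $D_{ij}$, write $R$, $\wt R$, $\wh P$, $P$ explicitly, eliminate $f_{20},f_{02},f_{22}$ using coefficients of $Q-E$, and read off the constant coefficient. However, the elimination scheme you propose fails at a concrete step.

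The coefficient $[x^2y^2](Q-E)$ does not contain $f_{22}$. Expanding, one finds $C_{2,2}=\frac{t_4\wt t_4}{c}\left(\omega f_{00}-f_{11}-t\right)$. Indeed, $[x^2y^2]Q$ receives $\frac{\omega\wh f}{c^2}\cdot\frac{c\,t_4\wt t_4f_{00}}{\wh f}$ from the $x^2y^2$ term of $c^2f_{00}-\wh P+R\wt R/\wh f$, and $\frac{1}{c}\cdot(-t_4\wt t_4f_{11})$ from its $xy$ term multiplied by $cxy$. So the formula for $Q_{d-2,\wt d-2}$ with $f_{11}$ is correct, not a typo. $C_{2,2}$ only recovers $f_{11}=\omega f_{00}-t$, which the paper writes down directly as $\brak{\frac{1}{\omega-BA}BA}=\omega f_{00}-t$.

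The series $f_{22}$ enters $Q$ only through the constant term of $c^2f_{00}-\wh P+R\wt R/\wh f$, namely
\[
G_0=(c^2-t_2\wt t_2)f_{00}-t_4\wt t_2f_{20}-t_2\wt t_4f_{02}-t_4\wt t_4f_{22}+\frac{1}{\wh f}(t_4f_{20}+t_2f_{00})(\wt t_4f_{02}+\wt t_2f_{00}).
\]
It therefore appears only in $Q_{0,0}=\frac{\omega\wh f}{c^2}G_0$ and in $Q_{1,1}$, which contains $\frac{1}{c}G_0$ via the factor $cxy$. Consequently $C_{1,1}$ is not an optional cross-check. It is the only equation other than $C_{0,0}$ that contains $f_{22}$, and it is exactly what the paper uses to eliminate $f_{22}$. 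If you follow your plan literally ($C_{2,2}$, $C_{2,0}$, $C_{0,2}$, then $C_{0,0}$), $f_{22}$ survives in the final relation. If you replace $C_{2,2}$ by $C_{1,1}$, then $f_{11}$, which also appears in $C_{1,1}$, is left undetermined; your proposal never says how $f_{11}$ is eliminated.

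With the order corrected, the computation goes through and matches the paper.
\begin{itemize}
\item $C_{2,0}$ and $C_{0,2}$ involve neither $f_{11}$ nor $f_{22}$. They form a closed $2\times2$ linear system in $f_{20},f_{02}$; for example $C_{2,0}=-t_4f_{20}+\frac{\omega t_4}{c}(\wt t_4f_{02}+\wt t_2f_{00})-\frac{t_4}{c}(\wt t_4D_{02}+\wt t_2t)$. Its determinant is $\frac{t_4\wt t_4}{c^2}(c^2-t_4\wt t_4\omega^2)$, which is where the squared factor in the leading coefficient you predicted comes from.
\item $C_{1,1}$ is then linear in $f_{22}$. The $f_{20}f_{02}$ contributions from $-\frac{\wh f}{c}(\cdots)(\cdots)$ and from $R\wt R/\wh f$ cancel there.
\item Substituting everything into $C_{0,0}$ gives $\textnormal{Eq}$. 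The surviving $f_{20}f_{02}$ product in $C_{0,0}$ is harmless once $f_{20},f_{02}$ are known, so the concern in your last paragraph does not materialize.
\end{itemize}
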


\begin{proof}
To obtain~\eqref{eq:master-equation-Ising-quadrangulations-dij}, we start from the system of equations $[x^ky^\ell]Q=[x^ky^\ell]E$, $0 \leq k \leq d-2$, $0 \leq \ell \leq \wt d -2$, and eliminate the auxiliary functions $f_{ij}$ for $(i,j)\neq(0,0)$. Let us start by writing down the explicit expressions of the functions $R, \wt{R}, \wh{P}$ that appear in $Q$, and $P$ that appears in $E$. We have
\[
R(x) = -t_4(x^2f_{00} +xf_{10}+f_{20}) - t_2f_{00}, \qquad \widetilde{R}(y) =- \widetilde{t}_4(y^2f_{00} +yf_{01}+f_{02}) - \widetilde{t}_2f_{00}.
\]
Note that $f_{11}=\braket{\frac{1}{\omega - BA}BA}=\omega f_{00}-t$ (we could rederive this from the coefficient of $x^2y^2$ in $Q-E$, but it is quicker to write it immediately). Thus we have for $\wh{P}$:
\begin{align*}
\widehat{P}(x,y) =& (t_4x^2+t_2)(\widetilde{t}_4y^2+\widetilde{t}_2)f_{00} +t_4x(\widetilde{t}_4y^2+\widetilde{t}_2)f_{10} +\widetilde{t}_4y(t_4x^2+t_2)f_{01} +t_4\widetilde{t}_4xy(\omega f_{00} - t) \\
&+t_4(\widetilde{t}_4y^2+\widetilde{t}_2)f_{20} +\widetilde{t}_4(t_4x^2+t_2)f_{02} +t_4\widetilde{t}_4xf_{12} +t_4\widetilde{t}_4yf_{21} +t_4\widetilde{t}_4f_{22}.
\end{align*}
We express $P$ in terms of the functions $D_{ij}=\brak{B^iA^j}$ similarly to $\widehat{P}$ (note that $D_{11}$ cannot immediately be expressed in a simpler way, contrary to $f_{11}$):
\begin{align*}
P(x,y) =& x^2y^2t_4\widetilde{t}_4D_{00} +x^2yt_4\widetilde{t}_4D_{10} +xy^2t_4\widetilde{t}_4D_{01} +xyt_4\widetilde{t}_4D_{11}\\
& +x^2t_4(\widetilde{t}_4D_{02}+\widetilde{t}_2D_{00}) +y^2\widetilde{t}_4(t_4D_{20}+t_2D_{00}) \\
&+xt_4(\widetilde{t}_4D_{12}+\widetilde{t}_2D_{10}) +y\widetilde{t}_4(t_4D_{21}+t_2D_{01}) +(t_4\widetilde{t}_4D_{22}+t_4\widetilde{t}_2D_{20}+t_2\widetilde{t}_4D_{02}+t_2\widetilde{t}_2D_{00}).
\end{align*}
We then use the parity of the potentials, which implies that $D_{ij} = f_{ij} = 0$ if $i+j$ is odd, so that
\[
R(x) = -x^2t_4f_{00} -(t_4f_{20} + t_2f_{00}), \qquad \widetilde{R}(y) =- y^2\widetilde{t}_4f_{00} -(\widetilde{t}_4f_{02} + \widetilde{t}_2f_{00}),
\]
\begin{align*}
\widehat{P}(x,y) =& x^2y^2(t_4\widetilde{t}_4f_{00}) +xy(t_4\widetilde{t}_4(\omega f_{00}-t)) +x^2(t_4\widetilde{t}_2f_{00}+t_4\widetilde{t}_4f_{02}) +y^2(t_2\widetilde{t}_4f_{00}+t_4\widetilde{t}_4f_{20}) \\
&+(t_2\widetilde{t}_2f_{00}+t_4\widetilde{t}_2f_{20}+t_2\widetilde{t}_4f_{02}+t_4\widetilde{t}_4f_{22}),
\end{align*}
and
\begin{align*}
P(x,y) &= x^2y^2(t_4\widetilde{t}_4D_{00}) +xy(t_4\widetilde{t}_4D_{11}) +x^2t_4(\widetilde{t}_4D_{02}+\widetilde{t}_2D_{00}) +y^2\widetilde{t}_4(t_4D_{20}+t_2D_{00}) \\
&+(t_4\widetilde{t}_4D_{22}+t_4\widetilde{t}_2D_{20}+t_2\widetilde{t}_4D_{02}+t_2\widetilde{t}_2D_{00}).
\end{align*}
We then plug these into the expressions for the respective expressions~\eqref{eq:E-def} for $E$ and~\eqref{eq:Q} for $Q$. As detailed in the \textsc{SageMath} companion file, we then consider the system of equations on $\{\omega, \wh{f},f_{20},f_{02},f_{22}\}$ induced by stating that the coefficients in $x^iy^j$ of $Q-E$ must be zero. The equations corresponding to the coefficients in $x^2$, $y^2$ and $xy$ allow us to eliminate $f_{20},f_{02},f_{22}$. Writing down the constant coefficient in $Q(x,y)-E(x,y)$, we are left with a polynomial equation in $\widehat{f}$ and $\omega$, with coefficients in $\Q[c,t, t_i,\wt{t}_j, D_{20},D_{02},D_{11},D_{22}]$: $0=\textnormal{Eq}(\wh{f},\omega)$, where $\textnormal{Eq}(\wh{f},\omega)$ is given in the statement of the proposition.
\end{proof}

\begin{proof}[Proof of Proposition~\ref{prop:eqs-ising-q-non-sym-final}]
In equation~\eqref{eq:master-equation-Ising-quadrangulations-dij}, the functions $D_{ij}$ are for the moment non-explicit functions in $(c,t, t_k, \wt{t}_\ell)$. We now make use of the rational parametrization~\eqref{eq:parametrization-of-x(z)-and-y(z)-for-E=0} and the expression~\eqref{eq:param-t-ti-ising-quads} of $(t, t_k, \wt{t}_\ell)$ in terms of the parameters $(\alpha_m,\beta_n, \gamma, c)$, to write the $D_{ij}$ themselves as explicit rational functions (and actually, polynomials) in $(\alpha_m,\beta_n, \gamma, c)$, through a Lagrange inversion formula. Indeed, from~\eqref{eq:parametrization-of-x(z)-and-y(z)-for-E=0}, denoting $\bar{x}:=x^{-1}$ and $\bar{z}:=z^{-1}$, we have
\begin{equation}
\begin{aligned}
\bar{z} &=\bar{x}\left(\gamma+\alpha_1\bar{z}^2+\alpha_3\bar{z}^4\right)=:\bar{x}\Phi(\bar{z})\\
y(z) &=\gamma\bar{z}+\beta_1\bar{z}^{-1}+\beta_3\bar{z}^{-3}=:\Upsilon(\bar{z}),
\end{aligned}
\end{equation}
so that, by the Lagrange--Bürmann formula, for any non-negative $k$, the coefficient of ${x}^{-(k+1)}$ in $Y(x)$ is
\begin{equation}
[\bar{x}^{k+1}]Y(x)=\frac{1}{k+1}[u^k]\left(\Upsilon'(u)\Phi(u)^{k+1}\right).
\label{eq:lagrange-inv-x-z}
\end{equation}

From this, we directly obtain expressions for $D_{20}=[x^{-3}]Y(x)$ and (by symmetry) $D_{20}$:
\begin{equation}
\begin{aligned}
D_{20}&=c\left(\alpha_1\gamma^3-\alpha_3\beta_1\gamma^2-6\alpha_1\alpha_3\beta_3\gamma-\alpha_1^2\beta_1\gamma-\alpha_1^3\beta_3\right),\\
D_{02}&=c\left(\beta_1\gamma^3-\beta_3\alpha_1\gamma^2-6\beta_1\beta_3\alpha_3\gamma-\beta_1^2\alpha_1\gamma-\beta_1^3\alpha_3\right).
\end{aligned}
\label{eq:M02-M20-param-ratio}
\end{equation}

For $D_{11}$ and $D_{22}$, we make use of the splitting procedure to express them as combinations of functions $D_{i0}$:
\begin{equation*}
\begin{aligned}
D_{11}&=\brak{AB}=\frac{1}{c}\left(t^2+\wt{t}_2D_{20}+\wt{t}_4D_{40}\right),\\
D_{22}&=\brak{A^2B^2}=\frac{1}{c^2}\left(t^3+\wt t_4D_{20}^2+3t\wt t_2D_{20}+(\wt t_2^2+3t\wt t_4)D_{40}+2\wt t_2\wt t_4D_{60}+\wt t_4^2D_{80}\right),
\end{aligned}
\end{equation*}
so that, applying~\eqref{eq:lagrange-inv-x-z}, we get:
\begin{equation}
\begin{aligned}
D_{11}=&c\left(\gamma^4-\alpha_1\beta_1\gamma^2-5\alpha_3\beta_3\gamma^2-(\alpha_1^2\beta_3+\beta_1^2\alpha_3)\gamma+3\alpha_3^2\beta_3^2-\alpha_1\beta_1\alpha_3\beta_3\right),\\
D_{22}=&c\big(\gamma^6-6\alpha_3\beta_3\gamma^4-2(\alpha_1^2\beta_3+\beta_1^2\alpha_3)\gamma^3+(4\alpha_3^2\beta_3^2-11\alpha_1\beta_1\alpha_3\beta_3-\alpha_1^2\beta_1^2)\gamma^2\\
     &-(2\alpha_1^2\alpha_3\beta_3^2+2\beta_1^2\alpha_3^2\beta_3+\alpha_1^3\beta_1\beta_3+\alpha_1\beta_1^3\alpha_3)\gamma-\alpha_1^2\beta_1^2\alpha_3\beta_3+2\alpha_1\beta_1\alpha_3^2\beta_3^2-\alpha_3^3\beta_3^3\big).
\end{aligned}
\label{eq:M11-M22-param-ratio}
\end{equation}

Injecting~\eqref{eq:param-t-ti-ising-quads},~\eqref{eq:M02-M20-param-ratio} and~\eqref{eq:M11-M22-param-ratio} into~\eqref{eq:master-equation-Ising-quadrangulations-dij}, we get an explicit polynomial equation $\textnormal{Eq}_2(\omega,\wh{f})$ of degree 7 in $\omega$ and 3 in $\wh{f}$, with coefficients that are now in $\Q[c,\alpha_1,\beta_1,\alpha_3,\beta_3,\gamma]$. As it is a bit long, and straightforward to obtain from~\eqref{eq:master-equation-Ising-quadrangulations-dij} as explained, we do not write it here, but it is available in the \textsc{SageMath} companion file. 

The polynomial $\textnormal{Eq}_2(\omega,\wh{f})$ greatly simplifies when we perform the following transformation:
\begin{equation}
\left(\omega,\wh{f}\right) \mapsto \left(\zeta=\omega+\frac{\gamma^6}{\alpha_3\beta_3\omega}, \lambda=\wh{f}\omega\right).
\end{equation}

We first straightforwardly convert the equation from $(\omega,\wh{f})$ to $(\omega,\lambda=\wh{f}\omega)$, by taking the numerator $\textnormal{Eq}_3(\omega,\lambda)$ of $\textnormal{Eq}_2(\omega,\frac{\lambda}{\omega})$. This already decreases the degree in $\omega$ from 7 to 6. For $\zeta$, we take the resultant of $\textnormal{Eq}_3(\omega,\lambda)$ and $D=\alpha_3\beta_3\omega^2-\alpha_3\beta_3\zeta\omega+\gamma^6$. This resultant is actually the square of a polynomial in $\zeta$ and $\lambda$ with coefficients in $\Q[c,\alpha_1,\beta_1,\alpha_3,\beta_3,\gamma]$, that we take to be our equation $\textnormal{Eq}_4(\zeta,\lambda)$ for these new variables. It has degree 3 in both $\zeta$ and $\lambda$.
\end{proof}

We have checked using \textsc{Maple} that $\textnormal{Eq}_2(\omega,\wh{f})=0$ is of genus 0 (and a fortiori so is $\textnormal{Eq}_4(\zeta,\lambda)$)\footnote{We used \textsc{Maple} for this because the \texttt{genus} command of \textsc{SageMath} is only implemented for curves over number fields.}. However, even for the simpler polynomial $\textnormal{Eq}_4(\zeta,\lambda)$, it is not possible to obtain immediately a rational parametrization with a command such as \texttt{algcurves[parametrization]} in \textsc{Maple}, due to the presence of many (six) parameters. It should be possible to obtain an explicit rational parametrization in a ``bruteforce'' manner using interpolation over the parameters. We set aside this question for now, and focus on the special case of \emph{symmetric} Ising quadrangulations (corresponding to the specialization $t_i=\wt{t}_i$), where we obtain an explicit rational parametrization of $(\omega, \wh{f})$ with minimal use of computer algebra.

\subsection{The symmetric case}
\label{sec:rat-param-ising-quads}

We now derive the explicit rational parametrization for the case of symmetric Ising quadrangulations, announced in Theorem~\ref{thm:ratio-param-Ising-quad-sym}

\begin{proof}[Proof of Theorem~\ref{thm:ratio-param-Ising-quad-sym}]
All our parameters our now symmetric:
\[
\alpha_3 = \beta_3 , \quad \alpha_1 = \beta_1, \quad t_4 = \widetilde{t}_4 = \frac{c \alpha_3}{\gamma^3} , \quad t_2 = \widetilde{t}_2 = c\alpha_1\frac{\gamma - 3\alpha_3}{\gamma^2}, \quad t= c(\gamma^2-\alpha_1^2-3\alpha_3^2),
\]
so that the $D_{ij}$ are equal to
\begin{align*}
D_{00} &= c(\gamma^2 - \alpha_1^2 - 3\alpha_3^2) =t \nonumber, \\
D_{02} &= D_{20} = c(\gamma^3 \alpha_1  - \gamma^2 \alpha_1 \alpha_3 - \gamma \alpha_1^3 - 6 \gamma \alpha_3^2 \alpha_1 - \alpha_3 \alpha_1^3) \nonumber, \\
D_{22} &= c(\gamma^6 - 6\gamma^4\alpha_3^2 - 4\gamma^3\alpha_1^2\alpha_3 + \gamma^2(4\alpha_3^4 - \alpha_1^4 - 11\alpha_1^2\alpha_3^2) \nonumber - \gamma(\alpha_1^4\alpha_3 + 4\alpha_1^2\alpha_3^3) - \alpha_1^4\alpha_3^2 + 2\alpha_1^2\alpha_3^4 - \alpha_3^6).
\end{align*}
As detailed in the \textsc{SageMath} companion file, the polynomial $\textnormal{Eq}_{4,\textnormal{sym}}(\zeta,\lambda)$ then factorizes into
\[
\textnormal{Eq}_{4,\textnormal{sym}}(\zeta,\lambda)=\alpha_3(2\gamma^3+\alpha_3\zeta)\textnormal{Eq}_{\textnormal{sym}}(\zeta,\lambda),
\]
where $\textnormal{Eq}_{\textnormal{sym}}(\zeta,\lambda) \, \in \Q[c,\alpha_1,\alpha_3,\gamma][\zeta,\lambda]$ is equal to:
\begin{align}
\textnormal{Eq}_{\textnormal{sym}}(\zeta,\lambda)=&\zeta^2c\alpha_3^2\lambda^2+\zeta\Big(\alpha_3^4\lambda^3+(\alpha_1^2\alpha_3^4+3\alpha_3^6-\alpha_3^4\gamma^2-2\alpha_3^3\gamma^3)c\lambda^2 \nonumber\\ 
+&(3\alpha_3^8-\alpha_1^2\alpha_3^6-2\alpha_1^2\alpha_3^5\gamma+8\alpha_1^2\alpha_3^4\gamma^2-5\alpha_3^6\gamma^2-6\alpha_1^2\alpha_3^3\gamma³+\alpha_1^2\alpha_3^3\gamma^4+\alpha_3^4\gamma^4+\alpha_3^2\gamma^6)c^2\lambda \nonumber \\ \nonumber
+&(\alpha_1^4\alpha_3^6 - 2\alpha_1^2\alpha_3^8 + \alpha_3^1 - 4\alpha_1^4\alpha_3^5\gamma + 4\alpha_1^2\alpha_3^7\gamma + 6\alpha_1^4\alpha_3^4\gamma^2 + 2\alpha_1^2\alpha_3^6\gamma^2 - 4\alpha_3^8\gamma^2 - 4\alpha_1^4\alpha_3^3\gamma^3 \\ \nonumber
&- 8\alpha_1^2\alpha_3^5\gamma^3 + \alpha_1^4\alpha_3^2\gamma^4 + 2\alpha_1^2\alpha_3^4\gamma^4 + 6\alpha_3^6\gamma^4 + 4\alpha_1^2\alpha_3^3\gamma^5 - 2\alpha_1^2\alpha_3^2\gamma^6 - 4\alpha_3^4\gamma^6 + \alpha_3^2\gamma^8)c^3\Big)\\ \nonumber
-&2\alpha_3\gamma^3\lambda^3+(9\alpha_1^2\alpha_3^4\gamma^2-8\alpha_1^2\alpha_3^3\gamma^3-6\alpha_3^5\gamma^3+\alpha_1^2\alpha_3^2\gamma^4+2\alpha_3^3\gamma^5)c\lambda^2\\ \nonumber
+&(-6\alpha_1^4\alpha_3^5\gamma+14\alpha_1^4\alpha_3^4\gamma^2+18\alpha_1^2\alpha_3^6\gamma^2-10\alpha_1^4\alpha_3^3\gamma^3-28\alpha_1^2\alpha_3^5\gamma^3-6\alpha_3^7\gamma^3+2\alpha_1^4\alpha_3^2\gamma^4\\ \nonumber
&+12\alpha_1^2\alpha_3^3\gamma^5+10\alpha_3^5\gamma^5-2\alpha_1^2\alpha_3^2\gamma^6-2\alpha_3^3\gamma^7-2\alpha_3\gamma^9)c^2\lambda\\ \nonumber
+&(\alpha_1^6\alpha_3^6 - 4\alpha_1^6\alpha_3^5\gamma - 6\alpha_1^4\alpha_3^7\gamma + 6\alpha_1^6\alpha_3^4\gamma^2 + 20\alpha_1^4\alpha_3^6\gamma^2 + 9\alpha_1^2\alpha_3^8\gamma^2 - 4\alpha_1^6\alpha_3^3\gamma^3 \\ \nonumber
&- 20\alpha_1^4\alpha_3^5\gamma^3 - 20\alpha_1^2\alpha_3^7\gamma^3 - 2\alpha_3^9\gamma^3 + \alpha_1^6\alpha_3^2\gamma^4 - 4\alpha_1^2\alpha_3^6\gamma^4 + 10\alpha_1^4\alpha_3^3\gamma^5 + 36\alpha_1^2\alpha_3^5\gamma^5\\ \nonumber
& + 8\alpha_3^7\gamma^5 - 4\alpha_1^4\alpha_3^2\gamma^6 - 18\alpha_1^2\alpha_3^4\gamma^6 - 12\alpha_1^2\alpha_3^3\gamma^7 - 12\alpha_3^5\gamma^7 + 12\alpha_1^2\alpha_3^2\gamma^8 - 4\alpha_1^2\alpha_3\gamma^9\\
& + 8\alpha_3^3\gamma^9 + \alpha_1^2\gamma^{10} - 2\alpha_3\gamma^{11})c^3.
\label{eq:master-equation-Ising-quadrangulations-symmetric}
\end{align}

Since it is of degree $2$ in $\zeta$, it will correspond to a rational curve if it satisfies the so-called ``one-cut property''. In other words, we want to check whether, when considering~\eqref{eq:master-equation-Ising-quadrangulations-symmetric} as an equation in $\zeta$ with coefficients depending on $\lambda$, the associated discriminant $\Delta(\lambda)$ (which is a polynomial in $\lambda$), only has 2 simple zeros, and the rest of its zeros are of even degree. Indeed, this means that $\Delta(\lambda)$ can be written as
\[
\Delta(\lambda)=F(\lambda)^2(\lambda-a)(\lambda-b),
\]
where $F$ is a polynomial in $\lambda$. Then, we automatically get a joint rational parametrization of $\lambda$ and $\zeta$ satisfying~\eqref{eq:master-equation-Ising-quadrangulations-symmetric}, via Zhukovsky's transformation:
\[
\lambda(H):=\frac{a+b}{2}+\frac{a-b}{4}\left(H+\frac{1}{H}\right),
\]
since we then have (for a given choice of the square root)
\[
\sqrt{(\lambda-a)(\lambda-b)}=\frac{a-b}{4}\left(H-\frac{1}{H}\right),
\]
so that the two solutions $\zeta_{\pm}$ of~\eqref{eq:master-equation-Ising-quadrangulations-symmetric} are also rational functions of $H$. Of these two solutions, only one has an asymptotic behavior when $H\to\infty$ that is compatible with the initial definitions of $\lambda$ and $\zeta$ as Laurent series in $\omega$ (that satisfy $\lambda,\zeta \to \infty, \lambda/\zeta\to -c$ as $\omega \to \infty$), so that there is no ambiguity in writing an expression for $\zeta(H)$. Note that, since there is a symmetry $\lambda(\frac{1}{H})=\lambda(H)$, $\zeta_{\pm}(\frac{1}{H})=\zeta_{\mp}(H)$, the choice of looking at the behavior as $H\to\infty$ and not $H\to 0$ does not ultimately impact the formulae for $\lambda$ and $\zeta$. 

These verifications and computations are straightforward to do with a computer algebra system (see the \textsc{SageMath} companion file), and we obtain the following rational parametrization of the algebraic curve associated to $\textnormal{Eq}_{\textnormal{sym}}(\zeta,\lambda)$:
\begin{equation}
\begin{aligned}
\zeta_\textnormal{sym}(H) =& \frac{1}{(H\alpha_1 + \alpha_3 + \gamma)^2\alpha_3^2}\Big(H^3(\alpha_1^3\alpha_3^3 + \alpha_1^3\alpha_3^2\gamma)+2H^2\gamma\alpha_1^2\alpha_3^2 (3\alpha_3 + 2\gamma)\\
&+H\alpha_1\gamma^2(9\alpha_3^3+ 7\alpha_3^2\gamma - \alpha_3\gamma^2 + \gamma^3)+\alpha_3\gamma^3(2\alpha_3^2 + 4\alpha_3\gamma + 2\gamma^2)\Big)\\
\lambda_\textnormal{sym}(H) =& -\frac{c}{H}\left(H\alpha_1+\alpha_3+\gamma\right)\left(H(\alpha_3+\gamma)+\alpha_1\right).
\end{aligned}
\label{eq:parametrization-for-Ising-quadrangulations-symmetric-lambda-zeta}
\end{equation}
To get to a parametrization for our initial variables $(\omega,\widehat{f})$, note that the equation on $(H,\omega)$ given by $\zeta(\omega)=\zeta_\textnormal{sym}(H)$ is itself of degree 2 in $\omega$, and we check that it also satisfies the one-cut property (see the \textsc{SageMath} companion file for details), so that we find a joint parametrization $(H(h),\omega(h))$ for $H$ and $\omega$ satisfying this relation given by $\zeta$. Since $\widehat{f}$ is a rational function of $\omega$ and $\lambda$, and $\lambda(H(h))$ is also a rational function of $h$, this yields a joint rational parametrization $(\omega(h),\widehat{f}(h))$. As before, there is ultimately no importance in the choice of a solution to the quadratic equation in $\omega$, since the change $h\to 1/h$ leads to $\omega_{\pm}\to \omega_{\mp}$ and leaves $\lambda$ unchanged. We thus obtain the announced rational parametrization of $(\omega,\wh{f})$:
\begin{equation*}
\begin{aligned}
\omega_\textnormal{sym}(h) &= \frac{(\alpha_3 h +\gamma)^2\gamma^3h}{(\gamma h + \alpha_3)^2\alpha_3}\\
\widehat{f}_\textnormal{sym}(h) &=-c\frac{(\alpha_3 \gamma h^2 + h(\alpha_1^2 - 2\alpha_3\gamma) + \alpha_3 \gamma) (\gamma h +\alpha_3)^3}{(\alpha_3 h + \gamma) \gamma^4 (h - 1)^2h^2}.
\end{aligned}
\end{equation*}
\end{proof}

\begin{rem}
One can check that specializing $\alpha_1 = 0$ in~\eqref{eq:parametrization-for-Ising-quadrangulations-symmetric-f-omega} gives back the parametrization of~\cite{bouttier-carrance} for pure bicolored quadrangulations, with $h = z^4 = s$.
\end{rem}

As explained in Section~\ref{subsec:constell}, a remarkable feature of the parametrization obtained for $(\omega,\widehat{f})$ in the case of $m$-constellations in~\cite{bouttier-carrance}, is that $\omega$ and $\widehat{f}$ are written as rational functions of $x,Y(x)$, and inherit from this a parametrization in $z$: thus, $x,Y(x)$ and $\omega,\widehat{f}(\omega)$ can be written jointly as rational functions of a single variable $z$, while satisfying the \emph{kernel relation} $\widehat{f}\omega+cxY(x)=0$. 

Here, in the case of Ising quadrangulations, we could not obtain a parametrization of $(\omega,\widehat{f})$ so directly. Furthermore, the explicit solution of this new special case allows us to prove Corollary~\ref{cor:non-co-rat}:

\begin{proof}[Corollary~\ref{cor:non-co-rat}]
It obviously suffices to prove that the property does not hold for the case of symmetric Ising quadrangulations. Substituting $\omega_\textnormal{sym}(h),\widehat{f}_\textnormal{sym}(h), x(z),y(z)$ into the kernel relation gives a polynomial equation on $h$ and $z$. We show that the associated algebraic curve is non-rational. We actually obtain something even stronger. Consider the algebraic equation on $H$ and $Z$ given by $\lambda_{\textnormal{sym}}(H)=-c\xi(Z)$, with $\xi(Z)=x(z)y(z)_{|Z=z^2}$ (which is a rational function of $Z$). It can be written as
\[
cZ^2\left(H\alpha_1+\alpha_3+\gamma\right)\left(H(\alpha_3+\gamma)+\alpha_1\right)=H\left(\gamma Z^2+\alpha_1Z+\alpha_3\right)\left(\gamma+\alpha_1Z+\alpha_3Z\right).
\]
In particular, it is  a polynomial of degree 2 in $H$, whose discriminant has 4 distinct simple poles in $Z$ (see the \textsc{SageMath} companion file for details). Therefore, it cannot be of genus 0. \emph{A fortiori}, the relation $\lambda(H(h))=-c\xi(z^2)$ cannot correspond to a genus 0 curve. Since both rational parametrizations $(x(z),y(z))$ and $(\omega_{\textnormal{sym}}(h),\wh{f}_{\textnormal{sym}}(h))$ are proper, this implies that the two curves $(\omega,\wh{f})$, $(x,Y(x))$ do not admit a joint rational parametrization of the form (with some abuse of notation) $(x(S),y(S),\omega(S),\wh{f}(S))$ satisfying the relation $\omega(S)\wh{f}(S)=-cx(S)y(S)$. 
\end{proof}

The question of looking for a parametrization of $\omega,\wh{f}$ as rational functions of $x,y$ (i.e. ``constructing  $(\omega,\wh{f})$ on the curve $E(x,y)$'') without requiring the kernel relation is left open, and would be related to broader questions on the genus of variable-separated curves (see for instance~\cite{pakovich}). However, we believe that the kernel relation is a combinatorially significant relation. In particular, it also plays an important role in the slice decomposition of the alternating boundary condition~\cite{slices-alt}.

\section{Conclusion and perspectives}
\label{sec:ccl}

In this paper, we have established a new strategy to obtain an explicit algebraic master equation on the generating functions of general hypermaps with an alternating boundary, that proves to be much more efficient than the classical kernel method, at least in the case of Ising quadrangulations. This represents significant progress in the study of the alternating condition. There are still many open questions regarding this combinatorial problem.

First, we do not yet have an explicit formula for the master equation for general potentials. Given the concrete examples that we have at hand, it is natural to conjecture that the associated curve is always of genus 0. If it is true, it would be satisfying to have a general expression for a rational parametrization, like in the monochromatic case.

As mentioned in Section~\ref{subsec:constell}, we also expect the generating functions for general topologies to satisfy the topological recursion. If it is indeed the case, it would be interesting to understand in particular how the nice relation between the monochromatic and alternating conditions for $m$-constellations evolve in higher topologies, and if there is any relation between the two spectral curves in the general case. 

As a related question, it would be interesting to have a better understanding of the kernel relation, beyond its computational importance. Another computational step that we would like to make sense of more conceptually is the operation of taking the coefficients in $x$ and $y$ of the equation $Q=E$, as this is a crucial step in our strategy. It might also give some insight on the relation between the monochromatic and alternating conditions.

Finally, to further the combinatorial understanding of hypermaps,  it is also natural to want to obtain both the known and expected formulae by bijective arguments. We will address this in future work~\cite{slices-alt}, using the slice decomposition method.

\section*{Acknowledgements}

This work was initially supported by the ERC-SyG project, Recursive and Exact New Quantum Theory (ReNewQuantum) which received funding from the European Research Council (ERC) under the European Union's Horizon 2020 research and innovation programme under grant agreement No 810573.
In particular this project was born during the problem session of the Otranto summer school organized by ReNewQuantum. AC is currently supported by the Austrian Science Fund (FWF) grant 10.55776/F1002. The authors thank Jérémie Bouttier and Thomas Lejeune for insight and valuable discussions.

\appendix

\section{Application of the kernel method to general potentials}
\label{sec:appendix-alg-by-bmj}

In this appendix, we will detail how to write the Tutte equation~\eqref{eq:bouttier-carrance-eq-notation} obtained in~\cite{bouttier-carrance}, in a way that fits in the following general theorem:

\begin{thm}[Theorem 3 in~\cite{BoJe06}]
Let $Q(y_0,y_1,\dots,y_k,t,v)$ be a polynomial in $k+3$ indeterminates, with coefficients in a field $\K$. We consider the functional equation
\begin{equation}
F(u)\equiv F(t,u)=F_0(u)+tQ(F(u),\Delta F(u), \Delta^{(2)}F(u), \dots, \Delta^{(k)}F(u),t,u),
\label{eq:bm-j-thm}
\end{equation}
where $F_0(u) \in \K[u]$ is given explicitly and the operator $\Delta$ is the divided difference:
\[
\Delta F(u)=\frac{F(u)-F(0)}{u},
\]
and $\Delta^{(i)}$ is its $i$-th iterate. Then~\eqref{eq:bm-j-thm} has a unique solution $F(t,u) \in \K[u][[t]]$, and this formal power series is algebraic over $\K(t,u)$.
\label{thm:bmj}
\end{thm}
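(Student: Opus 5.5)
The proof splits into two parts. The first part is existence and uniqueness, obtained by extracting coefficients in $t$. The second is algebraicity, via the generalized kernel method in which one ``differentiates with respect to the catalytic variable''.

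For existence and uniqueness, write $F(t,u)=\sum_{n\ge 0}F_n(u)t^n$. Each iterated divided difference $\Delta^{(i)}$ maps $\K[u]$ to $\K[u]$, and $\Delta^{(i)}F(u)$ only involves $F$ coefficientwise in $t$. Hence the coefficient of $t^{n}$ on the right-hand side of~\eqref{eq:bm-j-thm} is a polynomial expression in $F_0,\dots,F_{n-1}$ and their divided differences. This determines $F_n(u)\in\K[u]$ recursively, which gives a unique solution in $\K[u][[t]]$.

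For algebraicity, first make the equation polynomial in a finite number of unknown series. Set $F_j:=[u^j]F(t,u)\in\K[[t]]$ for $0\le j\le k-1$. Since $\Delta^{(i)}F(u)=u^{-i}\bigl(F(u)-\sum_{j<i}F_ju^j\bigr)$, multiplying~\eqref{eq:bm-j-thm} by a suitable power $u^{N}$ turns it into
\[
P\bigl(F(u),F_0,\dots,F_{k-1},t,u\bigr)=0,
\]
with $P$ a polynomial over $\K$. Next, look for series $U=U(t)$, in fractional powers of $t$ and with $U(0)=0$, satisfying $\partial_1P(F(U),F_0,\dots,F_{k-1},t,U)=0$. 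Writing this condition as $u^N=t\cdot(\cdots)$ and applying a Newton-polygon argument, one expects exactly $k$ such series $U_1,\dots,U_k$, at least once the power of $u$ is chosen appropriately; the leading behaviour comes from $u^{k}$ against $t\,\partial_1Q\cdot(\text{stuff})$. Substitute each $U_i$ into $P=0$, and also into its $u$-derivative $\partial_1P\cdot F'(u)+\partial_uP=0$, which reduces to $\partial_uP=0$. Together with $\partial_1P=0$ this gives $3k$ polynomial equations in the $3k$ unknowns $U_i$, $X_i:=F(U_i)$ and $F_0,\dots,F_{k-1}$. If this system has only finitely many solutions, or more precisely if its Jacobian is nonzero at the true solution, then every unknown is algebraic over $\K(t)$. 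In particular all the $F_j$ are algebraic, and then so is $F(u)$, since it satisfies $P=0$ with algebraic coefficients.

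I expect the main obstacle to be the genericity assertions. These are that there are exactly $k$ distinct series $U_i$, and that the $3k\times 3k$ system is non-degenerate, so that it does not admit a positive-dimensional family of solutions containing the true one. For the first I would check the Newton polygon of $\partial_1P$ near $(u,t)=(0,0)$ directly. For the second I would follow the strategy of~\cite{BoJe06}. That strategy deforms the equation by a generic perturbation (for instance replacing $F_0(u)$ by $F_0(u)+\epsilon\,G(u)$ with generic $G$, or introducing an extra parameter) so that the $U_i$ are distinct and the Jacobian is visibly nonzero at lowest order in $t$. It then uses the fact that algebraicity over $\K(\epsilon,t,u)$ for generic $\epsilon$ specializes to $\epsilon=0$, because the solution is unique and depends polynomially on $\epsilon$ order by order in $t$. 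This is the analogue of the Jacobian computation used in Theorem~\ref{thm:algebraicity}, where the nonvanishing was read off from the dominant power of $c$.
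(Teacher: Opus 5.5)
The paper does not prove this statement. It is imported from \cite{BoJe06} and used as a black box in Appendix~\ref{sec:appendix-alg-by-bmj}, so your proposal has to stand on its own. Your existence and uniqueness argument is fine. The architecture is also right: the reduction to $P(F(u),F_0,\dots,F_{k-1},t,u)=0$, the series $U_i$ with $\partial_1P=0$, the extra equation $\partial_uP=0$, and the count of $3k$ equations in $3k$ unknowns are the Bousquet-Mélou--Jehanne scheme. However, the only non-formal content of the theorem is exactly the non-degeneracy you postpone. Saying ``follow the strategy of \cite{BoJe06}'' cites the result instead of proving it. Moreover, the concrete deformation you suggest would not close the gap.

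\emph{Why the suggested deformation fails.} For $u\neq0$, the condition $\partial_1P=0$ reads $u^k=t\sum_{i=0}^{k}u^{k-i}\,\frac{\partial Q}{\partial y_i}(F(u),\Delta F(u),\dots,\Delta^{(k)}F(u),t,u)$. So the number and shape of the roots are governed by $\partial Q/\partial y_k$ near $t=u=0$, not by $\partial_1Q$. Also, the spurious root $U=0$ created by the factor $u^N$ must be excluded. Perturbing the given polynomial $F_0(u)$ cannot repair a degenerate $\partial Q/\partial y_k$. For $Q=v\,y_k$, the condition becomes $u^{k-1}=t$, whatever the given polynomial is. This yields only $k-1$ series $U_i$ for the $k$ unknowns $F_0,\dots,F_{k-1}$, all of which occur, since $u\,\Delta^{(k)}F=\Delta^{(k-1)}F-F_{k-1}$.

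\emph{A deformation that works.} Deform $Q$ into $Q+\epsilon\,y_k$, with $\epsilon$ an indeterminate. Let $a\in\K$ be the value of $\partial Q/\partial y_k$ at $t=u=0$. Then $U_i^k\sim(a+\epsilon)t$ with $a+\epsilon\neq0$, so the $U_i$ are $k$ distinct non-zero series.

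\emph{The Jacobian still has to be computed.} Even after this deformation, ``visibly nonzero'' hides the actual work. Since $\partial_1P=\partial_uP=0$ at the true solution, the Jacobian is block-triangular:
$\det J=\det\bigl(\partial P/\partial F_j\,(X_i,\dots,U_i)\bigr)_{i,j}\cdot\prod_i\det\mathrm{Hess}_{(x_0,u)}P\,(X_i,\dots,U_i)$.
To lowest order, the first factor is $\bigl(t(a+\epsilon)\bigr)^k\prod_iU_i^{N-k}$ times the Vandermonde determinant of the $U_i$, so it is non-zero. For the Hessians it is not immediate. Their entries have different orders in $t$, and the contributions of $\partial^2Q/\partial y_k^2$ cancel only after an explicit expansion; the leading term turns out to be $-k^2U_i^{2N-2}$.

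\emph{The passage $\epsilon\to0$.} This step needs a justification too. Take a nonzero annihilating polynomial of $F_\epsilon\in\K[\epsilon,u][[t]]$ over $\K[\epsilon,t,u]$. Divide it by the largest power of $\epsilon$ dividing it, then set $\epsilon=0$. The result is a nonzero polynomial that vanishes at $F$ and is non-constant in the unknown.
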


Here, $u$ is the catalytic variable of the equation, which is the role played by $x$ in~\eqref{eq:bouttier-carrance-eq-notation}, and the main variable ($t$ in the theorem) would be $w$. The field $\K$ would be $\QQ(t,t_i,\wt{t}_j)$. The first obstruction in applying this theorem to~\eqref{eq:bouttier-carrance-eq-notation} is that it contains not only polynomial (or rational) functions in $x$, but also the function $Y(x)$, which is a formal power series. However, we can make use of the rational parametrization~\eqref{eq:xy-param}, to switch from $x$ to $\bar{z}=z^{-1}$ as a catalytic variable. Then~\eqref{eq:bouttier-carrance-eq-notation} writes, for $\mathcal{M}(\bar{z},w)=\wh M(x(z),w)$:
\begin{align}
\mathcal{M}(\bar{z},w)=&\mathcal{M}(\bar{z},w)(A(w)+wx(z)y(z))+wx(z)(V'(x(z))+y(z))y(z) \nonumber\\
&-w\sum_{2\leq i \leq d}t_i\Big[x(z)^i(\mathcal{M}(\bar{z},w)+V'(x(z))+Y(x(z))) \Big]_{x^{\geq 0}}.
\label{eq:tutte-m-z}
\end{align}
Note that $\mathcal{M}_0(\bar{z})=\mathcal{M}(\bar{z},0)=0$ is clearly explicit, so that~\eqref{eq:tutte-m-z} can be written as
\begin{align*}
\mathcal{M}(\bar{z},w)=&\mathcal{M}_0(\bar{z}) + w\Big(\frac{A(w)}{w}+x(z)y(z))\mathcal{M}(\bar{z},w)+(x(z)y(z))(V'(x)+y(z)) \\
& + \sum_{2\leq i \leq d}t_i\Big[x(z)^i(\mathcal{M}(\bar{z},w)+V'(x(z))+Y(x(z))) \Big]_{x^{\geq 0}} \Big).
\end{align*}
Everything in factor of $w$ in the right-hand side is an explicit rational function in $\bar{z}$ and $\mathcal{M}(\bar{z},w)$, except $A(w)/w=[\bar{x}](M(x,w))+t$, and 
\[
\sum_{2\leq i \leq d}t_i\Big[x(z)^i\mathcal{M}(\bar{z},w)\Big]_{x^{\geq 0}}
\]
which are explicit combinations of the coefficients of $\bar{x}, \bar{x}^2, \dots, \bar{x}^{d}$, in the power series $\wh M(x,w)=:M(\bar{x}) \, \in \Q[c,t,t_i,\wt{t}_j][[\bar{x},w]]$. They are first easily expressed in terms of the divided differences in $\bar{x}$. Indeed, we have
\[
\Delta^{(i)}_{\bar{x}}M=\frac{M(\bar{x})-M(0)-\bar{x}M'(0)- \dots -\bar{x}^{i-1}/(i-1)! M^{i-1}(0) }{\bar{x}^i},
\]
so that
\[
[\bar{x}^i]M=\frac{1}{i!}M^{(i)}(0)=\Delta^{(i)}_{\bar{x}}M-\bar{x}\Delta^{(i+1)}_{\bar{x}}M.
\]
Now, we claim that the divided differences in $\bar{x}$ can be written in terms of the ones in $\bar{z}$, in the following way, for any $i\geq1$:
\begin{equation}
\Delta_{\bar{x}}^{(i)}M(\bar{x}(\bar{z}))=\Phi(\bar{z})\left(\sum_{1\leq j \leq i}p_{i,j}(\bar{z})\Delta^{(j)}_{\bar{z}}\mathcal{M} \right),
\label{eq:divided-diff-x-to-z}
\end{equation}
where $p_{i,j}(\bar{z})$ are explicit polynomials in $\Q[c,\gamma,\alpha_k,\beta_l][\bar{z}]$. Indeed, as $\bar{x}(\bar{z})=\bar{z}/\Phi(\bar{z})$, where $\Phi(\bar{z})=\gamma+\sum_{0\leq k \leq \wt d -1}\alpha_k\bar{z}^k$, we have
\[
\Delta_{\bar{x}}M(\bar{x}(\bar{z}))=\frac{M(\bar{x}(\bar{z}))-M(0)}{\bar{x}(\bar{z})}=\Phi(\bar{z})\cdot\frac{\mathcal{M}(\bar{z})-\mathcal{M}(\bar{z}=0)}{\bar{z}}=\Phi(\bar{z})\Delta_{\bar{z}}\mathcal{M},
\]
since $\bar{x}(\bar{z}=0)=0$. The statement for general $i$ follows by a straightforward induction, applying the Leibniz rule for divided differences:
\[
\Delta_x(f(x)g(x))=f(0)\cdot\Delta_xg(x) + \Delta_xf(x)\cdot g(x).
\]

Equation~\eqref{eq:divided-diff-x-to-z} implies that the coefficient $[\bar{x}^i]M$ is itself an explicit polynomial in the variables $\bar{z}, \mathcal{M}, \Delta_{\bar{z}}\mathcal{M}, \dots \Delta^{(i+1)}_{\bar{z}}\mathcal{M}$ with coefficients in $\Q[c,\gamma,\alpha_k,\beta_\ell]$, so that (up to multiplication by a power of $\bar{z}$),~\eqref{eq:tutte-m-z} can indeed by written in the form of~\eqref{eq:bm-j-thm}, and Theorem~\ref{thm:bmj} applies.\\

Let us end this section with a sketch of the strategy of the \emph{kernel method}, applied to general potentials $V, \wt V$, following the exposition given in~\cite[Section 3.1]{BoJe06}. We compare it with our ``$Q=E$'' method in Section~\ref{sec: algebraicity-2-strats}.

The kernel method consists in looking for formal power series $\bar{Z}=\bar{Z}(\omega) \in \QQ[c,\gamma,\alpha_k,\beta_\ell][\omega]$ that satisfy 
\begin{equation}
\mathcal{K}(\bar{Z},\omega)=0,
\label{eq:kernel-z-bar}
\end{equation}
where $\mathcal{K}(\bar{z},\omega)=K(x(z),\omega)$. Indeed, for such a solution, equation~\eqref{eq:tutte-m-z} implies
\[
0=\mathcal{K}(\bar{Z}_i,w)\mathcal{M}(\bar{Z}_i,w)=\mathcal{R}(\bar{Z}_i, M_1, \dots, M_d, w),
\]
with $M_i=[\bar{x}^{i}]M(\bar{x})$ and $\mathcal{R}(\bar{z}, M_1, \dots, M_d, w)=\wh{R}(x(z),w)$, where the dependence on the $M_i$ is now emphasized. If we have $d$ such solutions $\bar{Z}_1, \dots, \bar{Z}_d$, counted with multiplicity, then this gives a system of $d$ equations on the $d$ unknowns $M_1, \dots, M_d$ (if a solution $\bar{Z}_i$ has non-trivial multiplicity, one needs to write the equations associated to the derivatives of $\mathcal{R}$ at $\bar{Z}_i$). We can then solve this system and insert the solutions in~\eqref{eq:tutte-m-z}, yielding an explicit polynomial equation on $\mathcal{M}(\bar{z},w)$. Theorem~\ref{thm:bmj} ensures that there is a unique solution to the polynomial system on $M_1,\dots,M_d$, as these then determine $\mathcal{M}(\bar{z},w)$.

It remains to check that we have $d$ solutions to~\eqref{eq:kernel-z-bar}. Since we have:
\begin{align*}
x(z)y(z)&=\Bigg(\gamma z +\sum_{0 \leq k \leq \wt d -1}\alpha_kz^{-k}\Bigg)\Bigg(\gamma z^{-1} +\sum_{0 \leq \ell \leq d -1}\beta_\ell z^{\ell}\Bigg)\\
&=\Bigg(\gamma +\sum_{0 \leq k \leq \wt d -1}\alpha_kz^{-k-1}\Bigg)\Bigg(\gamma +\sum_{0 \leq \ell \leq d -1}\beta_\ell z^{\ell+1}\Bigg),
\end{align*}
to get to a polynomial equation on $\bar{z}$ from $\mathcal{K}(\bar{z},w)$, we have to multiply it by $\bar{z}^d$, yielding the following power series in $w$ with polynomial coefficients in $\bar{z}$:
\[
\Psi(w,\bar{z})=\bar{z}^d(1-A(w))+w\Bigg(\gamma +\sum_{0 \leq k \leq \wt d -1}\alpha_k\bar{z}^{k+1}\Bigg)\Bigg(\gamma \bar{z}^d +\sum_{0 \leq \ell \leq d -1}\beta_\ell \bar{z}^{d-\ell-1}\Bigg).
\]
Since the constant coefficient in $w$ is $\Psi(0,\bar{z})= \bar{z}^d$,~\cite[Theorem 2]{BoJe06} implies that there are exactly $d$ solutions to $K(x(\bar{Z}),w)=0$ that are fractional power series in $w$, counted with multiplicity.

\newpage
\printbibliography

\end{document}